\definecolor{mediumspringgreen}{rgb}{0.0, 0.98039215, 0.60392156}
\def\visible<#1>{}  % beamer command not needed here
\newcommand\ifpdf
\DeclareMathOperator    \conv           {conv}
\DeclareMathOperator    \intr                   {int}
\DeclareMathOperator    \lcm                    {lcm}
  \newcommand\bx{\bar x}\newcommand\by{\bar y}\newcommand\bz{\bar z}
\newcommand{\old}[1]{{}}
\newcommand{\bb}{\mathbb}
\newcommand{\R}{\bb R}
\newcommand{\Q}{\bb Q}
\newcommand{\Z}{\bb Z}
\newcommand{\N}{\bb N}
\newcommand\st{\mid}
\renewcommand{\P}{\mathcal{P}}
\def\st{\mid}
\newenvironment{psmallmatrixbig}{\bigl(\smallmatrix}{\endsmallmatrix\bigr)}
\newcommand\InlineFrac[2]{#1/#2}  % only works for simple arguments
\newcommand\ColVec[3][\relax]% Optional argument is denominator.
\let\frac=\InlineFrac\begin{psmallmatrixbig}#2\vphantom{/}\\#3\vphantom{/}\end{psmallmatrixbig}\egroup
\let\frac=\InlineFrac\begin{psmallmatrixbig}\ifx#200\else#2/#1\fi\\\ifx#300\else#3/#1\fi\end{psmallmatrixbig}\egroup
\newcommand\CaseM{$\wedge\kern-0.15em\wedge$}
\newcommand\CaseW{$\vee\kern-0.15em\vee$}
\newtheorem{theorem}{Theorem}[section]
\newcommand\MkNewTheorem[2]{%
  \newtheorem{#1}{#2}[section]
  \expandafter\def\csname c@#1\endcsname{\c@theorem}
  \expandafter\def\csname p@#1\endcsname{\p@theorem}
  \expandafter\def\csname the#1\endcsname{\thetheorem}
  \expandafter\def\csname #1name\endcsname{#2}
}
\theoremstyle{definition}
\let\OurMathBbAux=\mathbb
\DeclareRobustCommand\OurMathBb{\OurMathBbAux}
\let\mathbb=\OurMathBb
\let\bfseries=\undefined
\DeclareRobustCommand\bfseries
\let\OurMathBbAux=\mathbf}
\def\@thm#1#2#3{%
  \ifhmode\unskip\unskip\par\fi
  \normalfont
  \trivlist
  \let\thmheadnl\relax
  \let\thm@swap\@gobble
  \thm@notefont{\fontseries\mddefault\upshape\unboldmath}%   %%% <--- Added \unboldmath
  \thm@headpunct{.}% add period after heading
  \thm@headsep 5\p@ plus\p@ minus\p@\relax
  \thm@space@setup
  #1% style overrides
  \@topsep \thm@preskip               % used by thm head
  \@topsepadd \thm@postskip           % used by \@endparenv
  \def\@tempa{#2}\ifx\@empty\@tempa
    \def\@tempa{\@oparg{\@begintheorem{#3}{}}[]}%
  \else
    \refstepcounter{#2}%
    \def\@tempa{\@oparg{\@begintheorem{#3}{\csname the#2\endcsname}}[]}%
  \fi
  \@tempa
}
\renewcommand{\pod}[1]%% a helper command used in \pmod.
{\allowbreak\mathchoice{\mkern18mu}{\mkern8mu}{\mkern8mu}{\mkern8mu}(#1)}
\let\epsilon=\varepsilon
\let\Myunderscore=\textunderscore   %%%% Better with textsf
  \def\Myunderscore{\textunderscore}%
\newcommand\underscore{\Myunderscore\allowbreak}
\DeclareRobustCommand\sage[1]{\textsf{\upshape #1}}
\DeclareRobustCommand\sagefunc[1]{\pgfkeys{/sagefunc/#1}}
\DeclareRobustCommand\sagefuncgraph[1]{\raisebox{-0.08ex}{\includegraphics[height=2ex,width=2.5em]{funcgraphs/#1}}}
\DeclareRobustCommand\sagefuncwithgraph[1]{\sagefunc{#1} \sagefuncgraph{#1}}
\newcommand\DIFFPROTECT[1]{#1}
\title{All Cyclic Group Facets Inject}
\dedicatory{Dedicated to Professor Ellis L. Johnson on the occasion of his
  eightieth birthday} % Jul 26, 1938 according to wikipedia
\thanks{The authors gratefully acknowledge partial support from the National Science
  Foundation through grant DMS-1320051, awarded to M.~K\"oppe.}
\author{Matthias K\"oppe}
\address{Matthias K\"oppe: Dept.\ of Mathematics, University of California, Davis}
\email{mkoeppe@math.ucdavis.edu}
\author{Yuan Zhou}
\address{Yuan Zhou: Dept.\ of Mathematics, University of Kentucky}
\email{yuan.zhou@uky.edu}
\date{$\relax$Revision: 3220 $ - \ $Date: 2019-04-16 21:10:37 -0700 (Tue, 16 Apr 2019) $ $\!\!\!}
\newcommand\Figure[2][\relax]{%
  \begin{figure}[h!]
    \includegraphics[width=.8\textwidth]{#2}
    \caption{\ifx#1\relax#2\else#1\fi}
  \end{figure}
}
\begin{document}
 \newcommand{\tgreen}[1]{\textsf{\textcolor {ForestGreen} {#1}}}
 \newcommand{\tred}[1]{\texttt{\textcolor {red} {#1}}}
 \newcommand{\tblue}[1]{\textcolor {blue} {#1}}

\begin{abstract}
  We give a variant of Basu--Hildebrand--Molinaro's approximation theorem for
  continuous minimal valid functions for Gomory--Johnson's infinite group
  problem by piecewise linear two-slope extreme functions [\emph{Minimal
    cut-generating functions are nearly extreme}, IPCO 2016].  Our theorem is
  for piecewise linear minimal valid functions that have only rational
  breakpoints (in $1/q\,\mathbb{Z}$ for some $q\in \mathbb{N}$) and that take
  rational values at the breakpoints.  In contrast to Basu et al.'s
  construction, our construction preserves all function values on
  $1/q\,\mathbb{Z}$.  As a corollary, we obtain that every extreme function
  for the finite group problem on $1/q\,\mathbb{Z}$ is the restriction of a
  continuous piecewise linear two-slope extreme function for the infinite
  group problem with breakpoints on a refinement $1/(Mq)\,\mathbb{Z}$ for some
  $M\in \mathbb{N}$.  In combination with Gomory's master theorem [\emph{Some
  Polyhedra related to Combinatorial Problems}, Lin.\@ Alg.\@ Appl.\@ 2 (1969),
  451--558], this shows that the infinite group problem is the correct master
  problem for facets (extreme functions) of 1-row group relaxations.
\end{abstract}
\maketitle

%\clearpage
%% {\footnotesize
%% \tableofcontents}

\section{Introduction}

Gomory introduced the finite group relaxations of integer programming problems
in his seminal paper~\cite{gom}, expanding upon \cite{MR0182454}.
%% Following the notation in~\cite{igp_survey}%% well not quite, we're
%%                                 %% quotienting out the subgroup so that we can
%%                                 %% talk more easily about the homomorphism
%%                                 %% theorem...
Let $G$ be an abelian group, written additively (as a $\Z$-module).  Finite
groups $G$ arise concretely as $B^{-1}\Z^k/\Z^k$, 
where $B$ is a basis matrix in the simplex method, applied to the continuous
relaxation of a pure integer program with all-integer data.  For concreteness
and simplicity, throughout this paper we consider all groups $G$ as subgroups
of the infinite group~$\Q^k/\Z^k \subset \R^k/\Z^k$ for some~$k$.  %% (By using $\Q^k/\Z^k$ instead
%% of $\R^k/\Z^k$, we are sidestepping subtle issues that arise with irrational
%% vectors% when irrational % vectors are allowed in $P$
%% ; see
%% \cite{basu2016structure,koeppe-zhou:crazy-perturbation,koeppe-zhou:discontinuous-facets}
%% for discussions of this case.)
For example, the cyclic group $C_q$ is
realized as $\frac1q\Z/\Z \subset \Q/\Z \subset \R/\Z$.  

Let $P \subseteq G$ be a finite subset and let $f$ be an element of
$G\setminus\{0\}$.  Consider the set of functions $y \colon P \to \Z_+$
satisfying the constraint (``group equation''):
\begin{equation}
  %\tag{IR} 
  \begin{aligned}
    &\sum_{p \in P} y(p) \, p = f %%\\
    %% &y(p) \in \mathbb{Z}_+ \ \ \textrm{for all $r \in P$}
    .\label{eq:group-eqn}
  \end{aligned}
\end{equation}
(The summation takes place in $G$ and hence the equation is ``modulo 1.'')
Denote by
$R_f(P) % = R_f(P, \Z^k)
$ the convex hull of all solutions $y \in \Z_+^P$
to~\eqref{eq:group-eqn}.  %% Basu 
%% et al. \cite[Theorem 4.3]{basu2016structure} showed that
%% If $P$ is a finite
%% subset of $\Q^k/\Z^k$,
The set $R_f(P)$, if nonempty, is a polyhedron in $\R_+^P$ of \emph{blocking
  type}, i.e., its recession cone is $\R_+^P$; see, for example, \cite[section
6.1]{ccz-ipbook}.  It is known as the \emph{corner polyhedron}.  If $G$ is a
finite group and $P = G$, then one speaks of a \emph{(finite) master group relaxation}
and a \emph{master corner polyhedron}; we will comment on the meaning of the
word ``master'' in these notions shortly.

The question arose how to make effective use of the group relaxation in
solvers.  An early emphasis lay on the primal aspects of the problem, such as
the use of dynamic programming to generate ``paths'' (solutions); see the
survey \cite[section 19.3]{Richard-Dey-2010:50-year-survey} and also Gomory's
essay \cite{gomory2007atoms}.  %%Gomory's dream.....
Trivially, a solution $y\in\Z_+^P$ of $R_f(P)$ injects %$\sage{zero\_extension}(y)$ 
into larger problems, in particular master problems $R_f(G)$, by setting
\begin{equation}
  \label{eq:primal-injection}
  y(p) = 0 \quad\text{for} \quad p \notin P.
\end{equation}

The renewed interest in the group approach in recent years, however, has
almost exclusively focused on the dual aspects, i.e., on generating valid
inequalities, which can be applied directly to the original integer program.
This is also the viewpoint of the present paper.  For an overview we refer to
the surveys \cite[sections 19.4--19.6]{Richard-Dey-2010:50-year-survey} and
\cite{igp_survey,igp_survey_part_2}.  For recent developments we refer to
\cite{DiSumma-2018:piecewise-smooth-piecewise-linear,basu2017optimal,hildebrand-koeppe-zhou:algo-paper-abstract-ipco}. 

\subsection{Valid functions}

Because $R_f(P)$ is a polyhedron of blocking type, all non-trivial valid
inequalities can be normalized to the form $\sum_{p \in P} \pi(p) y(p) \geq 1$
for some non-negative function $\pi\colon P\to \R_+$.  Such functions~$\pi$
are called \emph{valid functions}.  A valid function $\pi$ is said to be
\emph{minimal} if there is no other valid function $\pi^\dagger \neq \pi$ such that
$\pi^\dagger \leq \pi$ pointwise.  The set of minimal valid functions for arbitrary problems
$R_f(P)$ can have a complicated structure; but for \emph{master} problems
$R_f(G)$, Gomory \cite{gom} gave the following important characterization: 
\begin{subequations}\label{eq:minimal}
  \begin{align}
    &\pi(x) \geq 0 &&\text{for } x \in G, \label{eq:minimal:nonneg}\\
    &\pi(0)=0, \ \pi(f)=1, \label{eq:minimal:01}\\
    &\Delta\pi(x,y) \geq 0  %% \pi(x) + \pi(y) \geq \pi(x+y)
    &&  \text{for } x,y \in G
                   && \text{(subadditivity),}\label{eq:minimal:subadd}\\
    &\Delta\pi(x, f-x) = 0 %% \pi(x) + \pi(f-x) = 1
    && \text{for } x\in G 
                   && \text{(symmetry condition),} \label{eq:minimal:symm}
  \end{align}
\end{subequations}
where $\Delta\pi(x,y) = \pi(x)+\pi(y) -\pi(x+y)$ is the \emph{subadditivity slack} function.
The functions~$\pi$ giving rise to (non-trivial) facet-defining inequalities
are referred to as \emph{facets}.  They are the functions that are
\emph{extreme} among the minimal valid functions, i.e., they satisfy
\begin{equation}
   \begin{aligned}
     \text{if $\pi^+$ and $\pi^-$ are minimal and $\pi = \tfrac12 (\pi^++\pi^-)$}\\
     \text{then $\pi = \pi^+ = \pi^-$}. 
     \label{eq:facet-definition-like-extreme-function}
   \end{aligned}
\end{equation}
In the present paper, we take the classic viewpoint of considering the
facets (extreme functions) the ``important'' valid functions.

%% FIXME: Make clear that this is for finite group.

\subsection{Injections of valid functions into larger finite group problems by
homomorphism}

%% FIXME: Does it make sense to introduce primal and dual injections?
The group relaxations can be seen as models that capture a nontrivial amount
of the strength of integer programs, % retaining features such as the
% superadditive duality
but which are sufficiently structured to allow us to apply some analysis.  For
example, the master problems admit additional symmetries, not found in the
original integer programs, in the form of group automorphisms, which apply to
the solutions and also to the valid inequalities.  Gomory \cite{gom} exploited
this fact by enumerating facets of the master corner polyhedra up to
automorphisms.   

%Perhaps more important than automorphisms....
In addition, via pullbacks by
group homomorphisms, the set of valid inequalities for a master problem
for a group~$H$ injects into the set of valid inequalities for master problems
for groups~$G$, where $H$ is a homomorphic image of~$G$.  This hierarchy of injections preserves
facetness.
\def\Homo{A}
\begin{theorem}[{homomorphism theorem \cite[Theorem 19]{gom}}]
  \label{thm:homomorphism}
  \begin{enumerate}[\rm(a)]
  \item
    Let $\Homo\colon G \to H$ be a homomorphism onto $H$ with kernel $K$ and
    $f \notin K$.  Let $\check\pi\colon H\to\R$ be a facet %% function so that
    %% $\sum_{p\in G} \check\pi(p) y(p) \geq 1$ is a facet-defining inequality
    of $R_{\Homo f}(H)$.  Define the pullback $\pi\colon G\to \R$ of $\check\pi$ by
    $\Homo$ as $\pi(g) = \check\pi(\Homo g)$.
    Then %% $\sum_{p\in G} \pi(p) y(p) \geq 1$
    %% is a facet-defining inequality
    $\pi$ is a facet of $R_{f}(G)$.
  \item In the other direction, if a function $\pi\colon G\to \R$ defines a
    facet of~$R_f(G)$ and is constant on the cosets of~$K$, then~$\pi(g)$
    factors through the canonical homomorphism $\Homo\colon G\to G/K$,
    $\pi(g) = \check\pi(\Homo g)$, and $\check\pi$ defines a facet of~$R_f(G/K)$.
    % \cite[Theorem 20]{gom} - pi(g) = 0 -> is result of homom.
  \end{enumerate}
\end{theorem}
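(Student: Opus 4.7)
The plan for part (a) is to show that the pullback $\pi = \check\pi \circ \Homo$ inherits validity, minimality, and extremality from $\check\pi$ through $\Homo$. \emph{Validity}: given $y\colon G\to\Z_+$ with $\sum_g y(g)\, g = f$ in $G$, applying $\Homo$ gives $\sum_g y(g)\,\Homo g = \Homo f$ in $H$; regrouping by fibers of $\Homo$ produces a feasible $y'\colon H \to \Z_+$ for $R_{\Homo f}(H)$, and $\sum_g \pi(g) y(g) = \sum_h \check\pi(h) y'(h) \geq 1$. \emph{Minimality}: the four conditions in \eqref{eq:minimal} for $\pi$ follow term-by-term from those for $\check\pi$, using that $\Homo$ is a homomorphism with $\Homo(0) = 0$ and $\Homo(f-x) = \Homo f - \Homo x$.

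The heart of part (a) is \emph{extremality}. Suppose $\pi = \tfrac12(\pi^+ + \pi^-)$ with $\pi^\pm$ minimal valid on $G$. The key lemma to establish is that $\pi^\pm$ are both constant on the cosets of $K$. For this, note first that $\pi(k) = \check\pi(0) = 0$ for all $k\in K$, hence $\pi^\pm(k) = 0$ by nonnegativity. Then, for any $g\in G$ and $k \in K$, subadditivity gives $\pi^\pm(g+k)\le\pi^\pm(g)+\pi^\pm(k)=\pi^\pm(g)$ and similarly $\pi^\pm(g)\le\pi^\pm(g+k)+\pi^\pm(-k)=\pi^\pm(g+k)$, so equality holds. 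Therefore $\pi^\pm$ factor through $\Homo$ as $\pi^\pm(g) = \check\pi^\pm(\Homo g)$. A routine check (analogous to the minimality step above, combined with the injection-of-solutions argument from the validity step, run in reverse) shows that $\check\pi^\pm$ are minimal valid for $R_{\Homo f}(H)$, and clearly $\check\pi = \tfrac12(\check\pi^+ + \check\pi^-)$. Since $\check\pi$ is a facet, $\check\pi^+ = \check\pi^- = \check\pi$, hence $\pi^+ = \pi^- = \pi$, proving $\pi$ is a facet.

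Part (b) is the converse and is easier once the machinery of (a) is in place. By hypothesis $\pi$ is constant on the cosets of $K$, so $\check\pi\colon G/K \to \R$ with $\pi = \check\pi\circ \Homo$ is well-defined. Minimality of $\check\pi$ is verified by the same pointwise check as in part (a); note that $f\notin K$ ensures $\Homo f \neq 0$, so the normalization $\check\pi(\Homo f)=1$ is the correct one for $R_{\Homo f}(G/K)$. To check extremality, if $\check\pi = \tfrac12(\check\pi^+ + \check\pi^-)$ with $\check\pi^\pm$ minimal valid on $G/K$, take pullbacks $\pi^\pm = \check\pi^\pm \circ \Homo$; by the validity and minimality-preservation arguments of part (a), these are minimal valid on $G$, and $\pi = \tfrac12(\pi^+ + \pi^-)$. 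Extremality of $\pi$ forces $\pi^+ = \pi^- = \pi$, and hence $\check\pi^+ = \check\pi^- = \check\pi$.

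The main obstacle is the $K$-invariance argument for $\pi^\pm$ in part (a): it is the one place where we must cross from a pointwise equality $\pi(k)=0$ on the kernel up to a structural statement about the whole of $\pi^\pm$. All other steps are standard term-by-term transfers of the minimality axioms \eqref{eq:minimal:nonneg}--\eqref{eq:minimal:symm} along the homomorphism $\Homo$, together with the natural bijection between feasible solutions of $R_f(G)$ (modulo $K$) and feasible solutions of $R_{\Homo f}(H)$.
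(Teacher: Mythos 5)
The paper does not prove \autoref{thm:homomorphism}; it is quoted from Gomory's paper \cite[Theorem~19]{gom} without an in-text argument, so there is no in-paper proof to compare against. Evaluated on its own, your argument is the natural one and is essentially correct. The crux of part~(a) is exactly where you say it is: the $K$-invariance of any minimal $\pi^\pm$ with $\pi^\pm \le 2\pi$, deduced from $\pi^\pm(k)=\pi^\pm(-k)=0$ for $k\in K$ together with two applications of subadditivity, and you handle this cleanly.

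Two small remarks. First, for the ``routine check'' that $\check\pi^\pm$ (and, in part~(b), $\check\pi$) are minimal valid, it is cleanest to invoke Gomory's characterization \eqref{eq:minimal} directly: on a finite master problem, a function is minimal valid if and only if it satisfies \eqref{eq:minimal:nonneg}--\eqref{eq:minimal:symm}, and since $\Homo$ is surjective these conditions transfer pointwise from the $K$-invariant $\pi^\pm$ to $\check\pi^\pm$; no separate validity argument is then needed. Second, if you do wish to run the ``injection-of-solutions argument in reverse'' to verify validity directly, the step deserves to be spelled out: for a feasible $y'$ for $R_{\Homo f}(H)$ and arbitrary preimages $g_h\in\Homo^{-1}(h)$, one only gets $\sum_h y'(h)\,g_h \in f+K$. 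Because $f\notin K$ forces some $y'(h_0)\ge 1$, one may redistribute a single unit of $y'(h_0)$ from $g_{h_0}$ to $g_{h_0}-k$ within the same fiber; this produces a feasible $y$ for $R_f(G)$ whose fiber-aggregation is $y'$, and $K$-invariance of $\pi^\pm$ gives $\sum_g \pi^\pm(g)y(g)=\sum_h\check\pi^\pm(h)y'(h)\ge 1$. This is the one place $f\notin K$ is actually used beyond making $\Homo f\ne 0$ a legal right-hand side, and it is worth making explicit rather than gesturing at.
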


However,
we do not know a natural ``universal'' master problem into which all facets
inject via pullbacks of homomorphisms.  

\subsection{Gomory--Johnson's infinite group problem}
In particular, consider the
\emph{infinite group problem} introduced by Gomory and Johnson in their
remarkable papers~\cite{infinite,infinite2}, which will play an import r\^ole later in the
present paper.  Here one takes $P = G = \R^k/\Z^k$ %% or $\Q^k/\Z^k$
and
defines $R_f(G)$ to be the convex hull of the finite-support functions
$y\colon G\to \Z_+$ satisfying~\eqref{eq:group-eqn}.  Minimal functions form a
convex set in the space of functions $\pi\colon G\to\R$, again characterized
by \eqref{eq:minimal}.  For the infinite group problem, there is a subtle
difference between the notions of extreme functions, defined
by~\eqref{eq:facet-definition-like-extreme-function}, and facets; see
\cite{koeppe-zhou:discontinuous-facets}. 
%% Could also cite\cite{basu2016structure,koeppe-zhou:crazy-perturbation} perhaps
However, for the important case of continuous
piecewise linear functions of $\R/\Z$, both notions agree (see \cite[Proposition
2.8]{igp_survey} and \cite[Theorem 
1.2]{koeppe-zhou:discontinuous-facets}), and we will use them
interchangably.%% \footnote{By a continuous piecewise linear function of
  %% $\Q/\Z$, throughout the paper we mean the restriction to $\Q/\Z$ of a continuous
  %% piecewise linear function of $\R/\Z$ with rational breakpoints.}
%% \tred{Careful with our rational spaces and `continuity' etc. Of course, when
%%   we say "continuous piecewise linear function", we don't want to include this
%%   function of x in Q: f(x) = 0 for x<sqrt(2), f(x)=1 for x>sqrt(2), though it
%%   is continuous...} 

\smallbreak
To see that the infinite group problem is not a master problem via pullbacks
of homomorphisms, take any homomorphism $\Homo\colon G \to H$, where $H$ is a finite group. 
Then its kernel $K$ is dense in $G$, and thus there is no continuous minimal
function that factors through $\Homo$.
%% Of course in the cases $G = \Q$, $G = \R$ (additive groups of a field!), 
%% take any $g\in G$, then $\frac1q g \in G$, so $\phi(g) = \phi(q \frac1q g)
%% = q \phi(\frac1q g) = 0$, so $K = G$.
%% In more interesting cases such as $G = \Q+\sqrt2\Z$, still get by above
%% argument that $K$ contains $\Q$ (dense in $G$).

% FIXME: Why would 'continuous' matter? Also check the
% everywhere-discontinuous extreme function.  Are restrictions of it to groups
% such as $\Q + \sqrt2\Z + ...$ interesting?
%%%......

%% software paper  Lemma 5.1:
%% Let π be a piecewise linear Z-periodic function that is minimal valid. If π(b) = 0 for some b ̸∈ Z, then b ∈ Q and π is (1/q)Z-periodic, where q denotes the positive denominator of b written as an irreducible fraction. 

%%%%% Discuss infinite group where?

\subsection{Injections of valid functions into finite and infinite group
  problems by extensions}

%% throughout we have 
%%  $ P \subset \hat P$, 
%%  $G \subset \hat G$, 
%%  $\hat P \subset \hat G$. 

Instead of homomorphic pullbacks, in the present paper, we wish to discuss a
different, more delicate family of injections of valid inequalities into
larger problems that are right inverses of restrictions.  % FIXME: operator?

Let $P \subset \hat P$ be a chain of subsets of $\R^k/\Z^k$ and let $f \in
\R^k/\Z^k$, $f \neq 0$.
The \emph{restriction} $\pi = \hat\pi|_{P}$ of a valid function
$\hat\pi \colon \hat P\to \R_+$ for $R_f(\hat P)$ is a valid function for $R_f(P)$.
If $\pi\colon P\to\R_+$ is a valid function for $R_f(P)$, then we call a valid
function $\hat\pi\colon \hat P\to\R_+$ for $R_f(\hat P)$ an \emph{extension}
of~$\pi$ to $\hat P$ if $\hat\pi|_P = \pi$.  

%% \tred{Note master theorem for 'feasible solutions' is trivial:
%%   Take any $y|_P \in \Z_+^P$ feasible for $R_f(P)$. Then define $y(g) = 0$ for $g \notin P$. 
%%   Then $y$ is a feasible solution for $R_f(G)$.}

%% \tred{Note master theorem for 'valid functions' is trivial: 
%%   Take any $\pi|_P \geq 0$ valid; then define a extension by setting $\pi(g) = 1$ for $g \notin P$. This is
%%   valid for $R_f(G)$ (by disjunctive argument).}

%% \tred{Note that there is no `master theorem for minimal functions': For
%%   non-master finite group problems, the set of minimal functions can have a
%%   complicated structure, in particular it can be non-convex (reference,
%%   example?). But the operation of restriction gives a projection from the polyhedron
%%   of minimal functions of the master problem -- the image is a polyhedron
%%   again!}

%% \tred{
%%   FIXME: Do we know that every valid (or minimal) function for $R_f(P)$ 
%%   is *dominated* by the restriction of a valid (minimal) function for $R_f(G)$? 
%% }

Gomory \cite{gom} proved the following theorem, which---together with the
trivial injection \eqref{eq:primal-injection} of solutions---gives the
justification for calling the problems $R_f(G)$, where $G$ is a finite group,
\emph{master problems}.
\begin{theorem}[{Gomory's master theorem for facets, \cite[Theorem 13]{gom}; see also
    \cite[Theorem 19.19]{Richard-Dey-2010:50-year-survey}}]
  \label{thm:gomory-master}
  All facets $\pi$ of a finite group problem $R_f(P)$, where $P \subset G$,
  $f\in G\setminus\{0\}$, and
  $G \subset \Q^k/\Z^k$ is a finite group, arise from facets of any master
  problem $R_f(\hat G)$, where $\hat G$ is a finite group with $G \subseteq
  \hat G$, by restriction.
\end{theorem}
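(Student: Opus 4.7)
My plan is a polytope argument. Let $E$ denote the set of minimal valid functions $\hat\pi\colon \hat G\to\R_+$ for $R_f(\hat G)$ whose restriction to $P$ coincides with $\pi$. The conditions \eqref{eq:minimal} on $\hat G$ and the fixing equations $\hat\pi(p)=\pi(p)$ for $p\in P$ are linear in the coordinates $(\hat\pi(g))_{g\in\hat G}\in\R^{\hat G}$, so $E$ is a bounded polytope. I will prove two things: (i)~$E$ is nonempty, and (ii)~every vertex of $E$ is a facet of $R_f(\hat G)$. Together these yield a facet $\hat\pi$ extending $\pi$, as required.

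For claim (ii) (the easier half), I would argue as follows. Suppose $\hat\pi^\ast\in E$ is a vertex and $\hat\pi^\ast=\tfrac12(\hat\pi^++\hat\pi^-)$ with $\hat\pi^\pm$ minimal for $R_f(\hat G)$. Restricting to $P$ gives $\pi=\tfrac12(\hat\pi^+|_P+\hat\pi^-|_P)$. Each restriction $\hat\pi^\pm|_P$ is itself minimal for $R_f(P)$, since the conditions \eqref{eq:minimal} on the subdomain $P$ are inherited from those on $\hat G$. The facetness of $\pi$ then forces $\hat\pi^+|_P=\hat\pi^-|_P=\pi$, so $\hat\pi^\pm\in E$; the vertex property of $\hat\pi^\ast$ gives $\hat\pi^+=\hat\pi^-=\hat\pi^\ast$. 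Hence $\hat\pi^\ast$ is extreme, which in the finite-group setting coincides with being a facet.

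The main obstacle is claim (i): producing at least one minimal extension in $E$. My first attempt is the subadditive fill-in $\hat\pi^0(g)=\min\{\sum_{p\in P}y_p\pi(p)\st y\in\Z_+^P,\ \sum_p y_p p = g\text{ in }\hat G\}$. Non-negativity, the normalizations $\hat\pi^0(0)=0$ and $\hat\pi^0(f)=1$ (the last using that any $y\in\Z_+^P$ representing $f$ in $\hat G$ also represents $f$ in $G$ and hence is feasible for $R_f(P)$, so $\sum_p y_p\pi(p)\geq 1$ by validity of $\pi$), subadditivity (by concatenating representations), and $\hat\pi^0|_P=\pi$ (by minimality of $\pi$ on $R_f(P)$) all come for free from the definition. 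The technical heart is the symmetry condition $\hat\pi^0(g)+\hat\pi^0(f-g)=1$: subadditivity only gives $\geq 1$, and getting equality, while simultaneously handling elements $g\in\hat G$ outside the subgroup generated by $P$, requires a further polyhedral argument. Specifically, I would replace $\hat\pi^0$ by a pointwise-minimal element of the polytope of subadditive $\R_+$-valued functions on $\hat G$ that equal $\pi$ on $P$ and are bounded above by both $\hat\pi^0$ and the reverse candidate $g\mapsto 1-\hat\pi^0(f-g)$; minimality of such a point with respect to pointwise dominance automatically enforces equality in the symmetry condition. This step is exactly where the finite-group structure is essential and is the delicate ingredient of the theorem.
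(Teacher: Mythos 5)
The paper does not reprove this result; it cites Gomory's original paper and the Richard--Dey survey. So let me assess your proposal on its own terms.

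The outline (form the polytope $E$ of minimal extensions and argue nonemptiness plus vertex-extremality) is a reasonable route, but there are two concrete problems.

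First, the justification for claim (ii) is wrong. You assert that $\hat\pi^\pm|_P$ is minimal for $R_f(P)$ ``since the conditions \eqref{eq:minimal} on the subdomain $P$ are inherited from those on $\hat G$.'' But \eqref{eq:minimal} characterizes minimality \emph{only for master problems} $R_f(G)$; the paper explicitly says the set of minimal valid functions for general $R_f(P)$ ``can have a complicated structure.'' In fact the claim itself is false: take $\hat G = \frac16\Z/\Z$, $f = 1/2$, $P = \{1/6\}$, and the minimal valid function $\hat\pi$ equal to $1/2$ at all nonzero elements of $\hat G$ except $\hat\pi(1/2)=1$; then $\hat\pi|_P(1/6)=1/2$, while the unique minimal valid function for $R_{1/2}(\{1/6\})$ takes the value $1/3$. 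The conclusion you need can nevertheless be rescued by a different argument: if a \emph{minimal} $\pi$ is the average of two \emph{valid} functions $\sigma^\pm$ for $R_f(P)$, then each $\sigma^\pm$ must itself be minimal (replace them by minimal functions $\tilde\sigma^\pm\le\sigma^\pm$ and use minimality of $\pi$ to force $\tilde\sigma^\pm=\sigma^\pm$). That lemma, not \eqref{eq:minimal}, is what lets you conclude $\hat\pi^\pm|_P=\pi$.

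Second, your treatment of claim (i), which you correctly identify as the heart of the theorem, is not convincing as written. The fill-in $\hat\pi^0$ is $+\infty$ off the subgroup $\langle P\rangle$, and the ``reverse candidate'' $g\mapsto 1-\hat\pi^0(f-g)$ can likewise be undefined or negative, so it is not clear that the polytope of subadditive functions you propose to minimize over is nonempty. The claim that a pointwise-minimal subadditive function in that set automatically satisfies the symmetry condition also requires proof: decreasing a value $\sigma(g_0)$ can break a tight subadditivity constraint $\sigma(g_0)+\sigma(h)\ge\sigma(g_0+h)$. There is a much shorter route. Set $\hat\pi^1(g)=\pi(g)$ for $g\in P$ and $\hat\pi^1(g)=1$ for $g\notin P$; this is already a valid function for $R_f(\hat G)$ (any solution $y$ with support inside $P$ is handled by validity of $\pi$, and any $y$ using some $g_0\notin P$ already contributes $\hat\pi^1(g_0)y(g_0)\ge 1$). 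Now take a pointwise-minimal valid function $\hat\pi\le\hat\pi^1$ subject to $\hat\pi|_P=\pi$. If there were a valid $\hat\pi'\le\hat\pi$ with $\hat\pi'\ne\hat\pi$, then either $\hat\pi'|_P=\pi$ (contradicting pointwise-minimality) or $\hat\pi'|_P<\pi$ somewhere (contradicting minimality of $\pi$, since restrictions of valid functions are valid). Hence $\hat\pi$ is minimal for $R_f(\hat G)$, so it satisfies \eqref{eq:minimal} (in particular symmetry) automatically and witnesses $E\ne\emptyset$ without any symmetrization step.
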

(However, though every restriction of a facet of $R_f(\hat G)$ is a valid
function for $R_f(P)$, it is not necessarily a facet of $R_f(P)$.)  \smallbreak

In the present paper, \textbf{we show that this theorem extends to the 1-row infinite
  group problem}, i.e., $\hat G = \R/\Z$. We have the following theorem.
\begin{theorem}[Infinite master theorem for facets]
  \label{th:infinite-master-for-facets}
  All facets $\pi$ of a finite group problem $R_f(P)$, where $P \subset G$,
  $f\in G\setminus\{0\}$, and $G \subset \Q/\Z$ is a finite group, arise from
  facets of the infinite group problem $R_f(\R/\Z)$ by restriction.  (The
  facets of the infinite group problem can be chosen as continuous piecewise
  linear two-slope functions with rational breakpoints.)
\end{theorem}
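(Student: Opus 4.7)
The strategy is a two-step lifting. First I would lift a facet of $R_f(P)$ to a facet of the finite master problem $R_f(G)$ using Gomory's master theorem; then I would lift that facet to a continuous piecewise linear two-slope facet of the infinite group problem $R_f(\R/\Z)$ using the variant of Basu--Hildebrand--Molinaro's approximation theorem advertised in the abstract.

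Every finite subgroup of $\Q/\Z$ is cyclic, so $G = \frac{1}{q}\Z/\Z$ for some $q \in \N$. By Gomory's master theorem (Theorem~\ref{thm:gomory-master}), applied with the choice $\hat G = G$ itself, there exists a facet $\tilde\pi$ of the master problem $R_f(\frac{1}{q}\Z/\Z)$ whose restriction to $P$ equals $\pi$. Since $R_f(G)$ is a rational polyhedron, $\tilde\pi$ takes rational values at every point of $\frac{1}{q}\Z/\Z$, placing it in the class of functions (rational breakpoints, rational values) handled by the approximation theorem of this paper.

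Next I would invoke the corollary promised in the abstract: every extreme function for the finite group problem on $\frac{1}{q}\Z/\Z$ is the restriction to $\frac{1}{q}\Z/\Z$ of a continuous piecewise linear two-slope extreme function $\hat\pi\colon \R/\Z \to \R_+$ for the infinite group problem, with breakpoints on some refinement $\frac{1}{Mq}\Z/\Z$. Applied to $\tilde\pi$ this yields $\hat\pi$ with $\hat\pi|_{\frac{1}{q}\Z/\Z} = \tilde\pi$. Since $P \subset \frac{1}{q}\Z/\Z$, restricting once more gives $\hat\pi|_P = \tilde\pi|_P = \pi$. Because $\hat\pi$ is a continuous piecewise linear function on $\R/\Z$, extremality and facetness coincide (\cite[Proposition~2.8]{igp_survey} and \cite[Theorem~1.2]{koeppe-zhou:discontinuous-facets}), so $\hat\pi$ is the desired facet of $R_f(\R/\Z)$, completing the argument.

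The hard part is clearly the cited corollary itself, namely the construction of the continuous piecewise linear two-slope extension. Basu--Hildebrand--Molinaro's original approximation theorem produces a nearby two-slope extreme function but generically perturbs the prescribed values on the grid $\frac{1}{q}\Z/\Z$, which is fatal here: any perturbation of $\tilde\pi|_P$ would destroy the equality $\hat\pi|_P = \pi$. The technical work of the paper is therefore to adapt the interpolation/fill-in procedure so that the values on $\frac{1}{q}\Z/\Z$ are preserved \emph{exactly}, while still yielding a two-slope extreme function, possibly at the cost of introducing auxiliary breakpoints on a finer grid $\frac{1}{Mq}\Z/\Z$; maintaining both subadditivity and the symmetry condition~\eqref{eq:minimal:symm} throughout the fill-in under this rigid boundary data is the delicate step.
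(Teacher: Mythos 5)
Your proposal follows exactly the same route as the paper's own proof: first lift $\pi$ from $P$ to a facet of the master problem $R_f(G)$ via Gomory's master theorem (noting rationality from extreme-point structure of the rational polyhedron), then invoke the injective approximation theorem (Theorem~\ref{thm:approx-theorem} applied to the interpolation of $\pi|_G$) to obtain a continuous piecewise linear two-slope extreme function $\phi$ with $\phi|_G=\pi|_G$, hence $\phi|_P = \pi$. You also correctly isolate the injective approximation theorem as the technical heart of the argument and correctly note that extremality and facetness coincide in the continuous piecewise linear case, making your write-up faithful to the paper's reasoning.
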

Thus, for each fixed right-hand side $f$, \textbf{Gomory--Johnson's infinite
  group problem is the correct ``universal'' master problem for all cyclic
  group relaxations.}% from the
% viewpoint of facets (extreme functions)
\smallbreak

\begin{openquestion}
  Does \autoref{th:infinite-master-for-facets} generalize from cyclic group
  problems and $\R/\Z$ to arbitrary finite group problems and $\R^k/\Z^k$?
\end{openquestion}
\smallbreak

Before we explain the specific extension of the present paper that
proves \autoref{th:infinite-master-for-facets}, we review related constructions.

For continuous piecewise linear valid functions for $R_f(\R/\Z)$ with rational
breakpoints, %% the
%% following has been known.
Gomory and Johnson \cite{infinite,infinite2} proved the following
theorem.
\begin{theorem}[{see \cite[Theorem 19.23]{Richard-Dey-2010:50-year-survey} or
    \cite[Theorem 8.3]{igp_survey_part_2}}]
  \label{thm:restriction-to-cyclic-including-breakpoints-extreme}
Let $\pi$ be an extreme function for $R_f(\R/\Z)$ that is continuous and
  piecewise linear with breakpoints in a cyclic subgroup $G = \frac1q\Z/\Z$%
  % that also contains~$f$
  .\footnote{The hypothesis regarding the breakpoints
    cannot be removed.  \cite[Theorem 8.2, part (2)]{igp_survey_part_2} (as
    well as the claim that $R_f(G)$ is a face of $R_f(\hat G)$), which seems
    to imply otherwise, is wrong as stated.  We provide a counterexample in
    \autoref{s:restriction-extreme-not-extreme}.
      %% \tred{I think we can omit "that also contains f" --- cite software paper 5.1?}
    }  Then the restriction
  $\pi|_G ={}\sagefunc{restrict_to_finite_group}(\pi, q)$ is an extreme
  function for $R_f(G)$. 
\end{theorem}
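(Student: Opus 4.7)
The plan is to prove the contrapositive by lifting a hypothetical non-trivial convex decomposition of $\pi|_G$ in $R_f(G)$ to one of $\pi$ in $R_f(\R/\Z)$. Suppose $\pi|_G = \tfrac12(\pi^+_G + \pi^-_G)$ with $\pi^+_G, \pi^-_G \colon G \to \R_+$ minimal valid functions for $R_f(G)$ and $\pi^+_G \neq \pi^-_G$; note that $f \in G$, since otherwise $R_f(G)$ would be vacuous. Let $\pi^+, \pi^- \colon \R/\Z \to \R_+$ be the continuous piecewise linear interpolations of $\pi^+_G, \pi^-_G$ through the points of $G$ (the operation \sagefunc{interpolate_to_infinite_group}). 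Because $\pi$ itself is continuous and piecewise linear with breakpoints in $G$, it coincides with the interpolation of its own restriction $\pi|_G$, so by linearity of the interpolation operator, $\pi = \tfrac12(\pi^+ + \pi^-)$ holds pointwise on $\R/\Z$.

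The crux of the argument is the \emph{interpolation lemma}: if $\pi_G\colon G \to \R_+$ is minimal for $R_f(G)$, then its piecewise linear interpolation $\bar\pi_G$ is minimal for $R_f(\R/\Z)$. Non-negativity and the normalizations $\bar\pi_G(0)=0$, $\bar\pi_G(f)=1$ are immediate. For the symmetry condition $\bar\pi_G(x)+\bar\pi_G(f-x)=1$, both sides are continuous and piecewise linear in $x$ with breakpoints in $G$ (using $f \in G$), and they agree on $G$ by symmetry of $\pi_G$, hence everywhere. For subadditivity $\Delta\bar\pi_G(x,y) \geq 0$, argue cell-by-cell on the product grid: on each square cell $C_{a,b} = [a/q,(a+1)/q]\times[b/q,(b+1)/q]$, the only breakpoint of $\bar\pi_G$ traversed by $x+y$ is $(a+b+1)/q$, attained along the anti-diagonal $(x-a/q)+(y-b/q) = 1/q$. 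This anti-diagonal splits $C_{a,b}$ into two triangles whose four combined vertices are the corners of $C_{a,b}$, all of which lie in $G \times G$, and $\Delta\bar\pi_G$ is affine on each triangle. Thus subadditivity of $\bar\pi_G$ on $C_{a,b}$ reduces to the four instances of $\Delta\pi_G \geq 0$ at those corners, each of which holds by subadditivity of $\pi_G$ on~$G$.

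Granting the interpolation lemma, $\pi^+$ and $\pi^-$ are minimal valid functions for $R_f(\R/\Z)$. Extremality of $\pi$ then forces $\pi^+ = \pi^- = \pi$, and restricting back to $G$ gives $\pi^+_G = \pi^-_G = \pi|_G$, contradicting the assumption that the decomposition was non-trivial. The main (and essentially only) technical obstacle is the cell-and-diagonal splitting argument for subadditivity of the interpolated function; the rest of the proof is routine bookkeeping about restriction, interpolation, and linearity.
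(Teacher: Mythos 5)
Your proof is correct. The paper does not actually prove this theorem---it cites it from Gomory--Johnson and from the surveys \cite{Richard-Dey-2010:50-year-survey,igp_survey_part_2}---so there is no internal proof to compare against; your argument is precisely the standard one in those references. You reconstruct the interpolation lemma (that the continuous piecewise linear interpolation of a minimal function for $R_f(G)$ is minimal for $R_f(\R/\Z)$, which is \cite[Theorem 3.1]{infinite} plus the minimality remark the paper attributes to \cite[section 19.5.1.2]{Richard-Dey-2010:50-year-survey}), with the cell-and-diagonal reduction of subadditivity to the four corners of each square in $\frac1q\Z\times\frac1q\Z$; this is sound because $\bar\pi_G(x+y)$ has its only breakpoint in a square along the anti-diagonal, so $\Delta\bar\pi_G$ is affine on each of the two resulting triangles and its vertices are corners of the square. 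Combining this with the observations that $\pi$ equals the interpolation of its own restriction (it is already piecewise linear with breakpoints in $G$) and that interpolation is a linear operator lifts any nontrivial decomposition $\pi|_G = \tfrac12(\pi^+_G+\pi^-_G)$ to one of $\pi$, contradicting extremality. Your note that $f\in G$ is implicitly required (else $R_f(G)$ is empty and the conclusion is vacuous, and the symmetry step of the interpolation lemma would not go through) is also correct and worth keeping.
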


In the situation of this theorem, $\pi$ is a particular extension of $\pi|_G$,
the \emph{interpolation extension}, 
$\pi = \sagefunc{interpolate_to_infinite_group}(\pi|_G)$.  
Gomory and Johnson \cite[Theorem~3.1]{infinite} 
% cited as \cite[section 1.7, Theorem 1.9]{infinite2}
proved that the interpolation
extension $\pi$ of a minimal function~$\pi|_G$ is valid for $R_f(\R/\Z)$.
% (actually, they state it for interpolations of subadditive functions and renormalize...)
In fact, it is also minimal (see, e.g., \cite[section
19.5.1.2]{Richard-Dey-2010:50-year-survey}).  However, 
interpolations of extreme functions $\pi|_G$ to the infinite group problem are
not necessarily extreme.  Dey et al.~\cite{dey1} gave an 
% the first
example,
%\sagefuncwithgraph{drlm_not_extreme_1}
\sage{drlm\_not\_extreme\_1} (see \cite[Figure 15]{igp_survey_part_2}),
illustrating this fact.  Basu et al.~\cite{igp_survey_part_2}
proved the following characterization.
\begin{theorem}[{\cite[Theorem 8.6]{igp_survey_part_2}}, rephrased]
  Let $G = \frac1q\Z/\Z$ and let $\pi|_G$ be an extreme function for
  $R_f(G)$. Let $m \geq 3$ and let $\hat G = \frac1{mq}\Z/\Z$.  Then the
  interpolation extension $\pi$ of $\pi|_G$ is extreme for $R_f(\R/\Z)$ if and
  only if the restriction $\pi|_{\hat G}$ is extreme for $R_f(\hat G)$.
\end{theorem}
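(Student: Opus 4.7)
The plan is to prove both directions. The ``only if'' direction is immediate from \autoref{thm:restriction-to-cyclic-including-breakpoints-extreme}: if $\pi$ is an extreme continuous piecewise linear function for $R_f(\R/\Z)$ with breakpoints in $G = \tfrac{1}{q}\Z/\Z$, then since $G \subseteq \hat G = \tfrac{1}{mq}\Z/\Z$, applying that theorem with $\hat G$ in place of $G$ yields that $\pi|_{\hat G}$ is extreme for $R_f(\hat G)$.

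For the ``if'' direction, suppose $\pi|_{\hat G}$ is extreme for $R_f(\hat G)$ and write $\pi = \tfrac12(\pi^+ + \pi^-)$ with $\pi^+, \pi^-$ minimal valid functions for $R_f(\R/\Z)$. First I would observe that the restriction of a minimal valid function to a subgroup is again minimal, since the defining conditions~\eqref{eq:minimal} are verified pointwise. Hence $\pi^\pm|_{\hat G}$ are minimal for $R_f(\hat G)$, and the identity $\pi|_{\hat G} = \tfrac12(\pi^+|_{\hat G} + \pi^-|_{\hat G})$ together with extremality of $\pi|_{\hat G}$ forces $\pi^+|_{\hat G} = \pi^-|_{\hat G} = \pi|_{\hat G}$.

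The second and main stage is to promote this finite-grid equality to a global equality on $\R/\Z$. The tool I would use is the Gomory--Johnson Interval Lemma: if a minimal function $\phi$ satisfies $\Delta\phi \equiv 0$ on a full-dimensional rectangle $U \times V$ in the 2D subadditivity domain, then $\phi$ is affine on each of $U$, $V$, and $U+V$. Since $\pi^\pm \geq 0$ and $\Delta\pi = \tfrac12(\Delta\pi^+ + \Delta\pi^-)$, wherever $\Delta\pi$ vanishes so do $\Delta\pi^+$ and $\Delta\pi^-$. Because $\pi$ is affine on each $1/q$-interval, $\Delta\pi$ is piecewise affine on the 2D polyhedral complex associated to the $\hat G$-grid. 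Thus on each 2D cell of this complex, if $\Delta\pi$ vanishes at three affinely independent vertices, then, being nonnegative and affine there, it vanishes on the entire cell, triggering the Interval Lemma. Extremality of $\pi|_{\hat G}$ supplies a system of tight subadditivity relations at vertices of $\hat G \times \hat G$, and the hypothesis $m \geq 3$ is what makes this system dense enough to cover every 2D cell of the $\hat G$-refined complex. Once the Interval Lemma is applied to each such cell, $\pi^\pm$ is forced to be affine on every subinterval of the $\hat G$-grid; combined with $\pi^\pm|_{\hat G} = \pi|_{\hat G}$, this forces $\pi^\pm = \pi$ everywhere.

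The main obstacle will be the combinatorial covering step: verifying that the tight relations implied by extremality of $\pi|_{\hat G}$ indeed cover every 2D face of the $\hat G$-refined complex when $m \geq 3$. This is the crux of the transfer from the finite to the infinite setting, and the hypothesis $m \geq 3$ enters precisely here---the failure at $m = 2$, witnessed by Dey et al.'s example \sage{drlm\_not\_extreme\_1}, shows that this combinatorial step cannot be weakened.
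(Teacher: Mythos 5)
The paper does not prove this theorem; it is cited as \cite[Theorem 8.6]{igp_survey_part_2} (with the $m=4$ case attributed to \cite{basu-hildebrand-koeppe:equivariant}), so I am evaluating your argument on its own merits.

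Your ``only if'' direction is correct: since the interpolation $\pi$ has breakpoints in $G\subset\hat G$, \autoref{thm:restriction-to-cyclic-including-breakpoints-extreme} applies with $\hat G$ as the ambient cyclic subgroup, giving extremality of $\pi|_{\hat G}$. (Alternatively one can argue by contrapositive: a nontrivial convex decomposition of $\pi|_{\hat G}$ on $\hat G$ interpolates to one of $\pi$, since $\pi$ is exactly the interpolation of $\pi|_{\hat G}$.)

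The ``if'' direction has a genuine gap, and it is precisely the step you flag as ``the crux.'' After establishing $\pi^\pm|_{\hat G}=\pi|_{\hat G}$, you claim that extremality of $\pi|_{\hat G}$ makes the set of tight subadditivity relations on $\hat G\times\hat G$ ``dense enough to cover every 2D cell'' of the complex, so that the Interval Lemma can be applied everywhere. This is not true, and it is not what $m\geq3$ buys. Extremality of $\pi|_{\hat G}$ is a rank condition on the linear system cut out by the \emph{tight} relations $\Delta\pi=0$ on $\hat G\times\hat G$; it says nothing about cells of $\Delta\P$ on which $\Delta\pi>0$, and those cells need not carry any tight relations at all. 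Minimal piecewise linear functions routinely have such ``uncovered'' intervals, and the Interval Lemma has no purchase there. If every 2D cell were additively covered whenever $\pi|_{\hat G}$ is extreme, the theorem would be trivial (and would already hold for $m=1$); the entire difficulty is to control the behavior of the perturbation $\bar\pi=\pi^+-\pi$ on uncovered intervals, where it is \emph{not} forced to be affine.

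The actual mechanism in \cite{basu-hildebrand-koeppe:equivariant} (and its refinement to $m=3$) is the contrapositive: if $\pi$ is \emph{not} extreme, the equivariant perturbation theory produces a nonzero perturbation $\bar\pi$ that is affine on covered intervals and, on uncovered intervals, is constrained only by a system of ``reflections and translations'' coming from the additivity relations that involve them. The role of $m\geq3$ is to guarantee that such a perturbation can be chosen so that it does not vanish identically on $\frac1{mq}\Z$, yielding a nontrivial perturbation of $\pi|_{\hat G}$ and hence non-extremality of $\pi|_{\hat G}$. That is a construction on the \emph{uncovered} part of the domain, not a coverage statement. Your covering heuristic conflates ``the tight relations determine $\pi|_{\hat G}$ uniquely'' (a rank statement) with ``the tight relations fill every 2D cell'' (a geometric statement), and the second does not follow from the first.
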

(The case $m=4$ appeared in~\cite{basu-hildebrand-koeppe:equivariant}.)
Hence \sagefunc{interpolate_to_infinite_group} does not provide a suitable
injection for proving \autoref{th:infinite-master-for-facets}.

%% \tred{Here we could comment on open questions regarding 'lifting'.}
%% If $\pi|{\hat G}$ is not extreme, then it can be ``lifted'' to a function
%% $\hat\pi$ that is extreme for $\pi|{\hat G}$; but it is unknown whether
%% this can always be done in a way so that $\hat\pi = \pi|{\hat G}$.
%% Even if it is, then it is not guaranteed that $\hat\pi$ is extreme 
%% for $R_f(\Q/\Z)$. Repeating this process would lead to a sequence 
%% of functions; but without a specific choice of $m$, this could be infinite.
%% Unknown what can be said about limit functions in this case.

\smallbreak

Gomory--Johnson \cite[Theorem 3.3]{infinite} (with further developments by
Johnson \cite{johnson}) introduced the \emph{two-slope fill-in} procedure,
which constructs a subadditive valid function
$\pi_{\textrm{\textup{fill-in}}} = \sage{two\_slope\_fill\_in}(\pi|_G)$ via
certain sublinear (gauge) functions that arise via the connection to the
mixed-integer infinite relaxation.  $\pi_{\textrm{\textup{fill-in}}}$ is
continuous and piecewise linear with two slopes.  However,
$\pi_{\textrm{\textup{fill-in}}}$ is not symmetric.  Therefore, this extension
does not provide a suitable injection for proving
\autoref{th:infinite-master-for-facets}.

\smallbreak
Recently, Basu--Hildebrand--Molinaro
\cite{bhm:dense-2-slope,bhm:dense-2-slope-fullpaper} proved the following
theorem.
\begin{theorem}[{\cite[Theorem 2]{bhm:dense-2-slope-fullpaper}}]
  Let $\pi$ be a continuous minimal valid function for $R_f(\R/\Z)$, 
  where $f\in\Q/\Z$.
  Let $\epsilon>0$.  Then there exists a
  function~$\pi_{\mathrm{sym}} = \sage{symmetric\underscore 2\underscore slope\underscore
    fill\underscore in}(\pi)$
  with the following properties:
  \begin{enumerate}[\rm(i)]
  \item $\pi_{\mathrm{sym}}$ is continuous and piecewise linear with rational breakpoints
    and 2 slopes,
  \item $\pi_{\mathrm{sym}}$ is extreme for $R_f(\R/\Z)$,
  \item $\|\pi_{\mathrm{sym}}-\pi\|_{\infty} \leq \epsilon$.
  \end{enumerate}
\end{theorem}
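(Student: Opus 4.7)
The plan is to construct $\pi_{\mathrm{sym}}$ as a global two-slope ``zigzag fill-in'' of $\pi$ on a sufficiently fine rational grid and then invoke the Gomory--Johnson two-slope extremality theorem. I would fix a constant $M$ exceeding the Lipschitz constant $L$ of $\pi$ (which is finite because continuous minimal valid functions on $\R/\Z$ are Lipschitz) and an integer denominator $q$ such that $f, f/2, (f+1)/2 \in \tfrac{1}{q}\Z/\Z$ and $\tfrac{M+2L}{2q}\leq\epsilon$. On each sub-interval $[i/q,(i+1)/q]$, I would replace $\pi$ by a continuous piecewise linear tent of globally fixed slopes $\pm M$ (a $\wedge$- or $\vee$-shape) that interpolates the two endpoint values $\pi(i/q)$ and $\pi((i+1)/q)$; such a tent exists and is unique once the orientation is chosen, provided $M\geq L$. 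By construction, $\pi_{\mathrm{sym}}$ is continuous, piecewise linear with rational breakpoints, uses exactly two slopes, and satisfies $\|\pi_{\mathrm{sym}}-\pi\|_\infty\leq\epsilon$ by a triangle-inequality split into a chord-to-$\pi$ deviation of at most $L/q$ and a tent-to-chord deviation of at most $M/(2q)$.

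The orientations ($\wedge$ vs.\ $\vee$) are determined by a pairing-and-reflection rule: each sub-interval $[i/q,(i+1)/q]$ is paired with its image under $x\mapsto f-x$, and the tents on paired sub-intervals are required to be reflections of each other. Because the two fixed points $f/2$ and $(f+1)/2$ of the reflection are grid points, no interval is self-paired, so the rule is globally consistent; combined with the symmetry of $\pi$ on $G_q = \tfrac{1}{q}\Z/\Z$ (inherited from the symmetry of $\pi$ itself), this yields $\pi_{\mathrm{sym}}(x)+\pi_{\mathrm{sym}}(f-x)=1$ pointwise on $\R/\Z$ as an exact algebraic identity.

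Subadditivity is the crux. Since the slopes of $\pi_{\mathrm{sym}}$ are globally $\pm M$, $\Delta\pi_{\mathrm{sym}}$ is piecewise affine on a two-dimensional polyhedral complex whose vertices are determined by the grid, the interior tent apices, and the lines $\{x+y=c/q\}$; subadditivity therefore reduces to a finite check $\Delta\pi_{\mathrm{sym}}\geq 0$ at these vertices. On the symmetry locus $\{x+y\equiv f\}$, the reflection rule enforces $\Delta\pi_{\mathrm{sym}}=0$ exactly; off the locus, one needs $\Delta\pi\geq\gamma>0$ uniformly on an appropriate closed set, with gap $\gamma$ large enough to absorb the $O(M/q)$ perturbation introduced by the tent construction. \textbf{The main obstacle} is arranging this quantitative strict-subadditivity condition: for a general continuous minimal $\pi$ there may be additional additivity faces on which $\Delta\pi=0$, so a preliminary regularization step is needed, replacing $\pi$ by a piecewise linear minimal perturbation $\tilde\pi$ (close to $\pi$ in sup norm, e.g.\ obtained by interpolating $\pi|_{G_q}$ and then convexly combining with a rigid minimal function such as \sage{gomory\_fractional}) that is strictly subadditive outside the symmetry locus with a uniform gap. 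All quantitative parameters ($M$, $q$, the regularization size, and the tent deviation $M/(2q)$) must be balanced so the estimates close simultaneously, at which point minimality of $\pi_{\mathrm{sym}}$ is established and the Gomory--Johnson two-slope theorem delivers its extremeness for free.
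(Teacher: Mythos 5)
The statement you are asked to prove is cited by the paper from Basu--Hildebrand--Molinaro; the paper does not prove it, and only sketches their three-step strategy (first build a perturbation $\pi_{\mathrm{comb}}$ that is \emph{strongly} subadditive away from the trivial additivities and the symmetry locus, then apply the non-symmetric \sage{two\_slope\_fill\_in}, and finally symmetrize). Your proposal is structurally different: you try to build the symmetry directly into the tent orientations rather than filling in first and symmetrizing afterward, which is in fact closer in spirit to the paper's own injective construction of $\phi$ in Theorem~\ref{thm:approx-theorem}. That is a reasonable idea, but your outline does not close the argument.

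The concrete gaps are in the regularization and subadditivity steps. The suggested regularization---interpolating $\pi|_{G_q}$ and convexly combining with \sage{gomory\_fractional}---cannot produce a function that is strictly subadditive away from the trivial additivities and the symmetry locus, because the Gomory fractional/GMIC-type functions have large two-dimensional additivity faces of their own (e.g.\ the triangle $\{(x,y): x,y\ge 0,\ x+y\le f\}$); convexly combining with them does not destroy the additivity faces of $\pi$ that lie inside those regions. Constructing a suitable $\pi_{\mathrm{comb}}$ is the actual heart of Basu et al.'s argument, and you would need a genuinely different device. Moreover, even granting a uniform gap $\gamma>0$ away from the trivial and symmetric additivities, the uniform-gap estimate only handles the interior: it gives no control at vertices of the induced polyhedral complex that lie \emph{near but not on} the lines $x=0$, $y=0$, $x+y\equiv 0$, and $x+y\equiv f$, where $\Delta\pi$ decays to zero and where a direct, multi-case analysis of the tent slopes is required (this is precisely where both Basu et al.\ and the present paper expend most of their effort). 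Your outline also leaves the pairing-and-reflection rule underspecified---it determines orientations only up to swapping within each pair, and the subadditivity check near the trivial additivities depends on fixing that tiebreak (the paper uses the $\pi<\tfrac12$ vs.\ $\pi>\tfrac12$ rule for exactly this purpose). Finally, a smaller issue: since $\pi$ is merely continuous, the values $\pi(i/q)$ need not be rational, so the tent apices---and hence the breakpoints of $\pi_{\mathrm{sym}}$---need not be rational unless you first rationally round those values, a step your construction omits.
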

Basu et al.'s procedure works as follows, if it is given a minimal $\pi$ that
is already piecewise linear; for our purposes, we would apply it to the
interpolation $\pi = {\sagefunc{interpolate_to_infinite_group}}(\pi|_G)$.
\begin{enumerate}
%% \item \label{bhm-step-pi_pwl}
%%   Construct a continuous piecewise linear $\epsilon$-approximation
%%   $\pi_{\mathrm{comb}}$ that is a minimal function.
\item \label{bhm-step-pi_comb}
  Construct a continuous piecewise linear % $\epsilon$-
  approximation
  $\pi_{\mathrm{comb}}$ that is a minimal function and strongly subadditive,
  $\Delta\pi(x,y)>\gamma>0$, outside of some $\delta$-neighborhood of the trivial additive
  relations $(x,0)$, $(0,y)$ and the symmetry relations $(x, f-x)$.  
\item \label{bhm-step-pi_fill-in}
  Then
  $\pi_{\textrm{\textup{fill-in}}} =
  \sagefunc{two_slope_fill_in}(\pi_{\mathrm{comb}}|_{\hat{G}})$
  where $\hat G = \frac1{\hat q}\Z/\Z$ for some sufficiently large $\hat q$.  
  This gives a subadditive, but not symmetric function
  $\pi_{\textrm{\rm fill-in}}$ that is piecewise linear with two slopes.
\item \label{bhm-step-pi_sym}
  Finally, define $\pi_{\mathrm{sym}}$ as a ``symmetrization'' of
  $\pi_{\textrm{fill-in}}$. This last step crucially depends on step~(\ref{bhm-step-pi_comb}), and
  on specific parameter choices made in (\ref{bhm-step-pi_comb}) and
  (\ref{bhm-step-pi_fill-in}), to make sure that
  symmetrization does not destroy subadditivity.  Then  $\pi_{\mathrm{sym}}$
  is a piecewise linear minimal function with two slopes and hence, by Gomory--Johnson's Two
  Slope Theorem \cite{infinite}, an extreme function.
\end{enumerate}
We illustrate the construction on an example % several examples
in
\autoref{fig:phi-and-pi-sym} (right).  Note that the additivity-reducing
step~(\ref{bhm-step-pi_comb}) may modify the values of $\pi(x)$ for 
$x\in\hat G$, and therefore this procedure does not define an extension of
$\pi|_G$.  
%%% Not modified for all x. In figure 1, values of \pi on the right of f are
%%% unchanged, and in figure 3b, pi_comb=\pi. 
                                %%%% G is....
The same is true for the more general construction given by Basu and Lebair 
in \cite{lebair-basu:approximation} for the $k$-dimensional case.

\begin{figure}[tp]
\includegraphics[width=.4\textwidth]{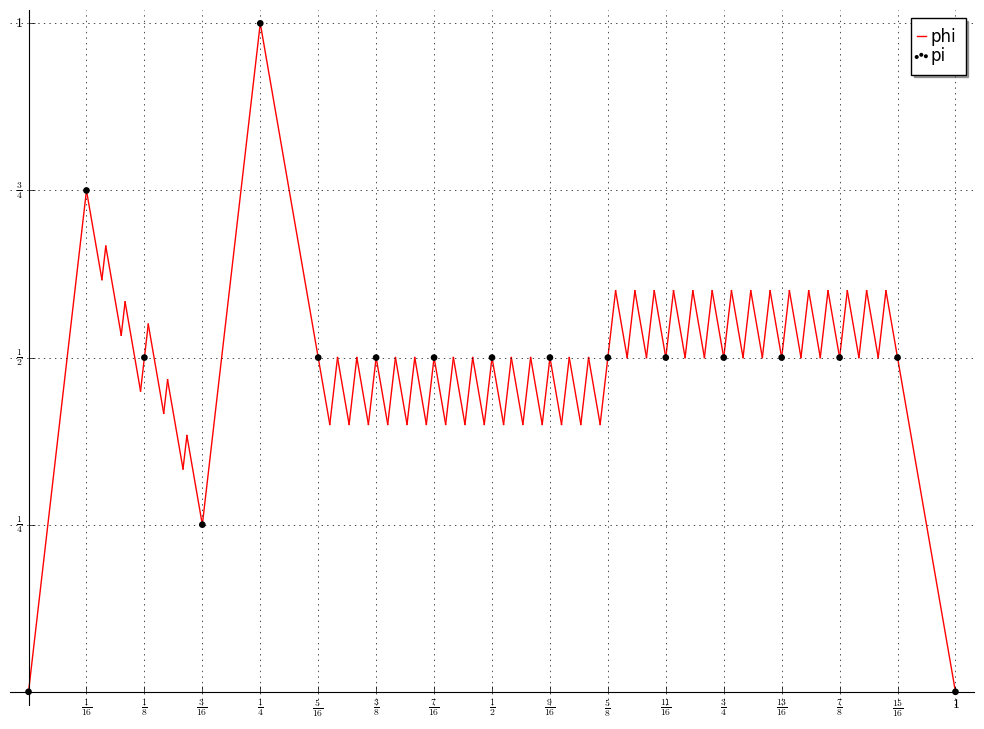}
\includegraphics[width=.4\textwidth]{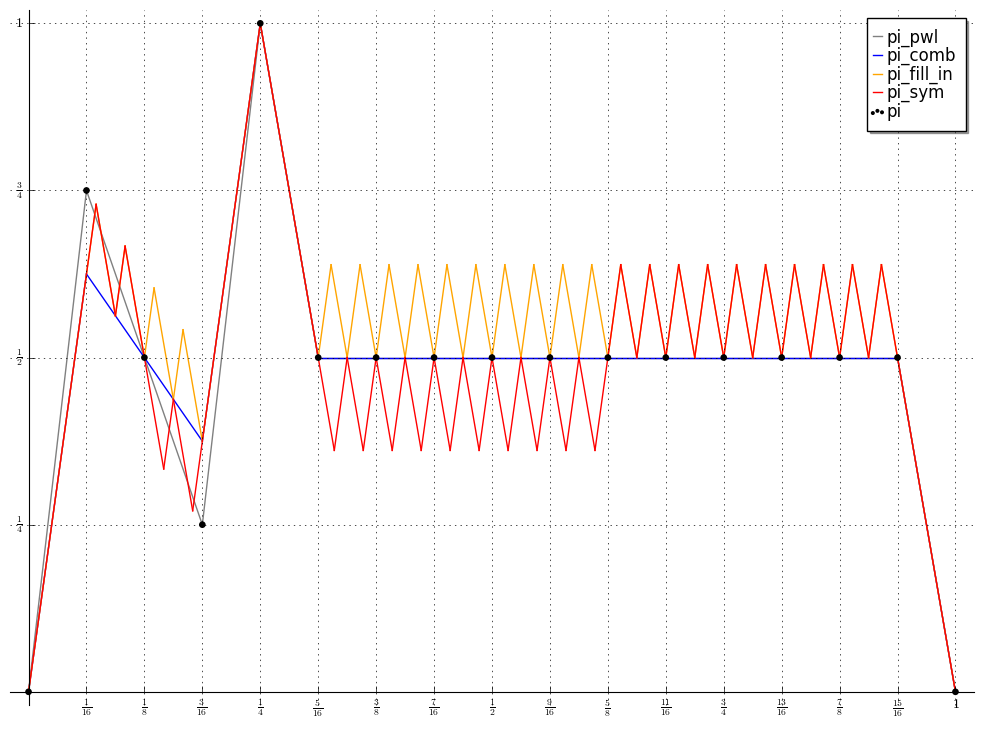}
\caption{Graphs of the approximating extreme functions (left)
  $\phi=\sage{injective\_2\_slope\_fill\_in}(\pi)$ from the present paper
  and (right)
  $\pi_{\text{sym}}=\sage{symmetric\underscore 2\underscore slope\underscore
    fill\underscore in}(\pi)$ from Basu--Hildebrand--Molinaro
  for an example function~$\pi$.  As noted in the introduction, in contrast to
  our approximation $\phi$, Basu et al.'s approximation
  $\pi_{\text{sym}}$ does not preserve the function values on the group
  $\frac{1}{q}\Z/\Z$.}
\label{fig:phi-and-pi-sym}
\end{figure}

\subsection{Technique: Injective approximation of minimal functions by extreme
  functions} 

To prove \autoref{th:infinite-master-for-facets}, we introduce the following variant of
Basu--Hildebrand--Molinaro's approximation; see \autoref{fig:phi-and-pi-sym} (left).

\begin{theorem}[Injective approximation theorem]
  \label{thm:approx-theorem}
  Let $\pi$ be a minimal valid function for $R_f(\R/\Z)$ that is continuous
  and piecewise linear with breakpoints in $\frac{1}{q}\Z/\Z$ and takes only
  rational values at the breakpoints.  There exist integers $r = r_\pi$ and
  $D=D_\pi$ and a function $m_\pi(\epsilon) = O(\epsilon^{-1})$ so that for
  every $\epsilon>0$ and every integer $m \geq m_\pi(\epsilon)$ that is an
  integer multiple of~$r$,
  there exists a function $\phi = \sage{injective\_2\_slope\_fill\_in}(\pi)$
  with the following properties:
  \begin{enumerate}[\rm(i)]
  \item $\phi$ is continuous and piecewise linear with
    breakpoints in $\frac{1}{Dmq}\Z/\Z$ and 2 slopes, 
  \item $\phi$ is extreme for $R_f(\R/\Z)$,
  \item $\|\phi-\pi\|_{\infty} \leq \epsilon$,
  \item $\phi|_{\frac1{mq}\Z/\Z} = \pi|_{\frac1{mq}\Z/\Z}$.
  \end{enumerate}
\end{theorem}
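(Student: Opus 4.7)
My approach is to perform the two-slope fill-in directly on the fine grid $\hat G := \tfrac{1}{mq}\Z/\Z$, bypassing the additivity-enhancing ``combing'' step of Basu--Hildebrand--Molinaro; this is precisely what allows us to preserve the values on $\hat G$ (property~(iv)). Choose slopes $s_\pm$ with $s_- < \min_x \pi'(x) \leq \max_x \pi'(x) < s_+$, and let $r$ be the smallest positive integer such that, whenever $r \mid m$, $f/2 \in \hat G$; this prevents any fine interval from being self-paired under $x \mapsto f-x$. On each fine interval $I = [a, a+\tfrac1{mq}]$, $\pi$ is affine with some slope $s(I) \in \{s_1, \dots, s_k\}$, and I define $\phi|_I$ as the unique continuous two-slope zig-zag, using slopes $s_\pm$, that matches $\pi$ at the two endpoints of $I$. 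There are exactly two admissible zig-zags on $I$: a \emph{tent} (slope $s_+$ then $s_-$) and an \emph{inverted tent} (slope $s_-$ then $s_+$); the interior breakpoint lies at $a + \lambda(I)/(mq)$ with $\lambda(I) = (s(I) - s_-)/(s_+ - s_-) \in (0,1)$. Rationality of $s_\pm, s_1, \dots, s_k$ forces $\lambda(I)$ to lie in a finite set of rationals, a common denominator $D = D_\pi$ of which places all breakpoints of $\phi$ into $\tfrac{1}{Dmq}\Z/\Z$, giving (i). Property~(iv) is immediate from the construction, and property~(iii) follows from the elementary estimate $\|\phi - \pi\|_\infty \leq (s_+-s_-)/(4mq)$, yielding $m_\pi(\epsilon) = O(\epsilon^{-1})$.

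\emph{Symmetry.} The fine intervals pair canonically under $x \mapsto f - x$ into orbits of size $2$, thanks to the choice of $r$. Differentiating $\phi(x) + \phi(f - x) = 1$ gives $\phi'(f - x) = \phi'(x)$, so paired intervals must receive \emph{opposite} patterns: a tent on $I$ forces an inverted tent on $f - I$. With this rule imposed, symmetry at grid points (inherited from $\pi$ via~(iv)) propagates by piecewise linearity to all of $\R/\Z$, establishing~\eqref{eq:minimal:symm}.

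\emph{Subadditivity (the main obstacle).} The grid $\hat G$ induces a finite $2$-dimensional complex on $(\R/\Z)^2$ on whose faces $F$ the function $\pi$ is affine, so $\Delta\pi|_F$ is affine with minimum value $\mu(F) \geq 0$. Call $F$ \emph{loose} if $\mu(F) > 0$ and \emph{tight} otherwise, and set $\gamma = \min\{\mu(F) : F \text{ loose}\} > 0$. The pointwise bound $|\Delta\phi - \Delta\pi| \leq 3(s_+ - s_-)/(4mq)$ implies $\Delta\phi > 0$ on every loose face whenever $m$ exceeds a threshold of order $1/\gamma$. On a tight face, $\Delta\pi \equiv 0$; since $\phi = \pi$ at the vertices of $F$, the deviation $\Delta\phi(x,y) = (\phi - \pi)(x) + (\phi - \pi)(y) - (\phi - \pi)(x+y)$ is a signed sum of three zig-zag bumps supported on the fine intervals containing $x$, $y$, and $x + y$. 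The sign of each bump is determined by the tent/inverted-tent pattern of the corresponding interval, and requiring $\Delta\phi \geq 0$ on each tight face translates into a finite set of explicit sign conditions on the pattern assignment $\sigma$. The crux of the argument will be to exhibit a $\sigma$ simultaneously satisfying (a) the opposite-pattern rule from the symmetry step and (b) the sign conditions on every tight face; I plan to carry this out by reading the required signs off the combinatorial structure of the additive faces of $\pi$ and using that these pair up compatibly under $(x,y) \mapsto (f-x, f-y)$.

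\emph{Conclusion.} The resulting $\phi$ is non-negative, normalized, subadditive, and symmetric, hence minimal by~\eqref{eq:minimal}. Being continuous, piecewise linear, with exactly two slopes, it is extreme for $R_f(\R/\Z)$ by the Gomory--Johnson Two Slope Theorem, yielding (ii) and completing the proof.
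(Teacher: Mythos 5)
Your overall architecture matches the paper's: a direct two-slope zig-zag on the fine grid $\tfrac1{mq}\Z/\Z$ that interpolates $\pi$ at grid points (giving (iv) and (i)), the elementary $L^\infty$ bound $\|\phi-\pi\|_\infty\le(s^+-s^-)/(4mq)$ for (iii), preservation of subadditivity on ``loose'' faces by the perturbation estimate, and the Two Slope Theorem for (ii). However, there is a genuine gap at exactly the point you flag as ``the crux.'' You reduce the problem to exhibiting a tent/inverted-tent assignment $\sigma$ that (a) sends paired intervals under $x\mapsto f-x$ to opposite patterns and (b) satisfies a finite family of sign conditions coming from tight faces of $\Delta\pi$, and then write ``I plan to carry this out.'' That is the entire mathematical content of the subadditivity argument, and it is not done. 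In particular you have not shown such a $\sigma$ \emph{exists}: a priori the sign constraints arising from different additive vertices could conflict with one another or with the symmetry rule.

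The paper resolves this not by an abstract consistency argument but by an explicit rule: assign a tent (upward zig-zag, \CaseM) when $\pi(x)<\tfrac12$, an inverted tent (\CaseW) when $\pi(x)>\tfrac12$, with a tie-break when $\pi(x)=\tfrac12$ depending on whether $\{x\}\in[\tfrac f2,\tfrac{f+1}2]$. Symmetry of this rule is automatic from $\pi(x)+\pi(f-x)=1$. The bulk of the paper's proof (\autoref{lemma:phi-subadditive}, Claims \ref{claim:phi-subadd-F-dim-2}--\ref{claim:phi-subadd-F-b'}) is a detailed case analysis over canonical triangles near each additive vertex $(u,v)$ --- distinguishing lower/upper triangles, whether the additive vertex is a lower-left or upper-left corner, and the combinatorial type of the resulting zig-zag configuration --- showing that this specific rule satisfies every one of the sign conditions you allude to. Several of these subcases (e.g., where one of $\phi(x),\phi(y)$ follows \eqref{eq:i} and the other \eqref{eq:ii}) require nontrivial convexity/concavity arguments on subintervals, not just sign bookkeeping. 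None of this is in your proposal.

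Two further differences worth noting. First, you choose $s_\pm$ with $s_-<\min\pi'\le\max\pi'<s_+$ strictly; the paper takes $s^+=\pi'(0^+)$ and $s^-=\pi'(1^-)$, the actual extreme slopes, which makes the fill-in degenerate (i.e., $\phi=\pi$) precisely on those intervals where $\pi$ already has an extreme slope and is essential in several of the subcase calculations (e.g., arguments using $d^+(x)\ge d^+(x+y)$). Your strict choice is not obviously fatal, but it changes the combinatorics and you would need to reverify. Second, your definition of $r$ only enforces $f/2\in\tfrac1{rq}\Z$; the paper also requires $\tfrac{f+1}2\in\tfrac1{rq}\Z$ \emph{and} a structural condition on the level set $\{\pi=\tfrac12\}$ (\autoref{prop:affine-same-sign}), which is needed to make the $\pi(x)=\tfrac12$ tie-break well-defined on each $\tfrac1{rq}$-cell and to rule out subcase (b\oldstylenums1) in the subadditivity proof. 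Without this condition your pattern rule is not even well-specified where $\pi$ passes through $\tfrac12$ with zero slope.
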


This variant may also be of independent interest.  In particular, it makes a
contribution toward the question regarding the extremality of limits of
extreme functions discussed in \cite[section 6.2]{igp_survey_part_2}.  While
minimality is preserved by taking limits, Dey et al.~\cite[section 2.2,
Example 2]{dey1} constructed a sequence of continuous piecewise linear extreme
functions
% of type \sagefuncwithgraph{gj_2_slope_repeat} 
that converges pointwise to a
non-extreme, discontinuous piecewise linear % minimal valid
function.
% \sagefuncgraph{drlm_gj_2_slope_extreme_limit_to_nonextreme}
Basu et al.\
\cite[section 6.2]{igp_survey_part_2} constructed a sequence of continuous
piecewise linear extreme functions of type \sagefuncwithgraph{bhk_irrational} with
irrational breakpoints that converges uniformly to a non-extreme, continuous
piecewise linear function with rational breakpoints.  Our approximation
theorem implies that extremality is not preserved either under another strengthening
of uniform convergence, even for continuous piecewise linear functions with
rational breakpoints. 
\begin{corollary}
  \label{cor:non-extreme-limit}
  For every minimal non-extreme continuous piecewise linear function $\pi$ for $R_f(\R/\Z)$
  with rational breakpoints and rational values at the breakpoints
  there exists a sequence $\{ \phi_i \}_{i\in\N}$ of
  continuous piecewise linear extreme functions for $R_f(\R/\Z)$ with rational
  breakpoints that converges uniformly to~$\pi$ such that
  \begin{equation*}
    \text{for each $x\in\Q/\Z$, the
      sequence $\{ \phi_i(x) \}_{i\in\N}$ is eventually constant.}
\end{equation*}

\end{corollary}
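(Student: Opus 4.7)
The plan is to apply \autoref{thm:approx-theorem} iteratively with a sequence $\epsilon_i \downarrow 0$, and to choose the refinement parameters $m_i$ carefully so that every rational point $x \in \Q/\Z$ eventually lands on the subgroup $\frac{1}{m_i q}\Z/\Z$ on which $\phi_i$ is forced to coincide with $\pi$ by property~(iv) of the theorem. Concretely, let $q$ be such that $\pi$ has breakpoints in $\frac1q\Z/\Z$, and let $r = r_\pi$, $D = D_\pi$, and $m_\pi(\cdot) = O(\epsilon^{-1})$ be the constants supplied by \autoref{thm:approx-theorem}.

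First, I would fix $\epsilon_i = 1/i$ and choose a strictly increasing sequence $m_i \in \N$ satisfying simultaneously
\begin{enumerate}[\rm(a)]
\item $r \mid m_i$,
\item $m_i \geq m_\pi(\epsilon_i)$,
\item $i! \mid m_i$.
\end{enumerate}
Such $m_i$ trivially exist---for instance any sufficiently large multiple of $\lcm(r, i!)$. Then define $\phi_i = \sage{injective\_2\_slope\_fill\_in}(\pi)$ as the function produced by \autoref{thm:approx-theorem} applied to $\pi$ with parameters $\epsilon_i$ and $m_i$. Properties (i)--(iii) of the theorem immediately give that each $\phi_i$ is a continuous piecewise linear extreme function for $R_f(\R/\Z)$ with rational breakpoints (in $\frac{1}{Dm_iq}\Z/\Z$), and that $\|\phi_i - \pi\|_\infty \leq 1/i$, which yields uniform convergence $\phi_i \to \pi$.

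Second, I would verify the eventual-constancy clause. Given $x \in \Q/\Z$, write $x = a/b$ with $b \in \N$. Condition (c) ensures that for every $i \geq b$ we have $b \mid i! \mid m_i$, hence $b \mid m_iq$, so $x \in \frac{1}{m_iq}\Z/\Z$. Property (iv) of \autoref{thm:approx-theorem} then forces $\phi_i(x) = \pi(x)$ for all $i \geq b$, so the sequence $\{\phi_i(x)\}_{i\in\N}$ is eventually constant (indeed, eventually equal to $\pi(x)$), completing the proof.

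The only substantive point in the plan is the coupling of the three requirements on $m_i$: it must be a multiple of $r$ (to enable application of the theorem), large enough in terms of $\epsilon_i$ (to control the uniform error), and yet divisible by every fixed denominator from some index onward (to guarantee eventual agreement at each rational point). The factorial growth built into (c) resolves this tension at once, since every $b \in \N$ divides $i!$ for all sufficiently large $i$; no deeper obstacle arises, since \autoref{thm:approx-theorem} does all the heavy lifting.
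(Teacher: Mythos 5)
Your proof is correct and essentially identical to the paper's own argument: both apply \autoref{thm:approx-theorem} with $\epsilon_i \to 0$ and a sequence $m_i \geq m_\pi(\epsilon_i)$ divisible by ever more denominators (the paper uses $\lcm(1,\dots,i) \mid m_i$ where you use $i! \mid m_i$, the same idea), then invoke property~(iv) at each fixed rational to get eventual constancy. Your explicit inclusion of the requirement $r \mid m_i$ is a minor improvement in rigor, since the paper's stated conditions only guarantee this for $i \geq r$.
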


\subsection{Structure of the paper}

The construction and the proof of the approximation theorem
(\autoref{thm:approx-theorem}) appear in
sections~\ref{s:preliminaries}--\ref{s:proof}. Our construction is direct and
avoids the use of an additivity-reducing perturbation (step (1) in Basu et
al.'s approximation, yielding~$\pi_{\mathrm{comb}}$).  As a result, our proof
needs to analyze various cases of the structure of the additivities of~$\pi$
to verify that the subadditivity is not violated, in a way that is similar to
the proof of the finite oversampling theorem
in~\cite{basu-hildebrand-koeppe:equivariant}.  We show some examples
in~\autoref{s:examples}.  Corollaries are proved in
\autoref{s:proofs-corollaries}. %% We end our paper with a discussion of open
%% questions for the $k$-row case in \autoref{s:k-row-case-open-questions}.

\section{Preliminaries on approximation}
\label{s:preliminaries}

Let $\pi$ be a continuous minimal valid function for $R_f(\R/\Z)$ that is piecewise
linear with breakpoints in $\frac{1}{q}\Z$ and takes only rational values at
the breakpoints.  We know that $\pi(x) = 0$ for $x = 0$; we can further assume
that $\pi(x) >0$ for any $x \in \R/\Z$, $x \neq 0$, because otherwise $\pi$
(similar to~\autoref{thm:homomorphism}\,(b)) is a
multiplicative homomorphism  of a minimal function that is strictly positive
on $\R/\Z \setminus \{0\}$.
%%% Perhaps cite software-paper software paper  Lemma 5.1 Let   be a piecewise
%%% linear Z-periodic function that is minimal valid. If  (b) = 0 for some b
%%% Z, then b   Q and   is (1/q)Z-periodic, where q denotes the positive
%%% denominator of b written as an irreducible fraction. 

In the remainder of this paper, to simplify notation, we will consider $\pi$
as a $\Z$-periodic function of a real variable.
Given $\epsilon > 0$,
we want to construct a two-slope extreme function $\phi$ such that
$\|\phi - \pi\|_\infty \leq \epsilon$ and $\phi(x) =\pi(x)$ for every
$x \in \frac{1}{q}\Z$.

\begin{lemma}
\label{lemma:approximaition-distance}
Let $\pi$ be a continuous %$\Z$-periodic 
piecewise linear function with breakpoints in $\frac{1}{p}\Z$, where $p \in \Z_{>0}$. 
Let $\phi$ be a two-slope continuous piecewise linear function such that $\phi(x) =\pi(x)$ for every $x \in \frac{1}{p}\Z$. Denote the two slope values of $\phi$ by $s^+$ and $s^-$, where $s^+ > s^-$. 
Then \[\|\phi - \pi\|_\infty \leq \frac{s^+-s^-}{4p}.\] 
\end{lemma}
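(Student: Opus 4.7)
The plan is to work one mesh interval at a time. Since $\pi$ is piecewise linear with breakpoints in $\frac{1}{p}\Z$, it suffices to bound $|\phi(x)-\pi(x)|$ on each interval $I = [a,\, a+\frac{1}{p}]$ with endpoints in $\frac{1}{p}\Z$. On such an $I$, $\pi$ is affine with some slope $t$, and $\phi(a)=\pi(a)$, $\phi(a+\frac{1}{p})=\pi(a+\frac{1}{p})$. The only additional input is that $\phi$ has slopes taken from $\{s^+,s^-\}$; matching the two endpoint values then forces $s^-\le t\le s^+$.

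For $x\in I$, let $A(x)$ denote the total Lebesgue measure of the part of $[a,x]$ on which $\phi'=s^+$, and $B(x)=(x-a)-A(x)$ the part on which $\phi'=s^-$. Integrating $\phi'-t$ from $a$ to $x$ yields the clean identity
\[
  \phi(x)-\pi(x) \;=\; A(x)\,(s^+-t)\;-\;B(x)\,(t-s^-),
\]
a difference of two nonnegative quantities. The endpoint condition at $a+\frac{1}{p}$ pins down the totals: $A(a+\frac{1}{p}) = \alpha := \frac{t-s^-}{p(s^+-s^-)}$ and $B(a+\frac{1}{p}) = \frac{1}{p}-\alpha$. Since $A(x)\le\alpha$ and $B(x)\le\frac{1}{p}-\alpha$, and the identity is a difference of two nonnegatives, we get
\[
  |\phi(x)-\pi(x)| \;\le\; \max\!\bigl(\alpha(s^+-t),\ (\tfrac{1}{p}-\alpha)(t-s^-)\bigr).
\]
A quick substitution shows both arguments of the max equal $\dfrac{(s^+-t)(t-s^-)}{p(s^+-s^-)}$.

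The final step is a one-variable AM\textendash GM bound over $t\in[s^-,s^+]$: $(s^+-t)(t-s^-)\le \bigl(\tfrac{s^+-s^-}{2}\bigr)^2$, with equality at $t=\tfrac{s^++s^-}{2}$. Dividing by $p(s^+-s^-)$ gives the desired bound $\frac{s^+-s^-}{4p}$, uniformly in $x\in I$ and in the choice of interval. The only structural observation to verify is that fixing the common endpoint values together with a two-value slope menu determines the \emph{total length} of each slope type on $I$, even though the arrangement of pieces is free; everything else is routine. I do not anticipate any real obstacle, and the bound is tight: it is attained in the limit of a single slope switch on an interval where $\pi$ has slope exactly $\tfrac{s^++s^-}{2}$.
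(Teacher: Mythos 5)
Your proof is correct and follows essentially the same route as the paper: work interval by interval, observe that $\phi-\pi$ is two-slope with slopes $s^+-t$ and $s^--t$, determine the total measures $\ell^{\pm}$ of each slope type from the two endpoint constraints, bound $|\phi(x)-\pi(x)|$ by $\max\{\ell^+(s^+-t),\,\ell^-(t-s^-)\}=\frac{(s^+-t)(t-s^-)}{p(s^+-s^-)}$, and finish with AM--GM. The only cosmetic difference is that you introduce the running measures $A(x),B(x)$ explicitly and write the exact integral identity before bounding, whereas the paper leaves that monotonicity step implicit.
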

\begin{proof}
The function $\pi$ is affine linear on the interval $[0, \frac{1}{p}]$. Let $s$ be its slope. 
%It suffices to show that $|\phi(x) - \pi(x)| < \frac{s^+-s^-}{4p}$ for any $0 \leq x \leq \frac{1}{p}$. 
%Let $s$ denote the slope value of $\pi$ on $[0, \frac{1}{p}]$.
Let $\psi = \phi - \pi$. 
Then, $\psi$ is piecewise linear with slope values $s^+ - s$ and  $s^- - s$ on the interval $[0, \frac{1}{p}]$. 
%Since $\psi(0) = \psi(\frac{1}{p}) = 0$, $s^- \leq s \leq s^+$. 
Let $\ell^+$ and $\ell^-$ denote the Lebesgue measure of the set of $x \in (0, \frac{1}{p})$ with $\psi'(x) = s^+ - s$ and $\psi'(x) = s^- - s$, respectively. We have $\ell^+ + \ell^- = \tfrac{1}{p}$ and $\big( s^+ - s\big)\ell^+ +\big(s^- - s\big)\ell^- = \psi(\tfrac{1}{p})-\psi(0) = 0$, since $\psi(\tfrac{1}{p})=\psi(0)=0$. Hence, \[\ell^+ =  \frac{1}{p} \left(\frac{s-s^-}{s^+-s^-}\right)  \quad \text{ and } \quad \ell^- = \frac{1}{p} \left(\frac{s^+-s}{s^+-s^-}\right).\] 
For $x \in [0, \frac{1}{p}]$, we have that \[|\psi(x)| \leq \max \big\lbrace(s^+-s)\ell^+, (s-s^-)\ell^- \big\rbrace=\frac{(s^+-s)(s - s^-)}{(s^+-s^-)p} \leq \frac{s^+-s^-}{4p}.\]
The same proof works for any interval $[\frac{i}{p}, \frac{i+1}{p}]$, where $i \in \Z$. Therefore, $\|\phi - \pi\|_\infty \leq (s^+-s^-)/{4p}$.
\end{proof}

\section{Construction of the approximation}
\label{s:construction}

Let $s^+$ and $s^-$ denote the slopes of $\pi$ to the right of $0$ and to the left of $1$, respectively. It follows from the subadditivity of $\pi$ that $s^+$ is the most positive slope and $s^-$ is the most negative slope of $\pi$.  

Let $r = r_\pi$ be a positive integer such that $\pi$ satisfies that
\begin{enumerate} 
\item $\frac{f}{2}, \frac{f+1}{2} \in \frac{1}{rq} \Z$, and
\item if $\pi(x) =\frac{1}{2}$, then either $x \in \frac{1}{rq}\Z$ or $\frac{i}{rq} < x < \frac{i+1}{rq}$ for some $i \in \Z$ with $\pi(\frac{i}{rq})=\pi(\frac{i+1}{rq})=\frac{1}{2}$.
\end{enumerate}  
Such integer $r$ exists because $\pi$ takes only rational values at the breakpoints in $\frac{1}{q}\Z$ by assumption. %% In fact, to ensure the existence of $r$, one only needs to assume that the preimage $\pi^{-1}(\frac{1}{2})$ is the union of intervals and singletons that are rational numbers.
Throughout the paper we will denote intervals of length $\frac1{rq}$ with endpoints in
$\frac1{rq}\Z$ by $I'$, $J'$, $K'$. These intervals form a complex~$\P'$, over
which $\pi$ is piecewise linear.

We define 
\begin{equation}
\label{eq:def-delta}
\delta = \min \{\Delta\pi(x,y) \st x,y \in \tfrac{1}{rq}\Z \text{  and } \Delta\pi(x,y) > 0 \}.
\end{equation}
Let $m \in \Z$ such that 
\begin{equation}
\label{eq:def-n}
m \geq m_\pi(\epsilon) := \frac{3(s^+-s^-)}{4 q \min\{3\epsilon, \delta, 1\}} \quad \text{ and
} \quad m=rn \text{\quad for some integer $n$}.
\end{equation}
From these definitions, it follows that the function $\pi$ satisfies the following property.
\begin{prop}
\label{prop:affine-same-sign}
For every $i \in \Z$, the function $\pi$ is affine linear over $[\frac{i}{rq}, \frac{i+1}{rq}]$, and the sign of $\pi(x)-\frac{1}{2}$ is constant for all $x \in (\frac{i}{rq}, \frac{i+1}{rq})$.  In particular, if $\pi(x)=\frac{1}{2}$ and $x \not\in \frac{1}{rq}\Z$, then $\pi'(x) = 0$. 
\end{prop}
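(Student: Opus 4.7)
The plan is to derive both conclusions directly from the defining properties of $r$ combined with the fact that the breakpoints of $\pi$ lie in $\frac{1}{q}\Z \subseteq \frac{1}{rq}\Z$.

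First, for the affine-linearity claim, I would simply observe that since $\pi$ is piecewise linear with breakpoints in $\frac{1}{q}\Z$, and $\frac{1}{q}\Z \subseteq \frac{1}{rq}\Z$, the refinement $\frac{1}{rq}\Z$ still contains all breakpoints of $\pi$. Hence on every closed interval $[\frac{i}{rq}, \frac{i+1}{rq}]$ there are no interior breakpoints, so $\pi$ is affine linear there.

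Next, for the constant-sign claim, I would argue by contradiction. Fix $i$ and suppose the sign of $\pi(x) - \tfrac{1}{2}$ is not constant on the open interval $(\frac{i}{rq}, \frac{i+1}{rq})$. Since $\pi$ is continuous (and in fact affine) on $[\frac{i}{rq}, \frac{i+1}{rq}]$ by the first part, the intermediate value theorem gives some $x^\star \in (\frac{i}{rq}, \frac{i+1}{rq})$ with $\pi(x^\star) = \tfrac{1}{2}$. Since $x^\star \notin \frac{1}{rq}\Z$, defining property (2) of $r$ forces $\pi(\frac{i}{rq}) = \pi(\frac{i+1}{rq}) = \tfrac{1}{2}$. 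But an affine function that takes the value $\tfrac{1}{2}$ at both endpoints of an interval is identically $\tfrac{1}{2}$ on that interval, contradicting the assumed sign change. Thus the sign is indeed constant.

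Finally, for the "in particular" statement, suppose $\pi(x) = \tfrac{1}{2}$ and $x \notin \frac{1}{rq}\Z$. Then $x$ lies in the open interior of some interval $(\frac{i}{rq}, \frac{i+1}{rq})$, and property (2) of $r$ gives $\pi(\frac{i}{rq}) = \pi(\frac{i+1}{rq}) = \tfrac{1}{2}$. Since $\pi$ is affine on $[\frac{i}{rq}, \frac{i+1}{rq}]$ by the first part and agrees with $\tfrac{1}{2}$ at both endpoints, $\pi \equiv \tfrac{1}{2}$ on that interval, so $\pi'(x) = 0$. There is no substantial obstacle here; the proposition is essentially a direct bookkeeping consequence of how $r$ was chosen, and the only care required is to handle the affine case where $\pi$ might happen to be constant equal to $\tfrac{1}{2}$ on an entire subinterval of $\P'$.
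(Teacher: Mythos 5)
Your proof is correct, and it matches the paper's intent: the paper offers no proof at all (it says the proposition ``follows from'' the definitions of $r$ and $m$), and your argument is a careful unpacking of exactly that direct consequence---breakpoints in $\tfrac1{q}\Z \subseteq \tfrac1{rq}\Z$ give affine linearity, and condition~(2) in the choice of $r$ combined with affineness forces the constant-sign and zero-slope conclusions.
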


For $x\in \R$, 
we use $\{x\}$ to denote the fractional part of $x$, i.e., $\{x\} \in [0,1)$ such that $\{x\} \equiv x \pmod 1$. Let $p$ be a positive integer. We use $\{x\}_{\frac{1}{p}}$ to denote the unique number $y \in [0, \frac{1}{p})$ with $y \equiv x \pmod {\frac{1}{p}}$.  We also define $\lfloor x\rfloor_{\frac{1}{p}} := x- \{x\}_{\frac{1}{p}}$ to be the largest $y \in \tfrac{1}{p}\Z$ such that $y \leq x$, and $\lceil x\rceil_{\frac{1}{p}}$ to be the smallest $y \in \tfrac{1}{p}\Z$ such that $y \geq x$.
 
We define the function $\phi$ as follows. % two-slope piecewise linear 
Let $x\in \R$. If $\{x\}_{\frac{1}{mq}}=0$, then we set $\phi(x) = \pi(x)$. Otherwise, we define $\phi(x)$ by further distinguishing two cases:
\medskip

\noindent
Case (\CaseM): If $\pi(x) < \frac{1}{2}$ or if $\pi(x) = \frac{1}{2}$ and $\{x\} \not\in [\frac{f}{2},\frac{f+1}{2}]$, then
\begin{equation}
\label{eq:i}
%\tag{i}
%\tag{\bigwedge}
\tag{\CaseM}
%\tag{\textsf{M}}
\phi(x) =  \pi(x)+ \min \Big\lbrace \big(s^+ - \pi'(x)\big)\{x\}_{\frac{1}{mq}}, \; \big(\pi'(x)-s^-\big)\left(\tfrac{1}{mq} - \{x\}_{\frac{1}{mq}}\right) \Big\rbrace;
\end{equation}

\noindent
Case (\CaseW): If $\pi(x) > \frac{1}{2}$ or if $\pi(x) = \frac{1}{2}$ and $\{x\} \in [\frac{f}{2},\frac{f+1}{2}]$, then
\begin{equation}
\label{eq:ii}
%\tag{ii}
%\tag{\bigvee}
\tag{\CaseW}
%\tag{\textsf{W}}
\phi(x) =  \pi(x)- \min \Big\lbrace \big(\pi'(x)-s^-\big)\{x\}_{\frac{1}{mq}}, \; \big(s^+ - \pi'(x)\big)\left(\tfrac{1}{mq} - \{x\}_{\frac{1}{mq}}\right) \Big\rbrace.
\end{equation}
For $x \in \R \setminus \frac{1}{mq}\Z$,  define 
\begin{equation}
\label{eq:def-d}
d^+(x) = \frac{1}{mq} \left(\frac{\pi'(x)-s^-}{s^+-s^-}\right) \quad \text{ and } \quad  d^-(x) =\frac{1}{mq} \left(\frac{s^+-\pi'(x)}{s^+-s^-}\right).
\end{equation}
Since $\pi$ only takes rational values on breakpoints, we find $D=D_\pi \in\Z$ so that 
$$\frac{\pi'(x)-s^-}{s^+-s^-} \quad \text{ and } \quad \frac{s^+-\pi'(x)}{s^+-s^-}$$ 
are integer multiples of $\frac1D$ for every $x \in
\R\setminus{\frac1q\Z}$. Then $d^+(x)$ and $d^-(x)$ are integer multiples of
$\frac{1}{Dmq}$.

Concretely, the function $\phi \colon \R \to \R$ constructed above is given by
\begin{equation*}
 \phi(x)  = 
  \begin{cases} 
   \pi(x) & \text{if } \{x\}_{\frac{1}{mq}}=0 \\
   
   \pi(x)+\big(s^+ - \pi'(x)\big)\{x\}_{\frac{1}{mq}}     & \text{if }  \pi(x) < \frac{1}{2}  \text{ and } 0<\{x\}_{\frac{1}{mq}}\leq d^+(x) \\ %\frac{1}{mq} \left(\frac{\pi'(x)-s^-}{s^+-s^-}\right)\\
   \pi(x)+\big(\pi'(x)-s^-\big)\left(\frac{1}{mq} - \{x\}_{\frac{1}{mq}}\right)    & \text{if } \pi(x) < \frac{1}{2}  \text{ and }  \{x\}_{\frac{1}{mq}}> d^+(x) \\%\frac{1}{mq} \left(\frac{\pi'(x)-s^-}{s^+-s^-}\right)\\
   
    \pi(x)-\big(\pi'(x)-s^-\big)\{x\}_{\frac{1}{mq}}     & \text{if }  \pi(x) > \frac{1}{2}  \text{ and } 0<\{x\}_{\frac{1}{mq}}\leq d^-(x) \\ %\frac{1}{mq} \left(\frac{s^+-\pi'(x)}{s^+-s^-}\right)\\
   \pi(x)-\big(s^+-\pi'(x)\big)\left(\frac{1}{mq} - \{x\}_{\frac{1}{mq}}\right)     & \text{if }  \pi(x) > \frac{1}{2}  \text{ and }  \{x\}_{\frac{1}{mq}}> d^-(x) \\ %\frac{1}{mq} \left(\frac{s^+-\pi'(x)}{s^+-s^-}\right)\\
   
     \frac{1}{2} + s^+\cdot \{x\}_{\frac{1}{mq}}   & \text{if } \pi(x)=\frac{1}{2} \text{ and } \{x\} \not\in [\frac{f}{2},\frac{f+1}{2}] \\
     &\text{ and }0< \{x\}_{\frac{1}{mq}}\leq  d^+(x) \\ %\frac{-s^-}{mq(s^+-s^-)}\\
     \frac{1}{2} - s^-\cdot\left(\frac{1}{mq} - \{x\}_{\frac{1}{mq}}\right)  & \text{if } \pi(x)=\frac{1}{2} \text{ and } \{x\} \not\in [\frac{f}{2},\frac{f+1}{2}] \\
     &\text{ and } \{x\}_{\frac{1}{mq}}> d^+(x) \\ %\frac{-s^-}{mq(s^+-s^-)}\\
     
     \frac{1}{2} + s^-\cdot \{x\}_{\frac{1}{mq}}   & \text{if } \pi(x)=\frac{1}{2} \text{ and } \{x\} \in [\frac{f}{2},\frac{f+1}{2}] \\
     & \text{ and } 0< \{x\}_{\frac{1}{mq}}\leq  d^-(x) \\ %\\frac{s^+}{mq(s^+-s^-)}\\
     \frac{1}{2} - s^+\cdot \left(\frac{1}{mq} - \{x\}_{\frac{1}{mq}}\right)  & \text{if } \pi(x)=\frac{1}{2} \text{ and } \{x\} \in [\frac{f}{2},\frac{f+1}{2}] \\
     & \text{ and } \{x\}_{\frac{1}{mq}}>  d^-(x) % \frac{s^+}{mq(s^+-s^-)},
  \end{cases}
\end{equation*}

\section{Proof of Theorem~\ref{thm:approx-theorem}}
\label{s:proof}

\begin{lemma}
\label{lemma:phi-prop}
The function $\phi$ defined above has the following properties.
\begin{enumerate}
\item $\phi$ is $\Z$-periodic.
\item $\phi(x)=0$ for all $x \in \Z$.
\item $\phi$ satisfies the symmetry condition: $\phi(x)+\phi(f-x)=1 $ for all $x\in \R$.
\item $\phi(\frac{i}{mq}) = \pi(\frac{i}{mq})$ for every $i \in \Z$.
\item $\phi$ is continuous piecewise linear and has two distinct slopes values $s^+$ and $s^-$.
\item For any integer $i$, the lengths of the subintervals of $(\frac{i}{mq}, \frac{i+1}{mq})$ where $\phi$ takes positive slope $s^+$ and negative slope $s^-$ are $d^+(x)$ and $d^-(x)$, respectively, where $x$ is any number in $(\frac{i}{mq}, \frac{i+1}{mq})$.
\item %If $\pi$ takes only rational values on breakpoints, then 
$\phi$ has its breakpoints in $\frac1{Dmq}\Z$.
\item For every $i\in\Z$, $\phi(x)$ is either given by equation \eqref{eq:i} for
  all $x \in [\frac{i}{rq}, \frac{i+1}{rq}]$ or by equation~\eqref{eq:ii} for
  all $x \in [\frac{i}{rq}, \frac{i+1}{rq}]$.
  In particular, $\phi(x) - \pi(x)$ is periodic with period $\frac1{mq}$ on each
  interval $[\frac{i}{rq}, \frac{i+1}{rq}]$.
  \label{lemma:phi-prop:quasiperiodic-on-rq}
\end{enumerate}
\end{lemma}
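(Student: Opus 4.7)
The plan is to first prove property~(\ref{lemma:phi-prop:quasiperiodic-on-rq}), which organizes the rest of the argument by showing that $\phi$ is obtained from $\pi$ by a local perturbation that is periodic with period $\frac{1}{mq}$ on each ``block'' $[\frac{i}{rq}, \frac{i+1}{rq}]$. Fix such a block. By \autoref{prop:affine-same-sign}, $\pi$ is affine over the block (so $\pi'$ is constant there) and the sign of $\pi(x)-\frac12$ is constant on its interior; in the boundary case the proposition forces $\pi \equiv \frac12$ on the block, so $\pi' \equiv 0$. Moreover, by condition~(1) in the choice of~$r$, both $\frac{f}{2}$ and $\frac{f+1}{2}$ lie in $\frac{1}{rq}\Z$, so the indicator $\{x\} \in [\frac{f}{2},\frac{f+1}{2}]$ is constant on the interior of the block. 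Consequently the case assignment (\CaseM) vs.\ (\CaseW) is constant there, and the defining formula for $\phi(x)-\pi(x)$ depends on $x$ only through $\{x\}_{\frac{1}{mq}}$, yielding the periodicity.

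Properties (1), (2), (4)--(7) then follow by direct examination. The $\Z$-periodicity~(1) follows from the $\Z$-periodicity of $\pi$, $\pi'$, $\{\,\cdot\,\}_{\frac{1}{mq}}$ (using $mq \in \Z$), and of the conditions distinguishing~(\CaseM) and~(\CaseW). Since $\{0\}_{\frac{1}{mq}}=0$, we have $\phi(0) = \pi(0) = 0$, yielding~(2). All formulas collapse to $\phi(x) = \pi(x)$ when $\{x\}_{\frac{1}{mq}} = 0$, giving~(4) and also continuity across $\frac{1}{mq}\Z$. Within each open interval $(\frac{i}{mq},\frac{i+1}{mq})$, the defining formula is piecewise affine with exactly two slopes, $s^+$ and $s^-$, and the switching location $\frac{i}{mq} + d^{\pm}(x)$ is precisely what makes $\phi$ meet $\pi$ continuously at both endpoints; this yields (5) and (6). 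By the definition of $D = D_\pi$, the numbers $d^{\pm}(x)$ lie in $\frac{1}{Dmq}\Z$, placing all breakpoints of $\phi$ in $\frac{1}{Dmq}\Z$, which is~(7).

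The main obstacle is the symmetry property~(3). For $x \in \frac{1}{mq}\Z$, condition~(1) on $r$ combined with $m = rn$ gives $f \in \frac{1}{mq}\Z$, hence $f - x \in \frac{1}{mq}\Z$, and the identity reduces to the symmetry of~$\pi$. For $x \notin \frac{1}{mq}\Z$, the argument relies on three identities: (a)~$\pi'(f-x) = \pi'(x)$, obtained by differentiating $\pi(x)+\pi(f-x)=1$; (b)~$\{f-x\}_{\frac{1}{mq}} = \frac{1}{mq} - \{x\}_{\frac{1}{mq}}$, from $f \in \frac{1}{mq}\Z$; and (c)~the case-matching lemma: $x$ is in case~(\CaseM) if and only if $f-x$ is in case~(\CaseW). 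The easy half of (c) uses $\pi(x)<\frac12 \Leftrightarrow \pi(f-x)>\frac12$; the delicate half, when $\pi(x)=\frac12$, exploits that the involution $x \mapsto f-x$ on $\R/\Z$ has fixed points $\frac{f}{2}$ and $\frac{f+1}{2}$ and, being orientation-reversing, swaps the two arcs $[\frac{f}{2},\frac{f+1}{2}]$ and its complement. Thus $\{x\}\in [\frac{f}{2},\frac{f+1}{2}]$ if and only if $\{f-x\}\notin [\frac{f}{2},\frac{f+1}{2}]$. Substituting (a) and (b) into the (\CaseM) formula for $\phi(x)$ and the (\CaseW) formula for $\phi(f-x)$, the two $\min$-perturbations coincide in magnitude but appear with opposite signs, giving $\phi(x)+\phi(f-x) = \pi(x)+\pi(f-x) = 1$.
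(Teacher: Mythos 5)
Your proof is correct and follows essentially the same approach as the paper, which disposes of properties (1)--(4) with a single sentence and then spells out the piecewise formulas for (5)--(6). The main added value of your write-up is the careful argument for the symmetry property~(3): the paper states it "follows directly," whereas you make explicit the three ingredients that actually carry the proof, namely $\pi'(f-x)=\pi'(x)$, $\{f-x\}_{1/(mq)}=\tfrac1{mq}-\{x\}_{1/(mq)}$ using $f\in\tfrac1{mq}\Z$, and the case-matching (\CaseM)$\leftrightarrow$(\CaseW), with the boundary-of-arc subtlety for $\pi(x)=\tfrac12$ handled by noting that $x\notin\tfrac1{mq}\Z$ keeps $\{x\}$ away from the fixed points $\tfrac{f}{2},\tfrac{f+1}{2}$ of the involution $x\mapsto f-x$. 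Reorganizing to prove property~(8) first is a sensible streamlining since it immediately makes the case assignment constant on each block of length $\tfrac1{rq}$, but it does not change the substance of the argument.
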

\begin{proof}
The properties (1) to (4) follow directly from the definition of $\phi$ and the fact that $\pi$ is a minimal valid function.
Let $i \in \Z$. 
%Consider $x \in \R$ such that $\{x\}_{\frac{1}{mq}}\neq 0$. 
%Let \[u =  x - \{x\}_{\frac{1}{mq}} \quad \text{ and } \quad v = x - \{x\}_{\frac{1}{mq}} + \tfrac{1}{mq}.\] 
Recall that %for any given integer $i$, 
the function $\pi$ is affine linear on the interval $(\frac{i}{mq}, \frac{i+1}{mq})$. Thus, $d^+$ and $d^-$ are constant functions on the interval $(\frac{i}{mq}, \frac{i+1}{mq})$. Let $x \in (\frac{i}{mq}, \frac{i+1}{mq})$. Then, $\{x\}_{\frac{1}{mq}} = x - \frac{i}{mq} > 0$, $\lfloor x \rfloor_{\frac{1}{mq}} = \frac{i}{mq}$ and $\lceil x \rceil_{\frac{1}{mq}} = \frac{i+1}{mq}$. We have 
\[d^+(x)+d^-(x) = \tfrac{1}{mq}.\] 
It is clear that in Case \eqref{eq:i}, the value $\phi(x)$ satisfies
\begin{equation}
\tag{\CaseM$'$}
\label{eq:phi-case-i}
\phi(x) = 
 \begin{cases} 
 \pi(\lfloor x \rfloor_{\frac{1}{mq}}) + s^+ \cdot \{x\}_{\frac{1}{mq}}  & \text{if } \{x\}_{\frac{1}{mq}}  \leq d^+(x) \\
 \pi(\lceil x \rceil_{\frac{1}{mq}} ) - s^- \cdot \big(\tfrac{1}{mq}-\{x\}_{\frac{1}{mq}}\big) & \text{otherwise}; %, i.e., if } \tfrac{1}{mq} - \{x\}_{\frac{1}{mq}}  \leq d^-(x);
 \end{cases}
\end{equation}
and in Case \eqref{eq:ii}, the value $\phi(x)$ satisfies
\begin{equation}
\tag{\CaseW$'$}
\label{eq:phi-case-ii}
\phi(x) = 
 \begin{cases} 
 \pi(\lfloor x \rfloor_{\frac{1}{mq}}) + s^- \cdot \{x\}_{\frac{1}{mq}} & \text{if } \{x\}_{\frac{1}{mq}} \leq d^-(x) \\
 \pi(\lceil x \rceil_{\frac{1}{mq}}) - s^+ \cdot \big(\tfrac{1}{mq}-\{x\}_{\frac{1}{mq}}\big) & \text{otherwise}. %, i.e., if } \tfrac{1}{mq}-\{x\}_{\frac{1}{mq}} \leq d^+(x).
 \end{cases}
\end{equation}
%\begin{equation}
%\label{eq:phi-case-i}
%\phi(x) = 
% \begin{cases} 
% \pi(\tfrac{i}{mq}) + s^+ \cdot (x-\tfrac{i}{mq}) & \text{if } x- \tfrac{i}{mq} \leq d^+(x) \\
% \pi(\tfrac{i+1}{mq}) - s^- \cdot (\tfrac{i+1}{mq}-x) & \text{otherwise, i.e., if } \tfrac{i+1}{mq}-x \leq d^-(x);
% \end{cases}
%\end{equation}
%and in Case (ii), the value $\phi(x)$ given by \eqref{eq:ii} satisfies
%\begin{equation}
%\label{eq:phi-case-ii}
%\phi(x) = 
% \begin{cases} 
% \pi(\tfrac{i}{mq}) + s^- \cdot (x-\tfrac{i}{mq}) & \text{if } x- \tfrac{i}{mq} \leq d^-(x) \\
% \pi(\tfrac{i+1}{mq}) - s^+ \cdot (\tfrac{i+1}{mq}-x) & \text{otherwise, i.e., if } \tfrac{i+1}{mq}-x \leq d^+(x).
% \end{cases}
%\end{equation}
Therefore, the properties (5) and (6) follow. %the function $\phi$ is piecewise linear with two slopes $s^+$ and $s^-$.
% on the interval $(u,v)$, and the lengths of the positive slope and the negative slope are $d^+(x)$ and $d^-(x)$, respectively.
Property~(7) follows from the definition of $d^+(x)$, $d^-(x)$ and $D$. 
Finally, property~(\ref{lemma:phi-prop:quasiperiodic-on-rq}) follows from the
definition of~$r$ together with \autoref{prop:affine-same-sign}.
\end{proof}

\begin{theorem}
\label{thm:phi-approximates}
The function $\phi$ defined above is an approximation of $\pi$, such that $\|\phi-\pi\|_{\infty} \leq \epsilon$.
\end{theorem}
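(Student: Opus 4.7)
The plan is to reduce the theorem directly to the general bound from Lemma~\ref{lemma:approximaition-distance}, using the structural properties of $\phi$ already established in Lemma~\ref{lemma:phi-prop}.

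First, I would observe that $\phi$ fits the hypotheses of Lemma~\ref{lemma:approximaition-distance} with $p = mq$. Specifically, by Lemma~\ref{lemma:phi-prop}(5), $\phi$ is continuous piecewise linear with exactly two slope values $s^+$ and $s^-$; by Lemma~\ref{lemma:phi-prop}(4), $\phi(\tfrac{i}{mq}) = \pi(\tfrac{i}{mq})$ for every $i \in \Z$. Moreover, since $\pi$ is piecewise linear with breakpoints in $\tfrac{1}{q}\Z \subseteq \tfrac{1}{mq}\Z$, the function $\pi$ is in particular piecewise linear with breakpoints in $\tfrac{1}{mq}\Z$, as required by the lemma.

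Next, I would invoke Lemma~\ref{lemma:approximaition-distance} to conclude
\[
\|\phi - \pi\|_\infty \;\leq\; \frac{s^+ - s^-}{4mq}.
\]

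Finally, I would use the defining inequality \eqref{eq:def-n} for $m$, namely
\[
m \;\geq\; m_\pi(\epsilon) \;=\; \frac{3(s^+-s^-)}{4q\,\min\{3\epsilon,\,\delta,\,1\}} \;\geq\; \frac{3(s^+-s^-)}{4q\cdot 3\epsilon} \;=\; \frac{s^+-s^-}{4q\epsilon},
\]
which gives $\dfrac{s^+-s^-}{4mq}\leq \epsilon$, and the theorem follows. There is essentially no obstacle here: the technical content has already been packaged into Lemma~\ref{lemma:approximaition-distance} and into the verification of Lemma~\ref{lemma:phi-prop}, and the choice of $m_\pi(\epsilon)$ was tailored precisely so that the factor coming from $3\epsilon$ dominates, leaving slack that will later be needed to control the other terms ($\delta$ and $1$) used in the subadditivity and minimality proofs.
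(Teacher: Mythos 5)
Your proposal is correct and takes exactly the route of the paper's own proof: invoke Lemma~\ref{lemma:phi-prop}~(4)--(5) to verify the hypotheses of Lemma~\ref{lemma:approximaition-distance} with $p=mq$, obtain $\|\phi-\pi\|_\infty \le \frac{s^+-s^-}{4mq}$, and then use the lower bound on $m$ from \eqref{eq:def-n} to conclude. The extra remark explaining why $m_\pi(\epsilon)$ contains $\min\{3\epsilon,\delta,1\}$ rather than just $3\epsilon$ is accurate but not part of the paper's argument.
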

\begin{proof}
  By \autoref{lemma:phi-prop} (4)--(5), $\phi$ satisfies the hypotheses of
  \autoref{lemma:approximaition-distance} with $p = mq$.  
  Thus, we have that  $\|\phi-\pi\|_{\infty} \leq \frac{s^+-s^-}{4mq}$. Since $m \geq \frac{s^+-s^-}{4q\epsilon}$ by \eqref{eq:def-n}, the result follows. %we have that $\|\phi-\pi\|_{\infty} \leq \epsilon$.
\end{proof}

\begin{lemma}
\label{lemma:phi-non-negative}
The function $\phi$ defined above is non-negative.
\end{lemma}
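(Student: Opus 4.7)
The plan is to split into the two cases \eqref{eq:i} and \eqref{eq:ii} that define $\phi$, and observe that in each case $\phi$ is obtained from $\pi$ by adding (resp.\ subtracting) a correction that is non-negative on the nose.

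In Case (\CaseM), the correction added to $\pi(x)$ is
$\min\{(s^+ - \pi'(x))\{x\}_{\frac{1}{mq}},\; (\pi'(x) - s^-)(\tfrac{1}{mq} - \{x\}_{\frac{1}{mq}})\}$.
Since $s^+$ and $s^-$ are by definition the most positive and most negative slopes of $\pi$, we have $s^- \le \pi'(x) \le s^+$ wherever $\pi'(x)$ is defined; the fractional-part factors are also non-negative by construction; and when $\pi(x)=\frac12$ the formula reduces to $\frac12 + s^+\{x\}_{\frac{1}{mq}}$ or $\frac12 - s^-(\frac{1}{mq}-\{x\}_{\frac{1}{mq}})$, both manifestly $\ge \frac12$. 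Therefore the added correction is non-negative, and $\phi(x) \ge \pi(x) \ge 0$ (using minimality of $\pi$, which forces $\pi\ge 0$).

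In Case (\CaseW), by the very definition of the case we have $\pi(x) \ge \tfrac{1}{2}$. Here $\phi(x)$ is obtained from $\pi(x)$ by subtracting a non-negative quantity, so the task is to bound this subtraction. The plan is to invoke \autoref{lemma:approximaition-distance} with $p = mq$: this is applicable because $\pi$ is piecewise linear with breakpoints in $\tfrac{1}{rq}\Z \subseteq \tfrac{1}{mq}\Z$ (since $m=rn$ by \eqref{eq:def-n}) and, by \autoref{lemma:phi-prop} parts~(4) and~(5), $\phi$ is a two-slope continuous piecewise linear function that agrees with $\pi$ on $\tfrac{1}{mq}\Z$. The lemma then yields $\|\phi - \pi\|_\infty \le \frac{s^+ - s^-}{4mq}$. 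The choice of $m$ in \eqref{eq:def-n} gives $mq \ge \frac{3(s^+-s^-)}{4\min\{3\epsilon,\delta,1\}} \ge \frac{3(s^+-s^-)}{4}$, so $\|\phi-\pi\|_\infty \le \tfrac{1}{3}$. Combined with $\pi(x) \ge \tfrac{1}{2}$, this yields $\phi(x) \ge \tfrac{1}{2} - \tfrac{1}{3} = \tfrac{1}{6} > 0$.

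There is no serious obstacle here; the only subtlety is that the assignment $\pi(x) = \tfrac{1}{2}$ is split between the two cases depending on whether $\{x\} \in [\tfrac{f}{2}, \tfrac{f+1}{2}]$, but this split is already baked into the definitions of (\CaseM) and (\CaseW), so the analysis above handles all $x$ uniformly. The argument therefore reduces to the two short paragraphs above.
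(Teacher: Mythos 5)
Your proposal is correct and takes essentially the same approach as the paper: handle the ``adding a non-negative correction'' side trivially via $\phi(x)\ge\pi(x)\ge 0$, and handle the remaining side (where $\pi(x)\ge\tfrac12$) by invoking \autoref{lemma:approximaition-distance} together with the lower bound on $m$ from \eqref{eq:def-n} to get $\|\phi-\pi\|_\infty\le\tfrac13$. The only cosmetic difference is that you split by the construction cases (\CaseM)/(\CaseW) while the paper splits directly on whether $\pi(x)<\tfrac12$; both partitions lead to the identical argument.
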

\begin{proof}
Let $x \in \R$. If $\pi(x) < \frac{1}{2}$, then by the definition of $\phi$, we have that $\phi(x) \geq \pi(x) \geq 0$. Now assume that $\pi(x) \geq \frac{1}{2}$. Let $p = mq$. Then $p \geq \frac{3}{4}(s^+-s^-)$ by \eqref{eq:def-n}. It follows from \autoref{lemma:approximaition-distance} that $|\phi(x) -\pi(x)| \leq \frac{1}{3}$. Therefore, $\phi(x) \geq 0$. The function $\phi$ is non-negative.
\end{proof}

\begin{lemma}
\label{lemma:phi-subadditive}
The function $\phi$ defined above is subadditive.
\end{lemma}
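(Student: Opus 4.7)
The plan is to verify $\Delta\phi(x,y) := \phi(x)+\phi(y)-\phi(x+y) \geq 0$ for every $x,y \in \R$ by the decomposition
$\Delta\phi(x,y) = \Delta\pi(x,y) + \Delta(\phi-\pi)(x,y)$
and analyzing the sign of the sum based on the face of the 2D polyhedral complex in the $(x,y)$-plane induced by $\frac{1}{rq}\Z^2$ together with the lines $x+y = \text{const}$. On each such triangular face $F$, the three maps $x\mapsto\pi(x)$, $y\mapsto\pi(y)$, and $(x,y)\mapsto\pi(x+y)$ are affine, so $\Delta\pi$ is affine on $F$ and attains its minimum over $F$ at a vertex of $F$, which lies in $\frac{1}{rq}\Z^2$.

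The easy case is when no vertex of $F$ is additive. Then by the definition of~$\delta$ in \eqref{eq:def-delta} and the affineness of $\Delta\pi$ on $F$, we have $\Delta\pi \geq \delta$ throughout $F$. Applying \autoref{lemma:approximaition-distance} with $p=mq$, together with \eqref{eq:def-n}, gives $\|\phi-\pi\|_\infty \leq (s^+-s^-)/(4mq) \leq \delta/3$; the triangle inequality then yields $|\Delta(\phi-\pi)(x,y)| \leq \delta$, and $\Delta\phi(x,y) \geq 0$ follows.

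The substantive case is when $F$ has at least one additive vertex, so that $\Delta\pi$ can be arbitrarily close to $0$ in the interior and the structure of $\phi-\pi$ must be invoked. I would first treat the ``pure'' additivity face, where $\Delta\pi \equiv 0$ on all of $F$: on such a face, the slopes satisfy $\pi'(x) = \pi'(y) = \pi'(x+y)$, so the amplitudes $s^+-\pi'(\cdot)$ and $\pi'(\cdot)-s^-$ appearing in \eqref{eq:phi-case-i} and \eqref{eq:phi-case-ii} agree across $x$, $y$, and $x+y$. Setting $u=\{x\}_{\frac{1}{mq}}$, $v=\{y\}_{\frac{1}{mq}}$, $w=\{x+y\}_{\frac{1}{mq}}$, we have either $w=u+v$ (no carry) or $w=u+v-\tfrac{1}{mq}$ (carry). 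I would then enumerate the combinations of Case \eqref{eq:i} vs.\ \eqref{eq:ii} and the two subcases of each piecewise formula for $x, y, x+y$, and verify in each compatible combination that $\Delta(\phi-\pi)(x,y) \geq 0$; when the three points lie in consistent subcases, this reduces to a linear identity in $u,v,w$ which vanishes in the no-carry case and is strictly positive in the carry case. The enumeration remains finite because several configurations are forbidden: for instance, $\pi(x),\pi(y)>\tfrac12$ on an additivity face would force $\pi(x+y)=\pi(x)+\pi(y)>1$, and the choice of $r$ with $\tfrac{f}{2}, \tfrac{f+1}{2} \in \tfrac{1}{rq}\Z$ together with \autoref{prop:affine-same-sign} aligns the ``level-$\tfrac12$'' exception in the Case \eqref{eq:i}/\eqref{eq:ii} distinction with the complex $\P'$.

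For mixed faces with some but not all vertices additive, I would interpolate between the two arguments above: the affine function $\Delta\pi$ grows from $0$ at the additive vertex to at least $\delta$ at a non-additive one, and a parallel analysis shows that $\Delta(\phi-\pi)$ cannot decrease faster along that direction, combining the explicit formulas for $\phi-\pi$ on each side of the $\frac{1}{mq}$-subcells with the pointwise bound $\|\phi-\pi\|_\infty \leq \delta/3$. The main obstacle throughout is the case enumeration on additivity faces --- handling the carry between $u+v$ and $w$ and checking that the Case designations at $x,y,x+y$ are mutually compatible --- analogous to the case analysis in the finite oversampling theorem referenced in the introduction of this paper.
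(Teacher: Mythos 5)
Your reduction to the triangular complex at scale $\frac{1}{rq}$ and the treatment of strictly subadditive faces via the bound $\|\phi-\pi\|_\infty \leq \delta/3$ both match the paper's argument (Claim~\ref{claim:phi-subadd-F-strictly-subadd}). But your handling of faces with an additive vertex has a real gap. The paper's proof hinges on Observation~\ref{obs:shifting-back-in-tangent-cone}: because $\phi - \pi$ is $\frac1{mq}$-periodic on each interval of the complex~$\P'$ (Lemma~\ref{lemma:phi-prop}\,(\ref{lemma:phi-prop:quasiperiodic-on-rq})), the quantity $\Delta\phi - \Delta\pi$ is invariant under translations by $(s,t) \in \frac1{mq}\Z^2$ within a triangle $F'$, which gives $\Delta\phi(x+s,y+t) = \Delta\phi(x,y) + \Delta\pi(u+s,v+t) \geq \Delta\phi(x,y)$ whenever $(u,v)$ is an additive vertex. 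This reduces the whole of $F'$ to one canonical $\frac1{mq}$-sized triangle at the vertex. Your substitute --- that $\Delta\pi$ grows affinely from $0$ and ``a parallel analysis shows that $\Delta(\phi-\pi)$ cannot decrease faster along that direction'' --- is asserted, not proved; the $\|\phi-\pi\|_\infty \leq \delta/3$ bound controls only $|\Delta(\phi-\pi)| \leq \delta$ pointwise, not its rate of change, and $\Delta(\phi-\pi)$ is in fact periodic and oscillating, not monotone in any direction.

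A second, related issue: your explicit case enumeration is carried out only on ``pure additivity'' faces, where $\Delta\pi \equiv 0$ on all of~$F$. That forces $\pi'(x)=\pi'(y)=\pi'(x+y)$ and makes the amplitudes in \eqref{eq:phi-case-i}--\eqref{eq:phi-case-ii} coincide, which is why your computation collapses to a clean linear identity in $u,v,w$. But this is a degenerate special case. After the translation reduction, the canonical triangle near an additive vertex $(u,v)$ generically has $\Delta\pi(u,v)=0$ with \emph{unequal} slopes on $I'$, $J'$, $K'$; the paper (Claims~\ref{claim:phi-subadd-F-dim-2}--\ref{claim:phi-subadd-F-b'}) derives slope \emph{inequalities} (e.g.\ $\pi'(x+y)\leq \pi'(y)$) from additivity at $(u,v)$ combined with subadditivity, and the case analysis over lower-left vs.\ upper-left corners (Cases (a), (a'), (b), (b')) and the various \eqref{eq:i}/\eqref{eq:ii} combinations rests essentially on those inequalities. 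Your ``mixed face'' discussion does not address this regime, so the argument as proposed does not close.
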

\begin{proof}
Since $\phi$ is a $\Z$-periodic function by \autoref{lemma:phi-prop}-(1), it suffices to show that $\Delta\phi(x,y)\geq 0$ for any $x, y \in [0,1)$. 
If $x=0$ or $y=0$, then we have that $\Delta\phi(x,y) = 0$, since $\phi(0) =0$ by \autoref{lemma:phi-prop}-(2).
%If $(x,y) \in \frac{1}{mq}\Z \times \frac{1}{mq}\Z$, then $\Delta\phi(x,y) = \Delta\pi(x,y) \geq 0$ by the subadditivity of $\pi$. In the following, we consider $(x,y) \not\in \frac{1}{mq}\Z \times \frac{1}{mq}\Z$.

It is known from \cite{basu-hildebrand-koeppe:equivariant} that the square
$[0,1]^2$ is tiled by the lower and upper triangles%
\begin{equation}
  \label{eq:face-f-prime}
  F' = F(I',J',K') = \big\lbrace\,
  (x,y) \in [0,1]^2 \,\st\, 
  x \in I', \; y \in J', \; x + y \in K'\,\big\rbrace,
\end{equation}
where $I', J', K'$ are closed proper intervals of length $\frac1{rq}$ with
endpoints in $\frac1{rq}\Z$.  We note that $\pi$ is affine linear over each of
the intervals, and thus $\Delta\pi$ is affine linear over $F'$.  The triangles
$F'$ are the maximal faces of a polyhedral complex $\Delta\P'$.

It suffices to prove that $\Delta\phi$ is nonnegative over all lower
triangles: The case of upper triangles can be transformed by the map
$(x, y) \mapsto -(x, y)$, since $\pi$ being a minimal valid function for
$R_{f}(\R/\Z)$ implies that the function $x \mapsto \pi(-x)$ is minimal valid
for $R_{1-f}(\R/\Z)$, and the function $\phi$ is covariant under this map as
well.\smallbreak

In the following, we fix a lower triangle $F' = F(I', J', K')$. 
If $\pi$ is strictly subadditive over all of $F'$, we easily prove the
subadditivity of $\phi$.
\begin{claim}
  \label{claim:phi-subadd-F-strictly-subadd}
  If $\Delta\pi(u,v)> 0$ for every vertex $(u, v)$ of $F'$, then $\Delta\phi(x,y)\geq 0$.
\end{claim}

\begin{proof}
By the definition \eqref{eq:def-delta} of $\delta$, we have that $\Delta\pi(u,v) \geq \delta$ on every vertex $(u,v)$ of $F'$. Then, $\Delta\pi(x,y)\geq \delta$, since $\Delta\pi$ is affine linear over $F'$. Using \autoref{lemma:approximaition-distance} with $p = mq = rnq$ and \eqref{eq:def-n}, we obtain  that $|\phi(x,y)-\pi(x,y)| \leq \frac{\delta}{3}$. It follows that $\Delta\phi(x,y) \geq 0$.
\end{proof}

\DIFFPROTECT{%
Otherwise, there is a vertex $(u, v)$ of $F'$ with $\Delta\pi(u,v)=0$.
The triangle $F'$ is tiled by smaller lower and upper triangles 
\begin{equation}
  \label{eq:face-f}
  F(I,J,K) = \big\lbrace\,
  (x, y) \in [0,1]^2 \,\st\, 
  x \in I, \; y \in J, \; x + y \in K\,\big\rbrace,
\end{equation}
where $I, J, K$ are closed proper intervals of length $\frac1{mq}$ with
endpoints in $\frac1{mq}\Z$.  Again the triangles $F(I,J,K)$ form the maximal faces
of a polyhedral complex $\Delta\P$, which is a refinement of $\Delta\P'$.
Moreover, 
by
\autoref{lemma:phi-prop}-(\ref{lemma:phi-prop:quasiperiodic-on-rq}), $\phi -
\pi$ is periodic with period $\frac1{mq}$ on each of $I'$, $J'$, and $K'$. 
Thus, within $F'$, the function $\Delta\phi-\Delta\pi$ is invariant under translations by $(s,t)$ where
$s, t \in \frac1{mq}\Z$:
$$\Delta\phi(x,y) - \Delta\pi(x,y) 
= \Delta\phi(x + s, y + t) - \Delta\pi(x + s, y + t).$$
This implies the following.
\begin{observation}\label{obs:shifting-back-in-tangent-cone}
  Let $(u, v)$ be a vertex of $F'$ with $\Delta\pi(u,v)=0$.
  Let $F = F(I,J,K) \subseteq F'$ be a triangle as in~\eqref{eq:face-f}
  and $s, t \in \frac1{mq}\Z$ such that
  $(u, v) + (s,t) \in F'$ 
  and $F + (s,t) \subseteq F'$. 
  Then for $(x, y) \in F$, %% $$\Delta\phi(x+s,y+t) - \Delta\pi(x+s,y+t) = \Delta\phi(x,y) - \Delta\pi(x,y)$$
  %% and hence
  \begin{equation}\label{eq:shifting-back-in-tangent-cone}
    \begin{aligned}
      \Delta\phi(x+s, y+t) &= \Delta\phi(x, y) + \Delta\pi(x+s,y+t) - \Delta\pi(x,y) \\
      &= \Delta\phi(x, y) + \Delta\pi(u+s, v+t) - \Delta\pi(u, v) \\
      &= \Delta\phi(x, y) + \Delta\pi(u+s, v+t).
    \end{aligned}
  \end{equation}
\end{observation}}

Because $\Delta\pi\geq0$ in \eqref{eq:shifting-back-in-tangent-cone}, it
will suffice to prove $\Delta\phi(x,y) \geq 0$ when $(x,y)$ lies in 
some canonical triangle $F$ near the additive vertex $(u,v)$.  
We distinguish the following cases; see Figures~\ref{fig:phi-subadd-F-a-aprime}
and \ref{fig:phi-subadd-F-b-bprime}.

\textbf{Case (a).} $(u,v)$ is the lower left corner of the lower triangle
$F'$.  Define the canonical lower triangle $F
%= F([u, u+\frac1{mq}], [v, v+\frac1{mq}], [u+v, u+v+\frac1{mq}])
= \conv\{(u,v), (u+\frac1{mq},v), (u, v+\frac1{mq})\}$.
Then every lower triangle $F(I,J,K) \subseteq F'$ arises as $F + (s,t)$
according to \autoref{obs:shifting-back-in-tangent-cone}. 

\textbf{Case (a').} $(u,v)$ is the lower left corner of the lower triangle
$F'$.  Define the canonical upper triangle $F 
%= F([u, u+\frac1{mq}], [v, v+\frac1{mq}], [u+v+\frac1{mq}, u+v+\frac2{mq}]) 
= \conv\{(u+\frac1{mq},v), (u, v+\frac1{mq}), (u+\frac1{mq}, v+\frac1{mq})\}$. 
Then every upper triangle $F(I,J,K) \subseteq F'$ arises as $F + (s,t)$
according to \autoref{obs:shifting-back-in-tangent-cone}. 

\textbf{Case (b).} $(u,v)$ is the upper left corner of the lower triangle
$F'$.
Define the canonical lower triangle $F
%= F([u, u+\frac1{mq}], [v-\frac1{mq}, v], [u+v-\frac1{mq}, u+v])
= \conv\{(u,v), (u,v-\frac1{mq}), (u+\frac1{mq}, v-\frac1{mq})\}$.
Then every lower triangle $F(I,J,K) \subseteq F'$ arises as $F + (s,t)$
according to \autoref{obs:shifting-back-in-tangent-cone}. 

\textbf{Case (b').} $(u,v)$ is the upper left corner of the lower triangle
$F'$.  Define the canonical upper triangle $F
= \conv\{(u,v-\frac1{mq}), (u+\frac1{mq}, v-\frac1{mq}), (u+\frac1{mq}, v-\frac2{mq}\}$.
Then every upper triangle $F(I,J,K) \subseteq F'$ arises as $F + (s,t)$
according to \autoref{obs:shifting-back-in-tangent-cone}. 

The case that the additive vertex $(u,v)$ is the lower right corner can be reduced to Cases (b)
and (b'), respectively, since $\Delta\phi$ is invariant under the map
$(x,y) \mapsto (y,x)$.

\begin{figure}[t]
  \centering
  \includegraphics[width=.45\linewidth]{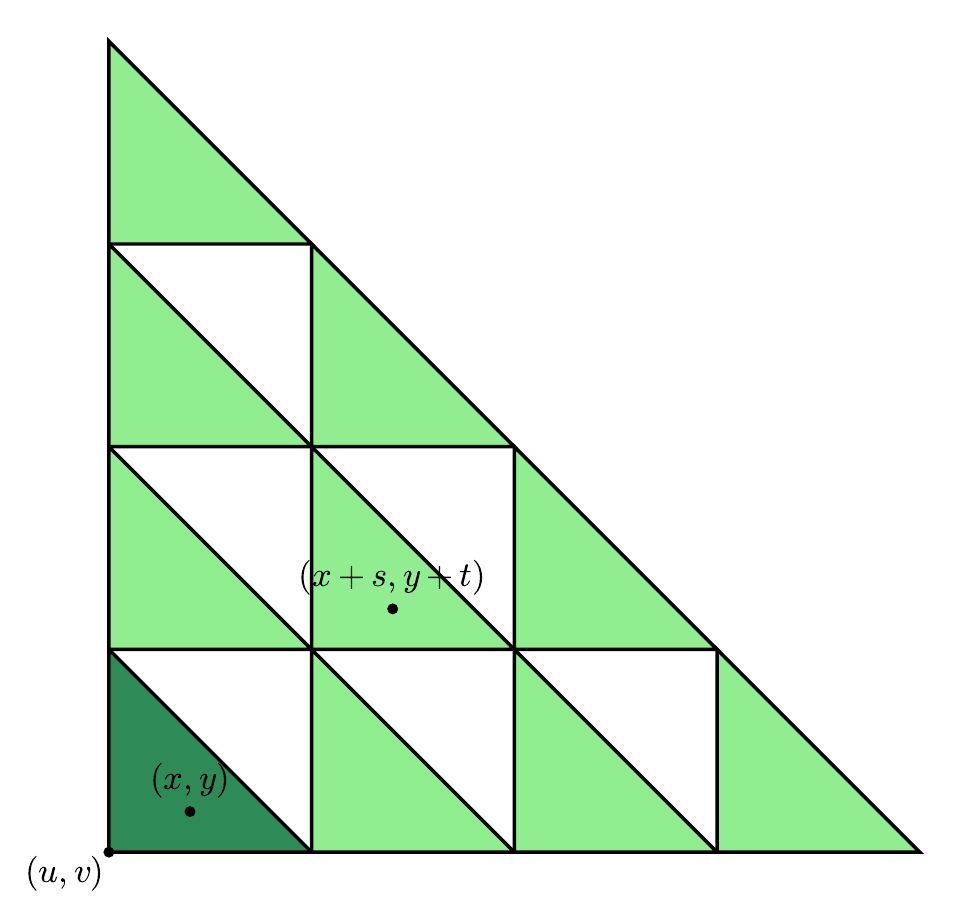}\hfill
  \includegraphics[width=.45\linewidth]{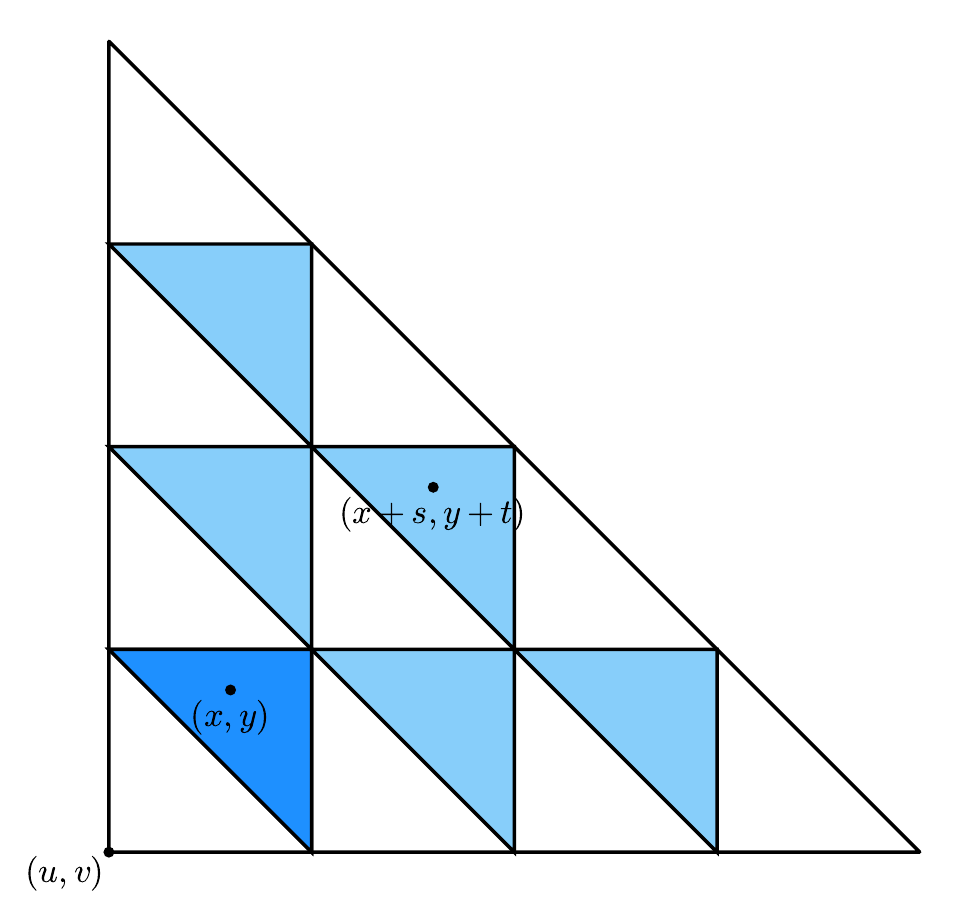}
  \caption{Cases (a) and (a') in the proof of \autoref{lemma:phi-subadditive}
    (Claims \ref{claim:phi-subadd-F-dim-2} and \ref{claim:phi-subadd-F-a'}).
    The canonical triangle~$F$ is \emph{shaded dark}, its translations
    $F+(s,t)$ within the larger triangle $F'$ according
    to \autoref{obs:shifting-back-in-tangent-cone} are \emph{shaded lighter}.
}
  \label{fig:phi-subadd-F-a-aprime}
\end{figure}
\begin{claim}
  \label{claim:phi-subadd-F-dim-2}
  For the canonical lower triangle $F$ of Case~(a), 
  % If $\Delta\pi=0$ on a vertex of $F$, then 
  $\Delta\phi(x,y)\geq 0$ for $(x, y) \in F$.
\end{claim}
\begin{proof}
It suffices to prove the claim for $(x, y) \in \intr(F)$; it extends to the
boundary by continuity.  So we have 
\begin{equation}
\label{eq:case-a}
u < x < u +\tfrac{1}{mq}, \quad v < y < v+\tfrac{1}{mq} \quad  \text{ and  } \quad u+v < x+y < u+v+\tfrac{1}{mq}.
\end{equation}
We note that 
$ \{ x \}_{\frac1{mq}} = x - u $, $ \{ y \}_{\frac1{mq}} = y - v$, and $ \{ x
+ y \}_{\frac1{mq}} = x + y - (u+v)$.

From the additivity $\pi(u+v) = \pi(u) + \pi(v)$ and the subadditivity
$\pi(u+v+\frac1{mq}) \leq \pi(u) + \pi(v+\frac1{mq})$ we obtain
$\pi'(x+y) = mq\bigl(\pi(u+v+\frac1{mq}) - \pi(u+v)\bigr) \leq mq\bigl(\pi(v+\frac1{mq}) -
\pi(v)\bigr) = \pi'(y)$. Likewise, $\pi'(x+y) \leq \pi'(y)$ follows.
Therefore, 
\[d^+(x)\geq d^+(x+y) \quad \text{ and }\quad d^+(y)\geq d^+(x+y);\]
\[d^-(x)\leq d^-(x+y) \quad \text{ and }\quad d^-(y)\leq d^-(x+y).\]
By the symmetry condition, we may assume that $\pi(u) \leq \pi(v)$. Since
$\pi(u)+\pi(v)=\pi(u+v)\leq 1$, we have either (a\oldstylenums0)
$\pi(u)=\pi(v)=\frac{1}{2}$, or (a\oldstylenums1--a\oldstylenums4) $\pi(u)<\frac{1}{2}$. 
In subcase (a\oldstylenums0), $\pi(u+v)=1$, implying that $\{u+v\}=f$. We know
that the function $\pi$ has slope $s^-$ on the interval $(f, f+\frac{1}{mq})$,
and so has the function $\phi$. Thus, $\phi(x+y)=\pi(u+v)+s^-\cdot \{ x+y
\}_{\frac{1}{mq}}$. We also know that $\phi(x) \geq \pi(u)+s^-\cdot \{ x
\}_{\frac{1}{mq}}$ and $\phi(y) \geq \pi(v)+s^-\cdot \{ y
\}_{\frac{1}{mq}}$. Therefore, $\Delta\phi(x,y) \geq \Delta\pi(x,y) \geq
0$. In the latter case (a\oldstylenums1--a\oldstylenums4), we know that $\phi(x)$ is given by equation \eqref{eq:phi-case-i} in Case \eqref{eq:i}. If $\phi(y)$ is also given by equation \eqref{eq:phi-case-i} in Case \eqref{eq:i}, then $\phi$ has slope $s^+$ on the intervals $\big( u, u+d^+(x) \big)$ and $\big( v, v+d^+(y)\big)$ with $d^+(x)\geq d^+(x+y)$ and $d^+(y)\geq d^+(x+y)$. 
It follows that
\begin{enumerate}
\item[(a\oldstylenums1)] when $\{ x+y \}_{\frac{1}{mq}} \leq d^+(x+y)$, 
  then the length of the subinterval of $[u+v, x+y]$ where $\phi$ has
  slope~$s^+$ is at most $\{ x + y \}_{\frac{1}{mq}}$ (with equality when $\phi(x+y)$ is obtained by
  Case~\eqref{eq:i}),
  and we can estimate $\phi(x+y) \leq\pi(u+v)+s^+\cdot \{ x+y \}_{\frac{1}{mq}}$. 
  Moreover, $\phi(x)=\pi(u)+s^+\cdot \{ x\}_{\frac{1}{mq}}$ and
  $\phi(y)=\pi(v)+s^+\cdot \{ y \}_{\frac{1}{mq}}$, which imply that
  $\Delta\phi(x,y)\geq \Delta\pi(u,v) = 0$. 
\end{enumerate}
On the other hand,
\begin{enumerate}
\item[(a\oldstylenums2)] when $\{ x+y \}_{\frac{1}{mq}}> d^+(x+y)$, we
  estimate as follows:  First, the
  length of the subinterval of $[u+v, x+y]$ where $\phi$ has slope~$s^+$ is at
  most $d^+(x+y)$ (with equality when $\phi(x+y)$ is obtained by
  Case~\eqref{eq:i}), and hence we have $\phi(x+y) \leq\pi(u+v)+s^+\cdot
  d^+(x+y)+s^- \cdot \big(\{ x+y \}_{\frac{1}{mq}} - d^+(x+y)\big)$.
  Next, we show that the sum of the 
  lengths of the subintervals of $[u,x]$ and $[v,y]$ on which $\phi$ has slope $s^+$
  is at least $d^+(x+y)$.
  If $\{ x \}_{\frac{1}{mq}}>d^+(x) \geq d^+(x+y)$ or $\{ y
  \}_{\frac{1}{mq}}>d^+(y) \geq d^+(x+y)$, this holds. 
  Otherwise, $\phi$ has slope $s^+$ on the whole intervals $[u,x]$ and
  $[v,y]$, 
  so the sum of the lengths is $\{ x \}_{\frac{1}{mq}} + \{ y
  \}_{\frac{1}{mq}} = \{ x + y \}_{\frac{1}{mq}} > d^+(x+y)$. 
  Thus we can estimate $\phi(x)+\phi(y) \geq\pi(u)+\pi(v)+s^+\cdot d^+(x+y)+s^- \cdot \big(\{ x \}_{\frac{1}{mq}} +\{ y \}_{\frac{1}{mq}}  - d^+(x+y)\big)$. Therefore, $\Delta\phi(x,y) \geq 0$ also holds.
\end{enumerate}
Now assume that $\phi(y)$ is given by equation \eqref{eq:phi-case-ii} in Case \eqref{eq:ii}. We know that $\pi(v)\geq \frac{1}{2}$. If $\pi(u) = 0$, then $\{u\}=0$. This implies that $\pi$ has slope $s^+$ on the interval $(u, u+\frac{1}{mq})$, and so does $\phi$. We have on the one side $\phi(x) = \pi(u) + s^+\cdot \{ x\}_{\frac{1}{mq}}=s^+\cdot \{ x\}_{\frac{1}{mq}}$. On the other side, $\phi(x+y)-\phi(y) = \phi( x-u+y) -\phi(y) \leq s^+\cdot \{ x\}_{\frac{1}{mq}}$. It follows that $\Delta\phi(x,y) \geq 0$, when $\pi(u)=0$. If  $\pi(u) > 0$, then $\pi(u+v) = \pi(v)+\pi(u) > \frac{1}{2}$, which implies that $\phi(x+y)$ is also given by equation \eqref{eq:phi-case-ii} in Case \eqref{eq:ii}. Notice that $\phi$ has slope $s^-$ on the intervals $\big(v, v+d^-(y)\big)$ and  $\big(u+v, u+v+d^-(x+y)\big)$ with  $d^-(x+y) \geq d^-(y)$, and that $\phi$ has slope $s^+$ on the interval $\big(u, u+d^+(x)\big)$ with $d^+(x) \geq d^+(x+y)$. It follows that
\begin{enumerate}
\item[(a\oldstylenums3)] when $ \{ x+y \}_{\frac{1}{mq}} \leq d^-(x+y)$, we
  have $\phi(x+y)=\pi(u+v)+s^-\cdot  \{ x+y \}_{\frac{1}{mq}}$, and we can
  estimate $\phi(x)+\phi(y) \geq\pi(u) + s^- \cdot  \{ x\}_{\frac{1}{mq}}
  +\pi(v)+s^-\cdot  \{ y\}_{\frac{1}{mq}}$.
\end{enumerate}
On the other hand,
\begin{enumerate}
\item[(a\oldstylenums4)] when $ \{ x+y \}_{\frac{1}{mq}} > d^-(x+y)$, we have
  $\phi(x+y) =\pi(u+v)+s^-\cdot d^-(x+y)+s^+\cdot \big( \{ x+y
  \}_{\frac{1}{mq}} - d^-(x+y)\big)$.
  Next, we show that the sum of the lengths of the subintervals
  of $[u, x]$ and $[v, y]$ on which $\phi$ has slope~$s^-$ is at most $d^-(x+y)$.
  If $\{ x \}_{\frac{1}{mq}} < d^+(x)$, then $\phi$ has slope $s^+$ on all of
  $[u,x]$, and the sum of the lengths has the upper bound $d^-(y) \leq d^-(x+y)$.
  If $\{ x \}_{\frac{1}{mq}} \geq d^+(x) \geq d^+(x+y)$, then $\phi$ has slope
  $s^+$ on $[u, u+d^+(x+y)]$, and the sum of the lengths has the
  upper bound $\{ x \}_{\frac{1}{mq}} + \{ y \}_{\frac{1}{mq}} - d^+(x+y) 
  = \{ x+y \}_{\frac{1}{mq}} - d^+(x+y) \leq \frac1{mq} -  d^+(x+y) = d^-(x+y)$.
  Thus we can estimate
  $\phi(x)+\phi(y) \geq\pi(u)+\pi(v)+s^-\cdot d^-(x+y)+s^+ \cdot \big(\{ x \}_{\frac{1}{mq}} +\{ y \}_{\frac{1}{mq}}- d^-(x+y)\big)$. 
\end{enumerate} 
In either case, $\Delta\phi(x,y)\geq 0$ holds. %% Thus, we conclude the proof of
%% Case (a).
\end{proof}

For Case (a'), because the upper triangle $F$ shares each its edges with
lower triangles, which fall into Case (a), we have that $\Delta\phi \geq 0$ on the boundary of~$F$. 
Consider the arrangement of hyperplanes (lines) $x = b$, $y = b$, and $x + y = b$,
where $b$ runs through the breakpoints of the function~$\phi$.  The cells of
this arrangement define a polyhedral complex $\Delta\P^\phi$, which is a
refinement of the complex~$\Delta\P$.  Then $\Delta\phi$ is affine linear on each
maximal face of $\Delta\P^\phi$.
Thus it suffices to show the following.
\begin{claim}\label{claim:phi-subadd-F-a'}
  Let $F$ be the canonical upper triangle of Case (a').  Then
  $\Delta\phi \geq 0$ on all vertices of the complex $\Delta\P^\phi$ that lie
  in $\intr(F)$.
\end{claim}
\begin{proof}
  \DIFFPROTECT{%
  We have $F = F(I,J,K)$ for $I = [u, u+\frac1{mq}]$, $J = [v, v+\frac1{mq}]$,
  and $K = [u+v+\frac1{mq}, u+v+\frac2{mq}]$. 
  Let $s_1, s_2$ and $s_3$ denote the slopes of $\pi$ on the intervals $I, J$ and $K$, respectively.
  Let $\bx$, $\by$, and $\bz$ denote the unique breakpoints of $\phi$ in $\intr(I)$,
  $\intr(J)$, and $\intr(K)$, respectively.  Then the only vertices
  of $\Delta\P^\phi$ that can lie in $\intr(F)$ are $(\bar x, \bar y)$, $(\bar x, \bar z-\bar x)$, and
  $(\bar z-\bar y, \bar y)$.  The breakpoint $\bar x$ splits $I$ into an interval $I^+$, on which
  $\phi$ has slope $s^+$, and $I^-$, on which $\phi$ has slope $s^-$. Likewise, we
  obtain intervals $J^+, J^-, K^+, K^-$, whose lengths are given by
  \eqref{eq:def-d}.  
  As in the proof above, we have that $s_1 \geq s_3$ and $s_2 \geq s_3$,
  and we may assume $\pi(u) \leq \pi(v)$.
  In the subcase
  (a'\oldstylenums0) $\pi(u) = \pi(v) = \frac12$, the subadditivity is verified
  by the same proof. 
  So we assume $\pi(u) < \frac12$, and thus $\phi(x)$ is given on $I'$ by
  Case \eqref{eq:i}.}
 
  If $\phi(y)$ is also given on $J'$ by Case \eqref{eq:i} and $\phi(x+y)$ is
  given on $K'$ by Case \eqref{eq:ii}, then for any $(x,y) \in F$, we have that
  $\phi(x) \geq \pi(x)$, $\phi(y) \geq \pi(y)$ and $\phi(x+y) \leq \pi(x+y)$. 
  It follows that $\Delta\phi(x,y) \geq \Delta\pi(x,y) \geq 0$. Therefore, the claim holds.
  
  If $\phi(y)$ is also given on $J'$ by Case \eqref{eq:i} and $\phi(x+y)$ is
  given on $K'$ by Case \eqref{eq:i}, 
  then $\bar x = u+|I^+|$, $\bar y = v+|J^+|$ and $\bar z = u+v+\frac1{mq}+|K^+|$.
  We distinguish two subcases (combinatorial types) as follows.
  
  \begin{enumerate}
  \item[(a'\oldstylenums1)] Assume that $\bar x + \bar y \leq \bar z$.
  If the vertex $(\bar x, \bar y) \in \intr(F)$, then $\bar x + \bar y
  >u+v+\frac{1}{mq}$, and we have  
  $\Delta\phi(\bar x, \bar y)  = |K^-| (s^+ - s^-) = \frac1{mq}(s^+ - s_3) \geq 0$ as $\bar x+\bar y \in K^+$.
  Both vertices $(\bar x, \bar z - \bar x)$ and $(\bar z -\bar y, \bar y)$ lie in $F$, 
  since $s_1 \geq s_3$ and  $s_2 \geq s_3$ imply that $|I^+|>|K^+|$ and $|J^+|>|K^+|$.
  Since $\bar x + \bar y \leq \bar z$, 
  we have  $\bar z - \bar y \in I^-$ and $\bar z -\bar x \in J^-$. 
  Thus, $\Delta\phi(\bar x, \bar z -\bar x) = \Delta\phi(\bar z - \bar y, \bar
  y) = (|I^+|+|J^+|-2|K^+|)(s^+ -s^-) = \frac{1}{mq}((s_1 - s_3) + (s_2 - s_3))\geq 0$.
  \item[(a'\oldstylenums2)] Assume that $\bar x + \bar y > \bar z$. All three vertices $(\bar x, \bar y)$, $(\bar x, \bar z-\bar x)$ and
  $(\bar z-\bar y, \bar y)$ lie in $F$. We have $\bar x+\bar y \in K^-$, $\bar z-\bar x  \in J^+$ and $\bar z-\bar y \in I^+$. 
  It follows that $\Delta\phi(\bar x, \bar y) = (|I^+|+|J^+|-2|K^+|)(s^+ -s^-) = \frac{1}{mq}((s_1-s_3)+(s_2 - s_3))\geq 0$ and $\Delta\phi(\bar x, \bar z -\bar x) = \Delta\phi(\bar z - \bar y, \bar y) = |K^-|(s^+ - s^-) = \frac{1}{mq}(s^+ - s_3) \geq 0$.
  \end{enumerate}

 Now assume that $\phi(y)$ is given on $J'$ by Case \eqref{eq:ii}. As in the proof of \autoref{claim:phi-subadd-F-dim-2}, if $\pi(u)=0$, then $\phi(x,y)\geq 0$ follows easily for all $(x,y)\in F$, and if $\pi(u)>0$, then $\phi(x+y)$ is given on $K'$ by Case \eqref{eq:ii}. Then $\bar x =u+|I^+|$, $\bar y =v +|J^-|$ and $\bar z = u+v+\frac{1}{mq}+|K^-|$. 
This is subcase (a'\oldstylenums3).
Since $s_2\geq s_3$,  we have $|J^-|\leq |K^-|$ by equation~\eqref{eq:def-d}. Also, $|I^+| \leq \frac{1}{mq}$, hence $\bar x + \bar y \leq \bar z$.   If the vertex $(\bar x, \bar y) \in \intr(F)$, then  $\bar x + \bar y >u+v+\frac{1}{mq}$. We have $\bar x+\bar y \in K^-$, and
  $\Delta\phi(\bar x, \bar y) = (|I^+|-|K^+|)(s^+ -s^-) = \frac{1}{mq}(s_1- s_3)\geq 0$.
  If the vertex $(\bar x, \bar z - \bar x) \in \intr(F)$, then $\bar x - u > \bar z - u  - v - \frac{1}{mq}$.  We have  $\bar z - \bar x \in J^+$, 
  and  $\Delta\phi(\bar x, \bar z -\bar x) = (2|K^-| - |J^-|)(s^+ -s^-) = \frac{1}{mq}[(s^+ - s_3) + (s_2-  s_3)]\geq 0$.
  The vertex $(\bar z - \bar y, \bar y) \not\in \intr(F)$, since $\bar z - \bar y \geq u+\frac{1}{mq}$.
\end{proof}

Now we turn to Cases (b) and (b'); see again \autoref{fig:phi-subadd-F-b-bprime}. 
\begin{figure}[t]
  \centering
  \includegraphics[width=.45\linewidth]{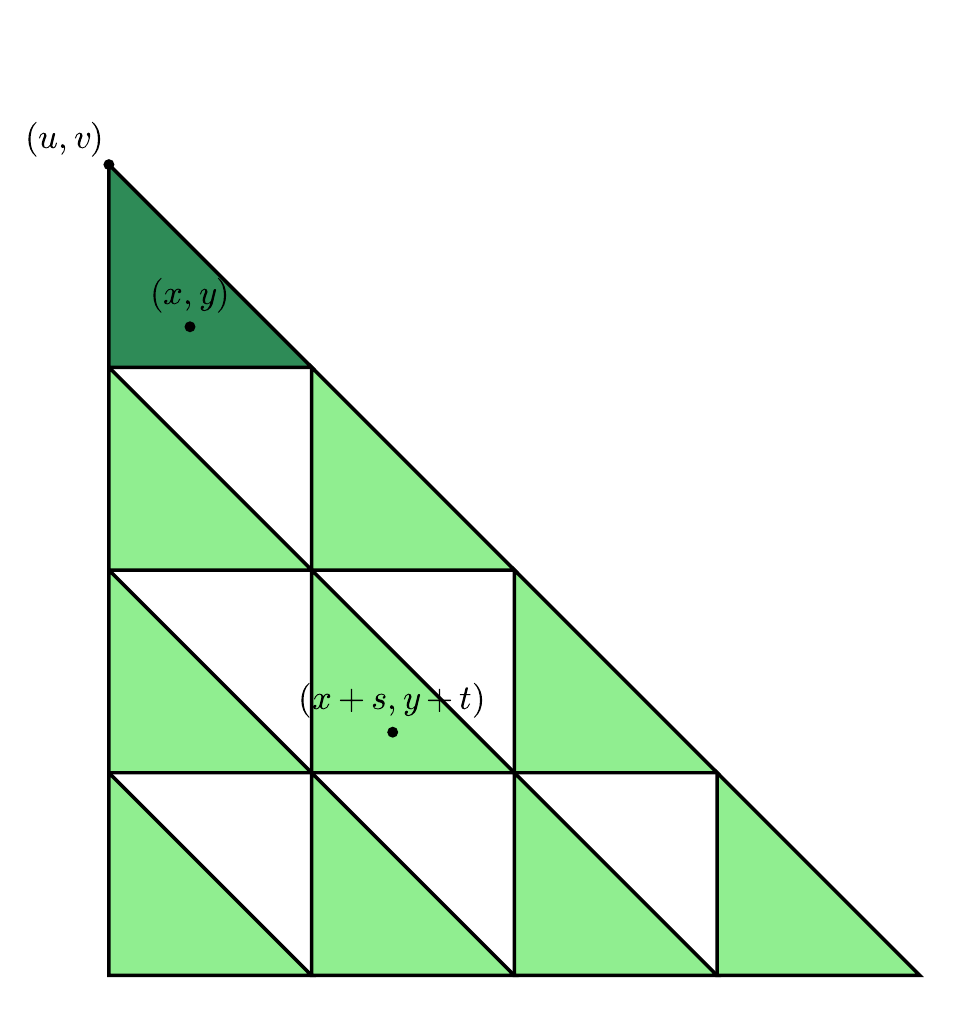}\hfill
  \includegraphics[width=.45\linewidth]{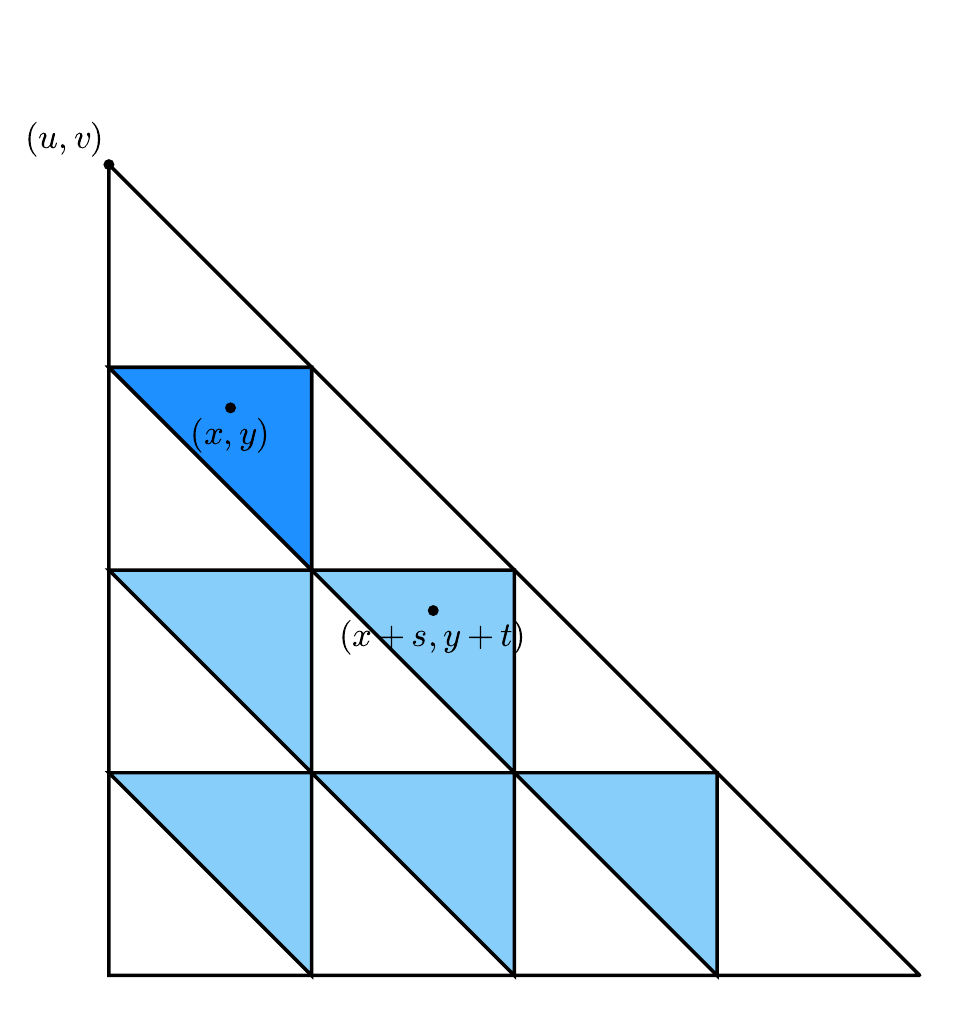}
  \caption{Cases (b) and (b') in the proof of \autoref{lemma:phi-subadditive}
    (Claims \ref{claim:phi-subadd-F-b} and \ref{claim:phi-subadd-F-b'}).
    The canonical triangle~$F$ is \emph{shaded dark}, its translations
    $F+(s,t)$  within
    the larger triangle $F'$ according
    to \autoref{obs:shifting-back-in-tangent-cone} are \emph{shaded lighter}.
  }
  \label{fig:phi-subadd-F-b-bprime}
\end{figure}

\begin{claim}
  \label{claim:phi-subadd-F-b}
  For the canonical lower triangle $F$ of Case (b), $\Delta\phi(x,y)\geq 0$ for $(x, y) \in F$.
\end{claim}

\begin{proof}
Again it suffices to prove the claim for $(x, y) \in \intr(F)$.  So we have 
\begin{equation}
\label{eq:case-b}
u < x < u +\tfrac{1}{mq}, \quad v-\tfrac{1}{mq} < y < v \quad  \text{ and  } \quad u+v-\tfrac{1}{mq}< x+y < u+v.
\end{equation}
From the additivity $\pi(u+v) = \pi(u) + \pi(v)$ and the subadditivity
$\pi(u+v) \leq \pi(u+\frac1{mq}) + \pi(v-\frac1{mq})$ 
%% and $\pi(u+v-\frac1{mq})
%% \leq \pi(u) + \pi(v-\frac1{mq})$ 
we have that $\pi'(x) = mq \bigl( \pi(u + \frac1{mq}) - \pi(u) \bigr)
\geq mq \bigl( \pi(v) - \pi(v - \frac1{mq}) \bigr) = 
\pi'(y)$ and likewise $\pi'(y) \leq \pi'(x+y)$. Therefore, 
 \begin{align*}
d^+(x)\geq d^+(y) \quad \text{ and }\quad d^+(y)\leq d^+(x+y); \\
d^-(x)\leq d^-(y) \quad \text{ and }\quad d^-(y)\geq d^-(x+y).
 \end{align*}
We distinguish the following four subcases:
\begin{enumerate}
\item[(b\oldstylenums1)] $\phi(x)$ and $\phi(y)$ are both obtained according to Case \eqref{eq:ii};
\item[(b\oldstylenums2)] $\phi(x)$ and $\phi(y)$ are both obtained according to Case \eqref{eq:i};
\item[(b\oldstylenums3)] $\phi(x)$ and $\phi(y)$ are obtained according to Case \eqref{eq:i} and Case \eqref{eq:ii}, resp.;
\item[(b\oldstylenums4)] $\phi(x)$ and $\phi(y)$ are obtained according to Case \eqref{eq:ii} and Case \eqref{eq:i}, resp.
\end{enumerate}
We first show the subcase (b\oldstylenums1) cannot happen. Suppose that $\phi(x)$ and $\phi(y)$ are both obtained according to Case \eqref{eq:ii}. Then, $\pi(x) \geq \frac{1}{2}$ and $\pi(y) \geq \frac{1}{2}$. It follows from \autoref{prop:affine-same-sign} that $\pi(u) \geq \frac{1}{2}$ and $\pi(v) \geq \frac{1}{2}$. Since $\pi(u)+\pi(v)=\pi(u+v)\leq 1$, we obtain that $\pi(u)=\pi(v)=\frac{1}{2}$. Hence, $\pi(u+v)=1$ and $\{u+v\}=f$. Let $x' = u+v-x$. Then, $x' \in (v- \frac{1}{mq}, v)$ by equation \eqref{eq:case-b}, and $\pi(x')=1-\pi(x)$ by the symmetry condition of the minimal function $\pi$. If $\pi(x)> \frac{1}{2}$, then $\pi(x')<\frac{1}{2}$. We obtain from \autoref{prop:affine-same-sign} that $\pi(y)<\frac{1}{2}$ as well, a contradiction to $\pi(y) \geq \frac{1}{2}$. Therefore, $\pi(x) =\frac{1}{2}$. Similarly, we can prove that $\pi(y)=\frac{1}{2}$. Since $\phi(x)$ and $\phi(y)$ are both obtained according to Case \eqref{eq:ii}, $x,y \not\in \frac{1}{rq}\Z$ and $\frac{f}{2}, \frac{f+1}{2} \in \frac{1}{rq}\Z$, we have $\{x\}, \{y\} \in (\frac{f}{2}, \frac{f+1}{2})$. As $u,v \in \frac{1}{rq}\Z$, this implies that $\{u\} \in [\frac{f}{2}, \frac{f+1}{2}-\frac{1}{rq}]$ and $\{v\} \in [\frac{f}{2}+\frac{1}{rq}, \frac{f+1}{2}]$, and hence $\{u\}+\{v\} \in [f+\frac{1}{rq}, f+1-\frac{1}{rq}]$.  This is a contradiction to $\{u+v\}=f$. We conclude that Case (b\oldstylenums1) is impossible. Next, we will show that $\Delta\phi(x,y)\geq 0$ holds in the other three subcases.

In the subcase (b\oldstylenums2), the slope of $\phi$ on the intervals
$\big(u, u+d^+(x)\big)$, $\big(u+d^+(x), u+d^+(x)+d^-(x)\big)$,
$\big(v-d^-(y)-d^+(y), v-d^-(y)\big)$ and $\big(v-d^-(y), v\big)$ are $s^+$,
$s^-$, $s^+$ and  $s^-$, respectively. If $u+v-x-y > d^-(y) \geq d^-(x+y)$, then $x-u < \frac{1}{mq} - d^-(y) = d^+(y) \leq d^+(x)$ and  $v-y > d^-(y)$. It follows that
\begin{align*}
\phi(x) &= \pi(u)+s^+\cdot (x-u), \\
\phi(y) &=\pi(v)-s^-\cdot d^-(y) - s^+\cdot \big(v-y-d^-(y)\big).
\end{align*}
The length of the subinterval of $[x+y, u+v]$ where $\phi$ has slope~$s^-$ 
  is bounded above by $d^-(x+y)$.  Hence, we have
\begin{align*}
\phi(x+y) \leq  \pi(u+v)-s^-\cdot d^-(x+y) - s^+\cdot
            \big(u+v-x-y-d^-(x+y)\big),
\end{align*}
which we estimate further to obtain
\begin{align*}
\phi(x+y) \leq \pi(u+v)-s^-\cdot d^-(y) - s^+\cdot \big(u+v-x-y-d^-(y)\big),
\end{align*}
and hence $\Delta\phi(x,y) \geq \Delta\pi(u,v)=0$.
Otherwise, $u+v-x-y \leq d^-(y)$. Then, 
\begin{align*}
\phi(x)+\phi(y) &\geq \pi(u)+\pi(v)+s^-\cdot (x+y-u-v), \\
\phi(x+y) &\leq \pi(u+v)+s^-\cdot (x+y-u-v).
\end{align*}
Hence, we also have that  $\Delta\phi(x,y)\geq \Delta\pi(u,v)= 0$.

In the subcase (b\oldstylenums3), we have $\pi(y) \geq \frac{1}{2}$, and hence $\pi(v) \geq \frac{1}{2}$ by \autoref{prop:affine-same-sign}. If $\pi(u)=0$, then $\{u\}=0$ and $\phi(x)= s^+ \cdot \{x\}$. Since $\phi(x+y) - \phi(y) \leq s^+ \cdot \{x\}$, we obtain that $\Delta\phi(x,y)\geq 0$. Now we assume that $\pi(u)>0$. Since $\Delta\pi(u,v)=0$, we have $\pi(u+v) = \pi(v)+\pi(u) >\pi(v) \geq \frac{1}{2}$. By \autoref{prop:affine-same-sign}, $\pi(x+y) >\frac{1}{2}$ as well. Hence, $\phi(x+y)$ is obtained according to Case \eqref{eq:ii}. We note that the function
$\phi$ has slope $s^+$ on the intervals $\big(v-d^+(y), v\big)$, $\big(u, u+d^+(x)\big)$ and $\big(u+v-d^+(x+y), u+v\big)$, and slope $s^-$ on the intervals $\big(v-\frac{1}{mq}, v-d^+(y)\big)$, $\big(u+d^+(x), u+\frac{1}{mq}\big)$ and $\big(u+v-\frac{1}{mq}, u+v-d^+(x+y)\big)$, with $d^+(y) \leq d^+(x+y)$ and $d^+(y) \leq d^+(x)$. 
Thus, 
\begin{align*}
\phi(x) - \phi(u) &\geq \phi(v)-\phi(u+v-x) \text{ and } \\
\phi(x+y)-\phi(u+v) &\leq \phi(x+y-u)-\phi(v).
\end{align*}
It follows from equation \eqref{eq:case-b} that $y \leq u+v-x \leq v$ and  $y \leq x+y-u \leq v$. Since $\phi(y)$ is obtained by equation \eqref{eq:ii}, we know that the function $\phi$ is convex on the interval $[y, v]$.  Notice that $y+v = (u+v-x)+(x+y-u)$. 
We obtain that 
\[\phi(y)+\phi(v) \geq \phi(u+v-x)+\phi(x+y-u).\] 
We also have that $\Delta\phi(u,v)=\Delta\pi(u,v)=0$.
Therefore,
\begin{align*}
\Delta\phi(x,y) &=\phi(x)+\phi(y)-\phi(x+y) \\
& \geq [\phi(u)+\phi(v)-\phi(u+v-x)]+\phi(y)-[\phi(u+v)+\phi(x+y-u)-\phi(v)] \\
&= [\phi(u)+\phi(v)-\phi(u+v)] + [\phi(y)+\phi(v)] - [\phi(u+v-x)+\phi(x+y-u)] \\
&=\Delta\pi(u,v) + [\phi(y)+\phi(v)] - [\phi(u+v-x)+\phi(x+y-u)] \\
&\geq 0.
\end{align*}

In the subcase (b\oldstylenums4), we have $\pi(x) \geq \frac{1}{2}$, and hence $\pi(u) \geq \frac{1}{2}$ by \autoref{prop:affine-same-sign}. If $\pi(v)=0$, then $\{v\}=0$ and  $\phi(y)= s^- \cdot (y-v)=s^- \cdot \big(\{y\}-1\big)$. Since $\phi(x+y) - \phi(x) \leq s^-\cdot \big(\{y\}-1\big)$, we obtain that $\Delta\phi(x,y)\geq 0$. Now we assume that $\pi(v)>0$. It follows from $\Delta\pi(u,v)=0$ that $\pi(u+v) = \pi(v)+\pi(u) >\pi(u) \geq \frac{1}{2}$. By \autoref{prop:affine-same-sign}, $\pi(x+y) >\frac{1}{2}$. Hence, $\phi(x+y)$ is obtained according to Case \eqref{eq:ii}. We note that the function 
$\phi$ has slope $s^-$ on the intervals $\big(v-d^-(y), v\big)$, $\big(u, u+d^-(x)\big)$ and $\big(u+v-d^+(x+y), u+v\big)$, and slope $s^+$ on the intervals $\big(v-\frac{1}{mq}, v-d^-(y)\big)$, $\big(u+d^-(x), u+\frac{1}{mq}\big)$ and $\big(u+v-\frac{1}{mq}, u+v-d^+(x+y)\big)$, with $d^-(y) \geq d^-(x+y)$ and $d^-(y) \geq d^-(x)$. 
Thus,
\begin{align*}
\phi(x) - \phi(u) &\geq \phi(v)-\phi(u+v-x)  \text{ and } \\
\phi(u+v)-\phi(x+y) &\geq \phi(v-\tfrac{1}{mq}+u+v-x-y)-\phi(v-\tfrac{1}{mq}).
\end{align*}
Since $\Delta\phi(u,v)=\Delta\pi(u,v)=0$, we obtain that
\begin{align*}
\Delta\phi(x,y) =\:& \phi(x)+\phi(y)-\phi(x+y) \\
\geq\:& \phi(u)+\phi(v)-\phi(u+v-x)+\phi(y) \\
 & -\phi(u+v)+\phi(v-\tfrac{1}{mq}+u+v-x-y)-\phi(v-\tfrac{1}{mq}) \\
 =\:& \phi(y) +\phi(v-\tfrac{1}{mq}+u+v-x-y)-\phi(v-\tfrac{1}{mq})-\phi(u+v-x).
\end{align*}
Notice that $y + (v-\tfrac{1}{mq}+u+v-x-y) = (v-\tfrac{1}{mq})+(u+v-x)$ and $v-\tfrac{1}{mq} \leq y \leq u+v-x \leq v$ by equation \eqref{eq:case-b}. Since $\phi(y)$ is obtained by equation \eqref{eq:i}, we know that the function $\phi$ is concave on the interval $[v-\tfrac{1}{mq}, v]$. Therefore, $\Delta\phi(x,y)\geq 0$.

We proved that $\Delta\phi(x,y)\geq 0$ in Case (b).%%  \smallbreak
%% \textbf{Cases (d) and (e)} can be transformed by the map $(x, y) \mapsto -(x,
%% y)$ to Cases (a) and (b), respectively, since $\pi$ being a minimal valid
%% function for $R_{f}(\R/\Z)$ implies that the function $x \mapsto \pi(-x)$ is
%% minimal valid for $R_{1-f}(\R/\Z)$. \smallbreak
%% \textbf{Cases (c) and (f)} can be reduced to Cases (b) and (e), respectively, since $\Delta\phi$ is invariant under the map $(x,y) \mapsto (y,x)$. 
\end{proof}

It remains to prove the following for Case (b').

\begin{claim}\label{claim:phi-subadd-F-b'}
  Let $F$ be the canonical upper triangle of Case (b').  Then
  $\Delta\phi \geq 0$ on all vertices of the complex $\Delta\P^\phi$ that lie
  in $\intr(F)$.
\end{claim}
\begin{proof}
  We have $F = F(I,J,K)$ with  $I = [u, u+\frac1{mq}]$,
  $J=[v-\frac2{mq},v-\frac1{mq}]$, and $K=[u+v-\frac1{mq}, u+v]$.
  Let $s_1, s_2$ and $s_3$ denote the slopes of $\pi$ on the intervals $I, J$
  and $K$, respectively. 
  As in the proof above, we have that $s_2 \leq s_1$ and $s_2 \leq s_3$.
  Let $\bx$, $\by$, and $\bz$ denote the unique breakpoints of $\phi$ in $\intr(I)$,
  $\intr(J)$, and $\intr(K)$, respectively. 
  Then the only vertices of
  $\Delta\P^\phi$ that can lie in $\intr(F)$ are 
  $(\bx, \by)$,       %% P12
  $(\bx, \bz-\bx)$, and %% P13
  $(\bz-\by, \by)$.     %% P23
  The breakpoint $\bx$ splits $I$ into an interval $I^+$, on which
  $\phi$ has slope $s^+$, and $I^-$, on which $\phi$ has slope $s^-$. Likewise, we
  obtain intervals $J^+, J^-, K^+, K^-$, whose lengths are given by
  \eqref{eq:def-d}.  
  
  We distinguish the same subcases as in Case (b). 

  Subcase (b'\oldstylenums1) cannot happen, using the same proof.

  In subcase (b'\oldstylenums2), $\phi(x)$ and $\phi(y)$ are both obtained
  by \eqref{eq:i}.  If $\phi(x+y)$ is obtained by \eqref{eq:ii}, then
  trivially $\Delta\phi(x,y) \geq \Delta\pi(x,y) \geq 0$ for all $(x,y) \in F$. 
  So we will assume that $\phi(x+y)$ is obtained by \eqref{eq:i}. 
  %% MMM
  
  The vertex $(\bz-\by, \by)$ does not lie in~$\intr(F)$ because $\bz-\by \geq u +
  \frac1{mq}$. 
  
  If the vertex $(\bx, \bz-\bx)$ lies in $\intr(F)$, then $\bz - \bx < v -
  \frac1{mq}$. Also, $\bz-\bx \geq \by$, and thus 
  $\bz-\bx \in J^-$.  %%  lies in the negative slope %% of $\phi$ on $J$. 
  Then a calculation shows $\Delta\phi(\bx, \bz-\bx) \geq 0$. 

  If the vertex $(\bx, \by)$ lies in $\intr(F)$, then $\bx + \by > u + v -
  \frac1{mq}$.  Also $\bx + \by < \bz$, and thus $\bx + \by \in J^+$. 
  Then a calculation shows $\Delta\phi(\bx, \by) \geq 0$. 

  In subcase (b'\oldstylenums3), $\phi(x)$ is obtained by  \eqref{eq:i},
  and $\phi(y)$ is obtained by  \eqref{eq:ii}. 
  As in (b\oldstylenums3), if $\pi(u) = 0$, $\Delta\phi(x,y)\geq0$ follows
  easily for all $(x,y)\in F$, and if $\pi(u) > 0$, then $\phi(x+y)$ is obtained using
  \eqref{eq:ii}.
  %% MWW
  Then the vertices $(\bx, \by)$ and $(\bz-\by, \by)$ lie in~$F$. 
  There are two combinatorial types. 

  In Type I, we have $\bz - \by > \bx$.
  Then $\Delta\phi(\bx,\by) = \frac1{mq} ((s_1 - s_2) + (s_3 - s_2)) \geq 0$ and 
  $\Delta\phi(\bz-\by, \by) = \frac1{mq} (s^+ - s_2) \geq 0$. 
  Finally, if the vertex $(\bx, \bz-\bx)$ lies in $\intr(F)$, then $\bz - \bx < v -
  \frac1{mq}$ and $\bz - \bx \in J^+$, and then $\Delta\phi(\bx,\bz-\bx) =
  \frac1{mq} (s^+ - s_2) \geq 0$. 

  In Type II, we have $\bz - \by \leq \bx$.
  Then all three vertices lie in~$F$.  We have $\bx + \by \in K^+$ and
  $\Delta\phi(\bx, \by) = \frac1{mq} (s^+ - s_2) \geq 0$.
  We have $\bz-\bx \in J^-$ and 
  $\Delta\phi(\bx, \bz-\bx) = \frac1{mq} ((s_1 - s_2) + (s_3 - s_2)) \geq 0$.
  We have $\bz-\by \in I^+$ and $\Delta\phi(\bz-\by, \by) = \frac1{mq} (s^+ - s_2) \geq 0$.
  
  In subcase (b'\oldstylenums4), $\phi(x)$ is obtained by \eqref{eq:ii}
  and $\phi(y)$ is obtained by  \eqref{eq:i}. 
  As in (b\oldstylenums4), if $\pi(v) = 0$, $\Delta\phi(x,y)\geq0$ follows
  easily, and if $\pi(v) > 0$, then $\phi(x+y)$ is obtained using
  \eqref{eq:ii}.
  %% WMW

  The vertex $(\bx, \by)$ has $\bx + \by \leq u + v - \frac1{mq}$, and so $(\bx,\by)
  \notin \intr(F)$. 

  If the vertex $(\bz-\by, \by)$ lies in $\intr(F)$, then $\bz - \by \in I^+$ and 
  $\Delta\phi(\bz-\by, \by) =  \frac1{mq} ((s_1 - s_2) + (s^+ - s_2)) \geq 0$. 

  If the vertex $(\bx, \bz-\bx)$ lies in $\intr(F)$, then $\bz - \bx \in J^-$ and 
  $\Delta\phi(\bx, \bz-\bx) = \frac1{mq} (s_3 - s_2) \geq 0$. 
\end{proof}

From Claims \ref{claim:phi-subadd-F-strictly-subadd} and
\ref{claim:phi-subadd-F-dim-2}--\ref{claim:phi-subadd-F-b'}, we conclude that
$\phi$ is a subadditive function.  This completes the proof of
\autoref{lemma:phi-subadditive}.
\end{proof}

\begin{theorem}
\label{thm:phi-extreme}
The function $\phi$ defined above is an extreme function for $R_f(\R/\Z)$.
\end{theorem}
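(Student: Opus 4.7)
The plan is to observe that the bulk of the work has been completed in the preceding lemmas, so that the theorem reduces to an application of Gomory--Johnson's Two Slope Theorem \cite{infinite}.

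First I would verify that $\phi$ is a minimal valid function for $R_f(\R/\Z)$, i.e., that it satisfies conditions \eqref{eq:minimal:nonneg}--\eqref{eq:minimal:symm}. Non-negativity comes from \autoref{lemma:phi-non-negative}. The normalization $\phi(0)=0$ is \autoref{lemma:phi-prop}(2), and $\phi(f)=1$ follows by evaluating the symmetry condition of \autoref{lemma:phi-prop}(3) at $x=0$. Subadditivity is \autoref{lemma:phi-subadditive}, and the symmetry condition itself is \autoref{lemma:phi-prop}(3). Together these show that $\phi$ is minimal.

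Next I would invoke the Two Slope Theorem of Gomory and Johnson: a continuous piecewise linear minimal valid function for $R_f(\R/\Z)$ whose derivative takes only two distinct values is automatically extreme. By \autoref{lemma:phi-prop}(5), the function $\phi$ is continuous piecewise linear with exactly the two slope values $s^+$ and $s^-$; these are distinct because a $\Z$-periodic piecewise linear function with a single slope cannot simultaneously satisfy $\phi(0) = 0$ and $\phi(f) = 1$. Hence the Two Slope Theorem applies and $\phi$ is extreme for $R_f(\R/\Z)$.

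There is essentially no further obstacle to the statement itself; the real difficulty was already discharged in \autoref{lemma:phi-subadditive}, whose intricate case analysis over the canonical triangles of Cases (a), (a'), (b), (b') established subadditivity. Once subadditivity and the other minimality conditions are in hand, extremality is almost immediate from the two-slope structure of $\phi$.
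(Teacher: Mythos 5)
Your proposal is correct and follows essentially the same route as the paper: collect minimality from Lemmas~\ref{lemma:phi-prop}, \ref{lemma:phi-non-negative}, and \ref{lemma:phi-subadditive}, then apply Gomory--Johnson's Two Slope Theorem to the two-slope minimal function~$\phi$. You have merely spelled out the individual minimality conditions and the distinctness of $s^+$ and $s^-$ a bit more explicitly than the paper does.
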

\begin{proof}
It follows from \autoref{lemma:phi-prop}, \autoref{lemma:phi-non-negative} and \autoref{lemma:phi-subadditive} that $\phi$ is a minimal valid function that has two slopes. Therefore, by Gomory--Johnson's Two Slope Theorem \cite{infinite}, $\phi$ is an extreme function.
\end{proof}

\begin{proof}[Proof of \autoref{thm:approx-theorem}]
  The theorem follows from \autoref{thm:phi-approximates},
  \autoref{lemma:phi-prop}, and \autoref{thm:phi-extreme}.
\end{proof}

\section{Examples}
\label{s:examples}

Figures~\ref{fig:phi-and-pi-sym} and \ref{fig:phi-and-pi-sym-2} show the graphs of the functions $\phi =
\sage{injective\_2\_slope\_fill\_in}(\pi)$ constructed in the present paper (left)
and $\pi_{\text{sym}} = \sage{symmetric\_2\_slope\_fill\_in}(\pi)$ constructed in \cite{bhm:dense-2-slope} (right). 

Both approximation procedures are implemented in the latest version of our software
package \textsf{cutgeneratingfunctionology}
\cite{cutgeneratingfunctionology:lastest-release}, a version of which is
described in \cite{hong-koeppe-zhou:software-paper}.  The reader
is invited to try more examples.

\section{Proofs of corollaries}
\label{s:proofs-corollaries}

\begin{proof}[Proof of \autoref{th:infinite-master-for-facets}]
  % It is a corollary of in combination with \autoref{thm:gomory-master}. 
  Let $\pi|_P$ be a facet of the finite group problem.  By Gomory's master
  theorem for facets, \autoref{thm:gomory-master}, there exists an extension
  $\pi|_G$ of $\pi|_P$ that is a facet of the finite master group problem $R_f(G)$.
  Because $\pi|_G$ is an extreme point of the polyhedron described by the
  rational inequality system \eqref{eq:minimal}, it takes rational values on
  $G$.  By our injective approximation theorem, \autoref{thm:approx-theorem},
  applied to $\pi = \sagefunc{interpolate_to_infinite_group}(\pi|_G)$ and an
  arbitrary $\epsilon>0$, there exists an extension $\phi$ of $\pi|_G$ that is
  extreme for $R_f(\R/\Z)$.  Then $\phi|_P = \pi|_P$.
\end{proof}
\smallbreak

\begin{proof}[Proof of \autoref{cor:non-extreme-limit}]
  The corollary follows from \autoref{thm:approx-theorem} by choosing a
  sequence $\{ \epsilon_i \}$ converging to $0$ and a sequence of integers
  $m_i \geq m_\pi(\epsilon_i)$ such that $\lcm(1, \dots, i) \mid m_i $.  Let
  $\phi_i$ be the function from \autoref{thm:approx-theorem} for parameters
  $m_i$ and $\epsilon_i$. Then for every rational number $x = a/b$, for all
  $i \geq b$ we have $x \in \frac1{m_iq}\Z$ and hence $\phi_i(a/b) = \pi(a/b)$
  by \autoref{thm:approx-theorem}, property~(iv).
\end{proof}

\section*{Acknowledgments}

The authors wish to thank Joseph Paat for discussions on related topics
and the anonymous referees for their helpful comments.

\appendix

\section{Example: Restrictions of extreme functions to subgroups are not
  necessarily extreme}
\label{s:restriction-extreme-not-extreme}

As noted in the introduction,  restrictions of continuous piecewise linear extreme
functions with breakpoints on $\frac{1}{mq}\Z$ to the finite group problem on
$\frac{1}{mq}\Z$ are necessarily extreme, but in general the coarser
restrictions to the finite group problems on $\frac{1}{q}\Z$ are \emph{not} extreme. We
give a counterexample for this in \autoref{fig:counterexample-to-survey-thm-82}.

{\small
\begin{verbatim}
sage: h = gj_2_slope(1/2,1/3)
sage: h24 = restrict_to_finite_group(h, order=24)
sage: h8 = restrict_to_finite_group(h, order=8)
sage: extremality_test(h24)
True
sage: extremality_test(h8)
False
\end{verbatim}
}

\begin{figure}[h]
\includegraphics[width=.48\textwidth]{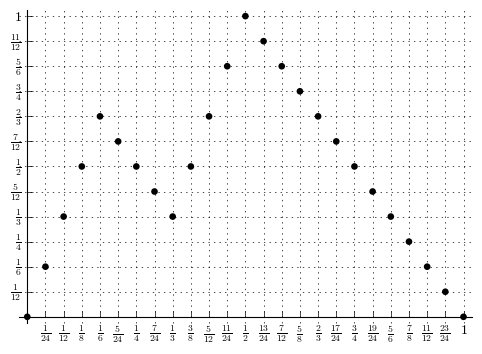}
\includegraphics[width=.48\textwidth]{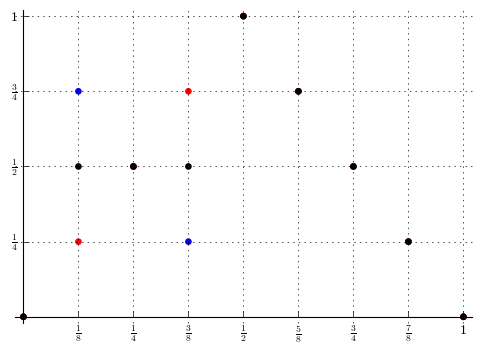}
\caption{The function $\pi$ is from a family of 2-slope extreme functions discovered by Gomory--Johnson. It is continuous piecewise linear with breakpoints $0, \frac{1}{6}, \frac{1}{3}, \frac{1}{2}$ and $1$, which all belong to $\frac{1}{24}\Z$. The function restricted to $\frac{1}{24}\Z$ (left) is an extreme function for the finite group problem on $\frac{1}{24}\Z$. However, the coarser
restriction to the finite group problems on $\frac{1}{8}\Z$ (right) is not extreme, as it is the convex combination of the  two minimal valid functions shown in \emph{blue} and \emph{red}.
}
\label{fig:counterexample-to-survey-thm-82}
\end{figure}

\clearpage
\begin{figure}[tp]
\includegraphics[width=.38\textwidth]{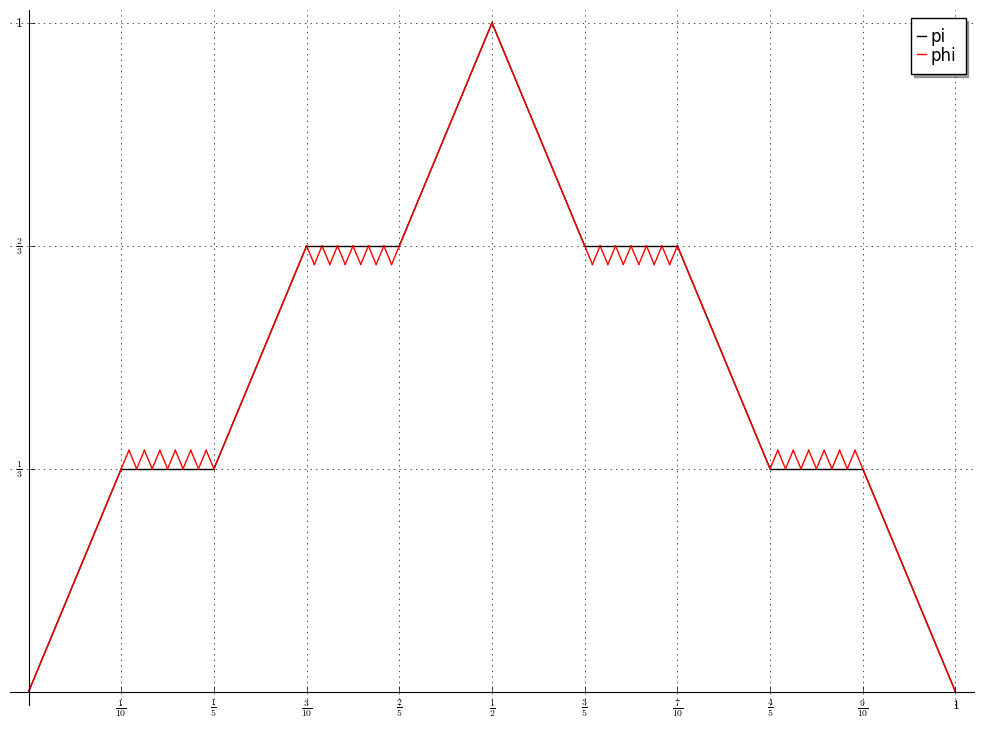}
\includegraphics[width=.38\textwidth]{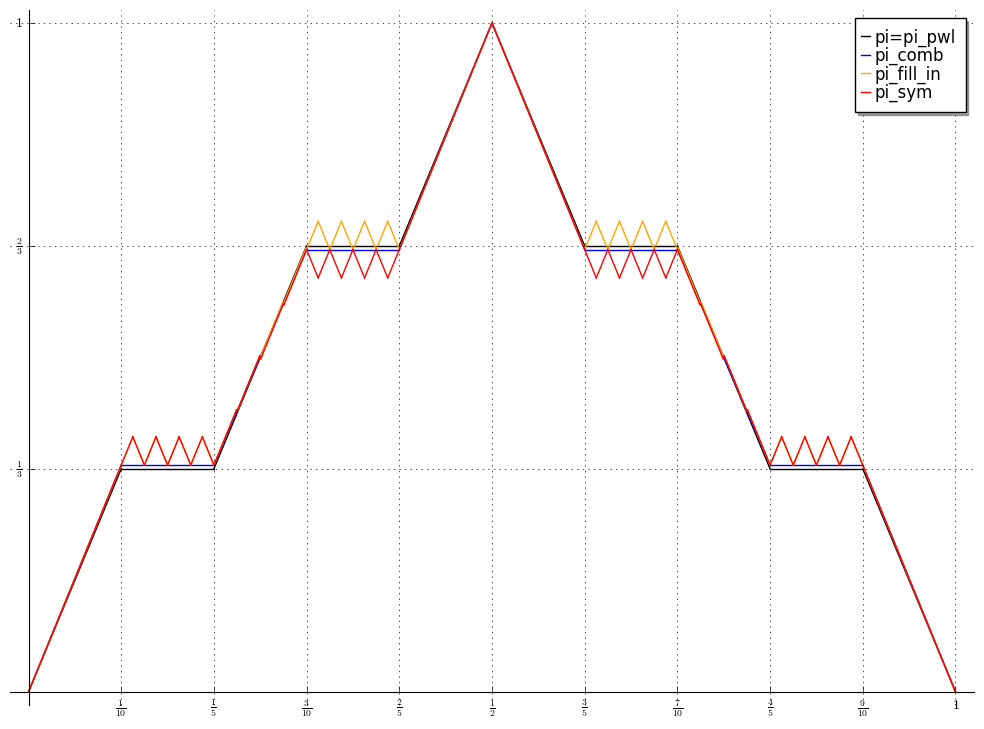}
\includegraphics[width=.38\textwidth]{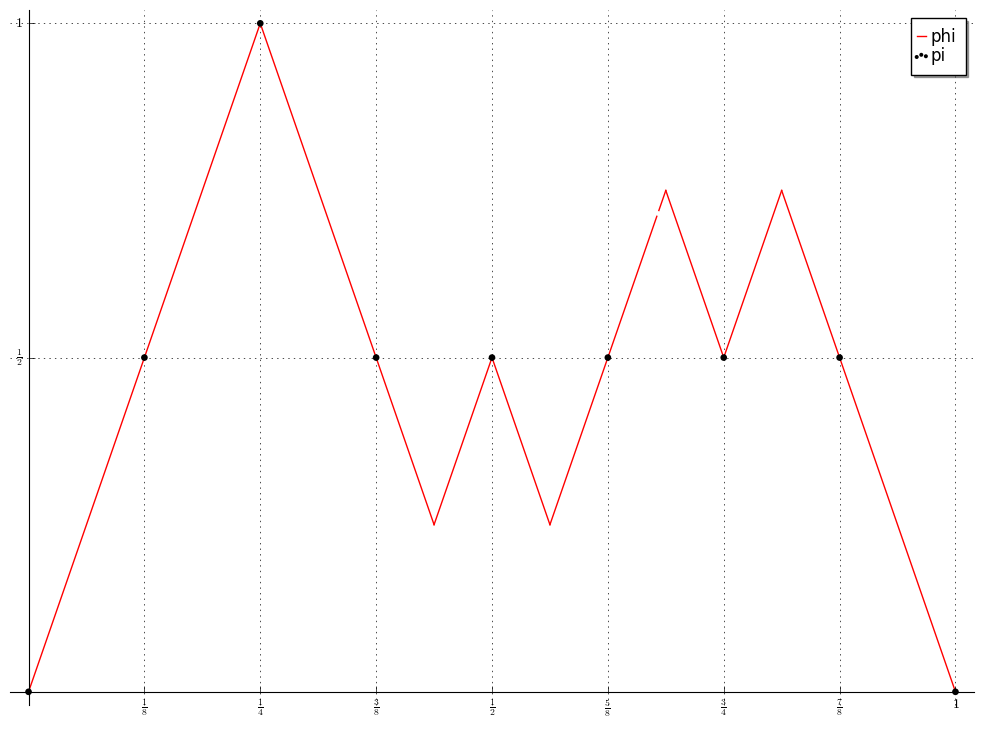}
\includegraphics[width=.38\textwidth]{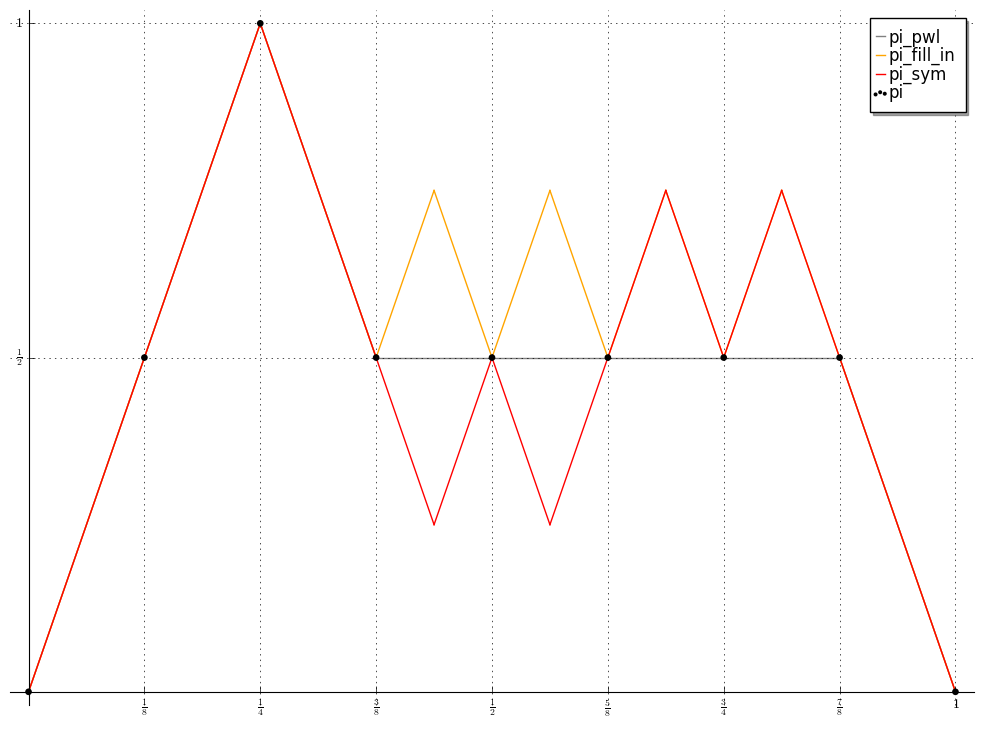}
\includegraphics[width=.38\textwidth]{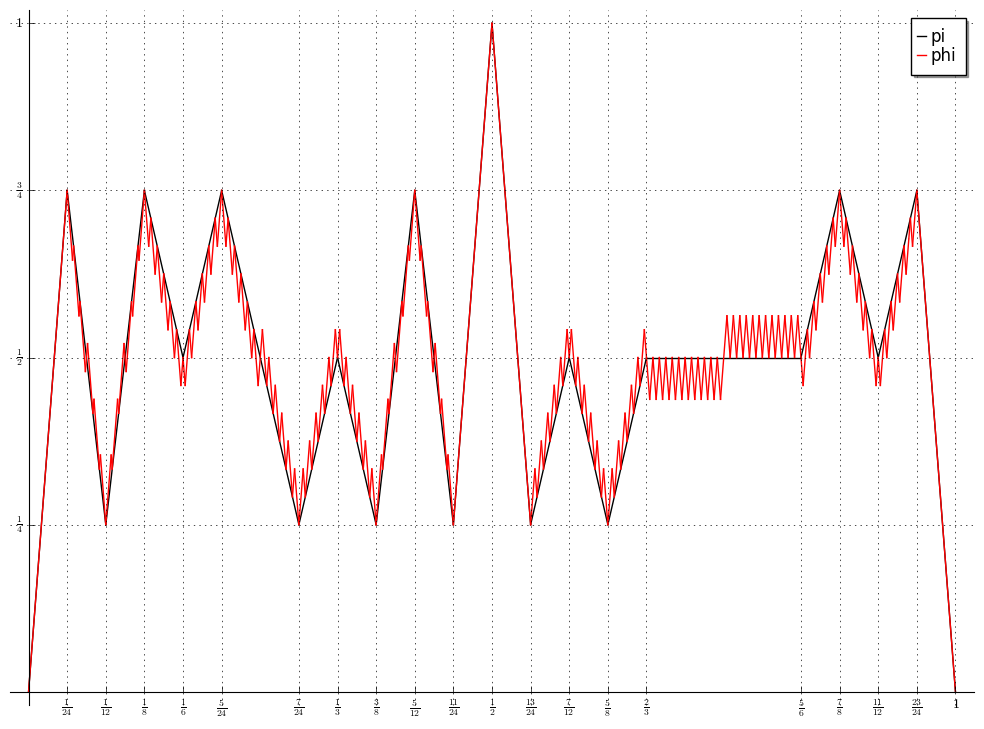}
\includegraphics[width=.38\textwidth]{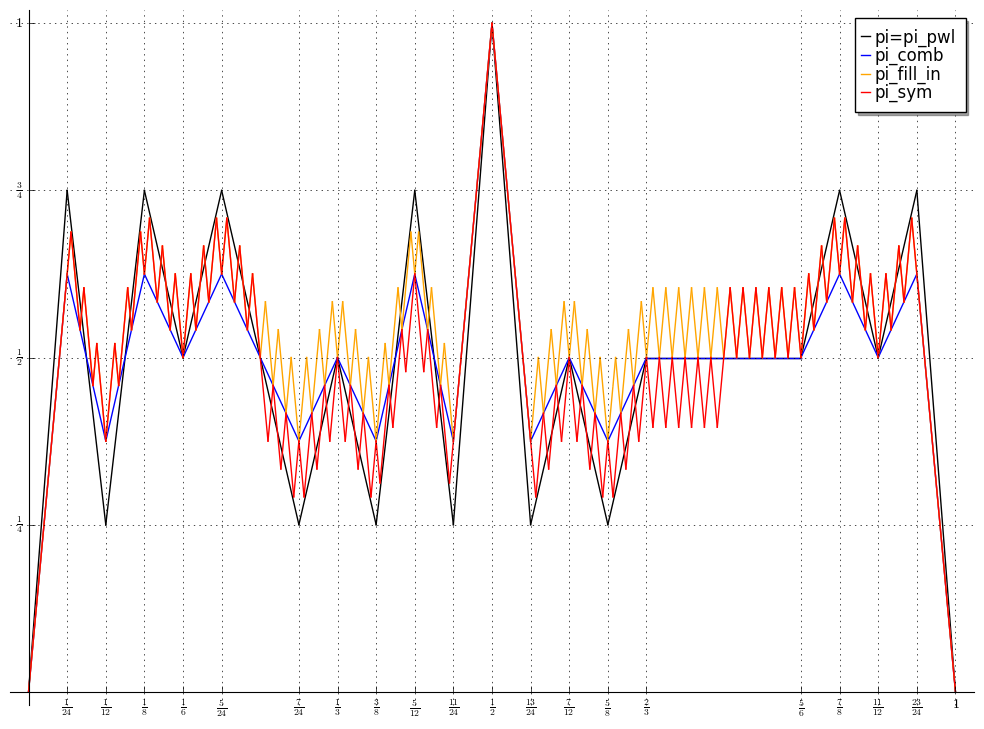}
\includegraphics[width=.38\textwidth]{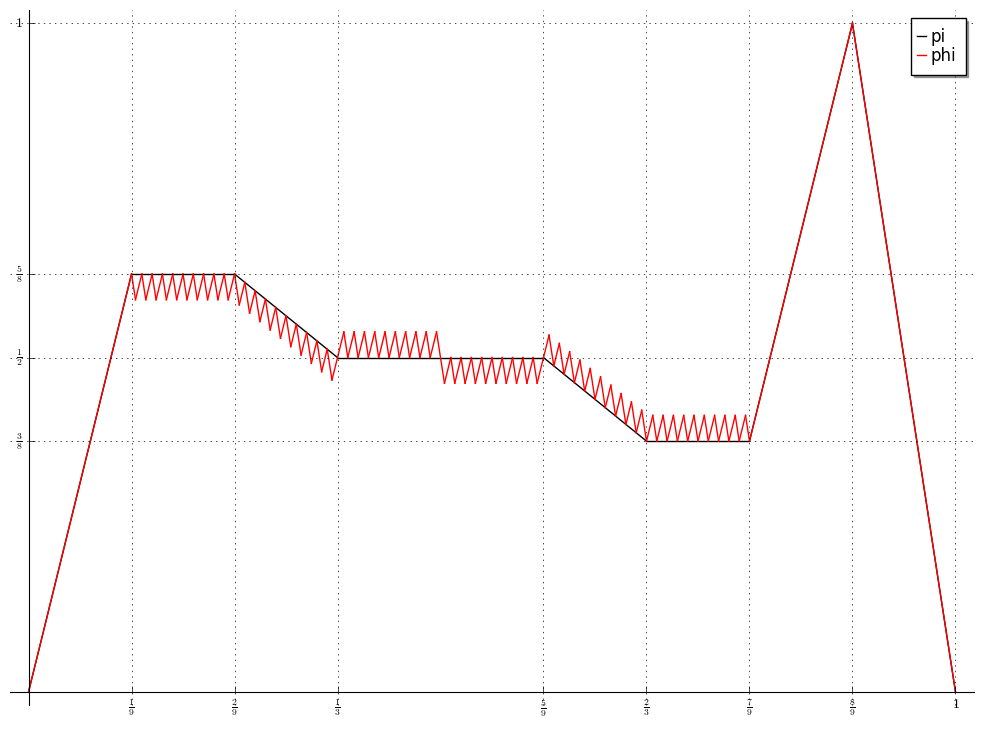}
\includegraphics[width=.38\textwidth]{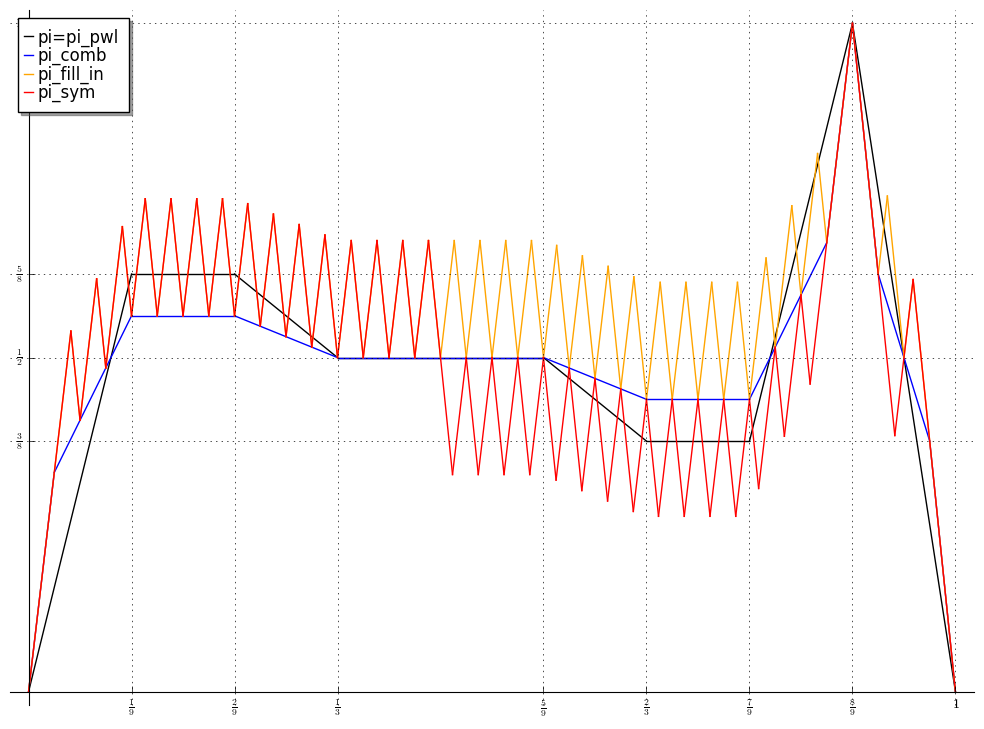}
\includegraphics[width=.38\textwidth]{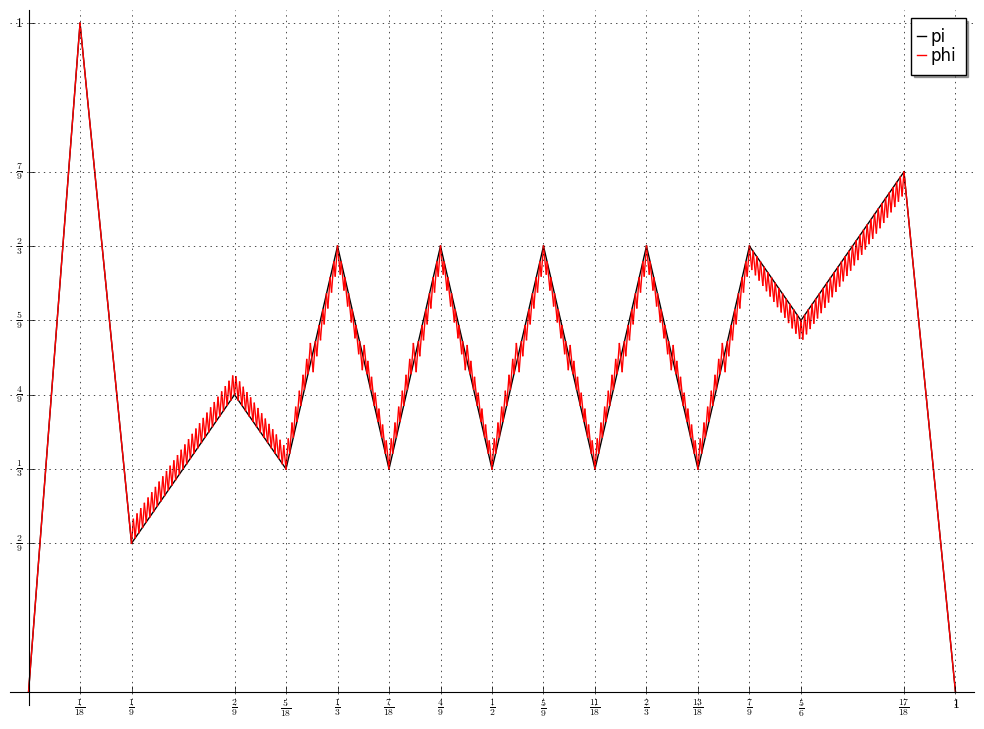}
\includegraphics[width=.38\textwidth]{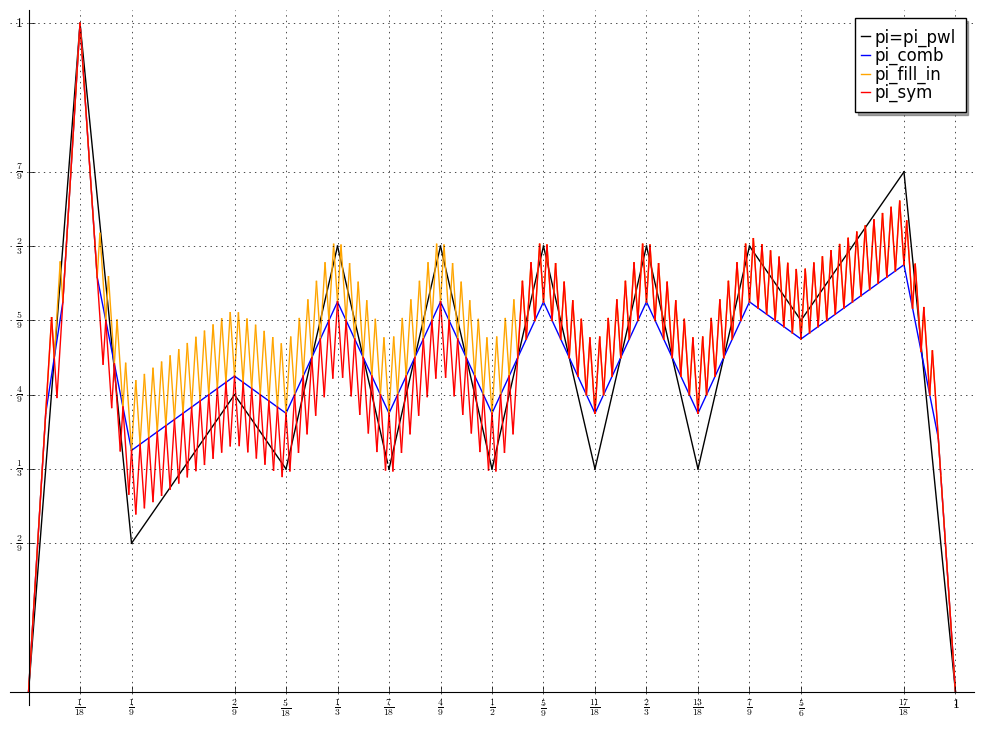}
\caption{Graphs of the approximating extreme functions (left)
  $\phi=\sage{injective\_2\_slope\_fill\_in}(\pi)$ from the present paper
  and (right)
  $\pi_{\text{sym}}=\sage{symmetric\underscore 2\underscore slope\underscore
    fill\underscore in}(\pi)$ from Basu--Hildebrand--Molinaro
  for several examples~$\pi$.  As noted in the introduction, in contrast to
  our approximation $\phi$, Basu et al.'s approximation
  $\pi_{\text{sym}}$ does not preserve the function values on the group
  $\frac{1}{q}\Z/\Z$.}
\label{fig:phi-and-pi-sym-2}
\end{figure}

\clearpage

%{\small
\providecommand\ISBN{ISBN }
\bibliographystyle{../amsabbrvurl}
\bibliography{../bib/MLFCB_bib}
%}
\end{document}
%%% Local Variables:
%%% mode: latex
%%% TeX-master: t
%%% End: